\documentclass[reqno]{amsart}
\usepackage{graphicx,amsmath,amssymb,cite}
\usepackage{enumitem}
\usepackage[dvips]{epsfig}
\usepackage{bm}
\numberwithin{figure}{section}
\usepackage{tikz}

\newtheorem{theorem}{Theorem}[section]
\newtheorem{lemma}{Lemma}[section]
\newtheorem{proposition}[lemma]{Proposition}

\newtheorem{definition}[lemma]{Definition}
\newtheorem{remark}[lemma]{Remark}
\newtheorem{problem}{Problem}[section]
\newtheorem*{convention}{Convention}

\numberwithin{equation}{section}

\begin{document}

\title[Contact discontinuities with nonzero swirl]{Contact discontinuities for 3-D axisymmetric flows with nonzero swirl}

\author{Myoungjean Bae}
\address{Department of Mathematical Sciences, KAIST, 291 Daehak-Ro, Yuseong-Gu, Daejeon 34141, Republic of Korea}
\email{mjbae@kaist.ac.kr}

\author{Jong-Seo Yoon}
\address{Department of Mathematical Sciences, KAIST, 291 Daehak-Ro, Yuseong-Gu, Daejeon 34141, Republic of Korea}
\email{newyjsk@kaist.ac.kr}

\keywords{angular momentum density, asymptotic state, axisymmetric, contact discontinuity, free boundary problem, Helmholtz decomposition, infinite cylinder, nonzero swirl, steady Euler system, subsonic, vorticity}

\subjclass[2020]{35J47, 35J57, 35J66, 35Q31, 35R35, 74J40, 76N10}

\date{\today}

\begin{abstract}
In this paper, we prove the existence and far-field behavior of an axisymmetric solution $(\rho,\mathbf{u},p)$ to the steady Euler system in an infinitely long cylindrical nozzle with a perturbed wall $r=R(x_1)$, containing a contact discontinuity $r=g_D(x_1)$. The background flow carries a nonzero swirl component $\beta_0\mathbf{e}_\theta$, where $\beta_0$ is not necessarily small. Consequently, the background entropy, Bernoulli function, and angular momentum density depend on the radial variable, and their radial derivatives are not necessarily small.
\end{abstract}
\maketitle

\section{Introduction}
Steady inviscid compressible flow of ideal polytropic gas in $\mathbb{R}^3$ is governed by the {\emph{steady Euler system}}
\cite{courant1999supersonic}:
\begin{equation}\label{Euler-system-B}
\left\{
\begin{aligned}
&\operatorname{div}\left(\rho\mathbf{u}\right)=0,\\
&\operatorname{div}\left(\rho\mathbf{u}\otimes\mathbf{u}\right)+\nabla p=\mathbf{0},\\
&\operatorname{div}\left(\rho\mathbf{u}B\right)=0, 
\end{aligned}
\right.
\end{equation}
where $\rho=\rho(\mathbf{x})$, $\mathbf{u}=(u_1, u_2,u_3)(\mathbf{x})$, $p=p(\mathbf{x})$, and $B=B(\mathbf{x})$ denote the density, velocity, pressure, and the Bernoulli function of the flow, respectively, at $\mathbf{x}=(x_1, x_2, x_3)\in \mathbb{R}^3$. For a constant $\gamma>1$ called the \emph{adiabatic exponent}, $B$ is defined by
\begin{equation}\label{Ber-inv}
B=\frac{1}{2}|\mathbf{u}|^2+\frac{\gamma p}{(\gamma-1)\rho}=\frac{1}{2}|\mathbf{u}|^2+\frac{\gamma}{\gamma-1}S\rho^{\gamma-1}.
\end{equation}
Here, $S=p/\rho^{\gamma}$ denotes the entropy.

Let $\Omega\subset\mathbb{R}^3$ be an open connected set, and let $\Gamma$ be a non-self-intersecting $C^1$ surface dividing $\Omega$ into two disjoint open subsets $\Omega^{\pm}$ such that $\Omega=\Omega^-\cup\Gamma\cup\Omega^+$.
\begin{definition} \label{def-wsol}
We define $\mathbf{U}=(\rho,\mathbf{u},p)\in[L^{\infty}_{\mathrm{loc}}(\Omega)\cap C^1_{\mathrm{loc}}(\Omega^{\pm})\cap C^0_{\mathrm{loc}}(\Omega^{\pm}\cup\Gamma)]^5$ to be a \emph{weak solution} to the Euler system \eqref{Euler-system-B} in $\Omega$ if the following properties are satisfied:
For any test function $\xi\in C_0^{\infty}(\Omega)$ and $j=1,2,3$,
\begin{equation*}
\int_{\Omega}\rho \mathbf{u}\cdot\nabla\xi\, d\mathbf{x}=\int_{\Omega}(\rho u_j\mathbf{u}+p\mathbf{e}_j)\cdot\nabla\xi\, d\mathbf{x}=\int_{\Omega}\rho \mathbf{u} B\cdot\nabla\xi\, d\mathbf{x}=0,
\end{equation*}
where each $\mathbf{e}_j$ is the unit vector in the $x_j$-direction.
\end{definition}

By integration by parts, one can check that $\mathbf{U}$ is a weak solution to \eqref{Euler-system-B} in $\Omega$ if and only if $\mathbf{U}$ is a classical solution to \eqref{Euler-system-B} in $\Omega^\pm$ and $\mathbf{U}$ satisfies the Rankine--Hugoniot conditions:
\begin{align}
[\rho\mathbf{u}\cdot\mathbf{n}]_{\Gamma}=[\rho\mathbf{u}\cdot\mathbf{n}B]_{\Gamma}&=0,\nonumber\\
\label{R-H-2}[\rho(\mathbf{u}\cdot\mathbf{n})\mathbf{u}+p\mathbf{n}]_{\Gamma}&=\mathbf{0},
\end{align}
for a unit normal vector field $\mathbf{n}$ on $\Gamma$, where $[F]_{\Gamma}$ is defined by
$$[F(\mathbf{x})]_{\Gamma}:=\left.F(\mathbf{x})\right|_{\overline{\Omega^-}}-\left.F(\mathbf{x})\right|_{\overline{\Omega^+}}\quad\mbox{for}\quad \mathbf{x}\in\Gamma.$$

Let $\boldsymbol{\tau}_1$ and $\boldsymbol{\tau}_2$ be tangent vector fields on $\Gamma$ such that they are linearly independent at each point on $\Gamma$.
Taking the inner product of \eqref{R-H-2} with $\mathbf{n}$ and $\boldsymbol{\tau}_k$, respectively, yields
\begin{equation}\label{R-H-3}
[\rho(\mathbf{u}\cdot\mathbf{n})^2+p]_{\Gamma}=0,\quad\rho(\mathbf{u}\cdot\mathbf{n})[\mathbf{u}\cdot\boldsymbol{\tau}_k]_{\Gamma}=0\quad\text{for $k=1,2$.}
\end{equation}

Assume that $\rho>0$ in $\Omega$. Then, the second condition in \eqref{R-H-3} holds if either $\mathbf{u}\cdot\mathbf{n}=0$ holds on $\Gamma$, or $[\mathbf{u}\cdot\boldsymbol{\tau}_k]_{\Gamma}=0$ for all $k=1,2$. If $\mathbf{u}\cdot\mathbf{n}\ne 0$ and $[\mathbf{u}\cdot\boldsymbol{\tau}_k]_{\Gamma}=0$ hold on $\Gamma$, then the surface $\Gamma$ is called a shock. If $\mathbf{u}\cdot \mathbf{n}=0$ and $[\mathbf{u}\cdot\boldsymbol{\tau}_k]_{\Gamma}\ne 0$, then $\Gamma$ is called a contact discontinuity. For shocks, see \cite{park2020shock,park2025shock} and the references therein. For contact discontinuities, $\mathbf{u}\cdot \mathbf{n}=0$ and the first equation in \eqref{R-H-3} give $[p]_\Gamma=0$. Then we get the Rankine--Hugoniot conditions corresponding to a contact discontinuity as follows: $$\mathbf{u}\cdot\mathbf{n}=0 \quad \text{on } \Gamma, \quad [p]_\Gamma=0.$$

\begin{definition}
\label{definition-wsol}
We define $\mathbf{U}=(\rho,\mathbf{u},p)\in[L^{\infty}_{\mathrm{loc}}(\Omega)\cap C^1_{\mathrm{loc}}(\Omega^{\pm})\cap C^0_{\mathrm{loc}}(\Omega^{\pm}\cup\Gamma)]^5$ to be a weak solution to the Euler system \eqref{Euler-system-B} in $\Omega$ with a {\emph{contact discontinuity $\Gamma$}} if the following properties hold:
\begin{itemize}
\item[(i)] $\Gamma$ is a non-self-intersecting $C^1$-surface dividing $\Omega$ into two open subsets $\Omega^{\pm}$ such that $\Omega=\Omega^+\cup\Gamma\cup \Omega^-$;

\item[(ii)] $\mathbf{U}$ is a classical solution to \eqref{Euler-system-B} in $\Omega^\pm$;

\item[(iii)] $\rho>0$ in $\overline{\Omega}$;

\item[(iv)] $\left(\mathbf{u}|_{\overline{\Omega^-}\cap \Gamma}-\mathbf{u}|_{\overline{\Omega^+}\cap \Gamma}\right)(\mathbf{x})\neq \mathbf{0}$ holds for all $\mathbf{x}\in \Gamma$;

\item[(v)] $\mathbf{u}\cdot\mathbf{n}|_{\overline{\Omega^-}\cap\Gamma}=\mathbf{u}\cdot\mathbf{n}|_{\overline{\Omega^+}\cap\Gamma}=0$, where $\mathbf{n}$ is a unit normal vector field on $\Gamma$;

\item[(vi)] $[p]_\Gamma=0$.

\end{itemize}
\end{definition}

Contact discontinuities are among the fundamental characteristic discontinuities arising in multidimensional systems of conservation laws. Across a contact discontinuity, the pressure remains continuous, whereas the density, entropy, and tangential velocity may undergo jumps. In contrast to a shock, the normal velocity vanishes on a contact discontinuity. Moreover, since the contact discontinuity is itself a streamline and the flow states adjacent to it are generally unknown, its stability analysis naturally leads to a nonlinear free boundary problem. A rigorous theory for contact discontinuities is therefore an essential ingredient in the mathematical analysis of vortex sheets, jet boundaries, Mach reflection and refraction, and interactions among elementary waves.

Contact discontinuities have been studied from several complementary viewpoints. In \cite{bae2013stability}, the structural stability of a flat contact discontinuity separating two uniform subsonic flows was established under perturbations of an infinitely long duct, together with the determination of the asymptotic states at the far fields. In \cite{bae2018contact}, subsonic contact discontinuities with nonzero vorticity were constructed in two-dimensional infinitely long nozzles by means of a Helmholtz decomposition. This approach was subsequently extended to the three-dimensional axisymmetric setting in \cite{bae2019contact}, where contact discontinuities with nonzero vorticity and nonzero swirl were constructed in infinitely long circular cylinders. In \cite{chen2017steady}, steady Euler flows with large vorticity and characteristic discontinuities in arbitrary infinitely long nozzles were obtained through compensated compactness. More recently, the finitely long curved-nozzle problem was considered in \cite{weng2025contactcurved}, where a Lagrangian transformation was used to straighten the contact discontinuity, and the free boundary was determined through weighted H\"older estimates and the implicit function theorem. For a finitely long axisymmetric cylinder, Weng and Zhang \cite{weng2025contact} established the existence and uniqueness of subsonic flows with a contact discontinuity by introducing a modified Lagrangian transformation to handle the singularity near the symmetry axis and employing a deformation-curl decomposition together with the implicit function theorem. Contact discontinuities in supersonic and transonic regimes have also been investigated in \cite{chen2013stability,chen2013well,gao2025contact,huang2019contact,huang2021contact,wang2015structural,weng2025contactsupersonic}, while related discontinuous configurations arising in Mach reflection, shock interaction, subsonic flows past airfoils with vortex lines, jet flow, and piston problems were studied in \cite{chen2006stability,chen2008mach,chen2008stability,chen2022subsonic,pei2022shock,zhang2025contact}.

A key precedent for the use of a nontrivial background flow generated by nonzero swirl is the work of Fang, Gao, Xiang, and Zhao \cite{fang2024transonic}. They studied three-dimensional axisymmetric transonic shocks in a finite cylindrical nozzle and constructed special nontrivial background shock solutions generated by nonzero swirl functions. In particular, their background solutions may carry large vorticity, and the nonzero swirl plays an essential role in the mechanism determining the location of the shock front. The free boundary considered in \cite{fang2024transonic} is a transonic shock separating different flow regimes, whereas the free boundary in the present paper is a contact discontinuity in an infinitely long nozzle with a perturbed wall. Nevertheless, the two problems share an important structural feature: the swirl is already present in the background state and produces a genuinely nonconstant background flow.

The present work builds on the three-dimensional axisymmetric framework developed in \cite{bae2019contact}, but differs from it in two essential respects. First, we consider an infinitely long nozzle with a perturbed wall rather than a straight circular cylinder. Second, and more importantly, nonzero swirl is incorporated into the background state itself, whereas the background flow considered in \cite{bae2019contact} has zero swirl. This distinction substantially changes the structure of the background solution. Indeed, for an axisymmetric flow with constant density and velocity, the radial momentum balance implies that the pressure cannot remain constant in the radial direction. This computation is carried out in Proposition~\ref{background}.

The radial dependence of the background pressure creates a further difficulty in the transport part of the Euler system. In the chosen background state, not only the angular momentum density but also the entropy and the Bernoulli function depend on $r$. Consequently, the background values of these transported quantities have nonvanishing radial derivatives. When their transport equations are solved along the perturbed streamlines, the displacement of the streamlines acts on these background gradients and generates additional error terms. Therefore, the transport estimates are not uniform with respect to arbitrarily large background swirl, and the iteration scheme cannot be closed without imposing an upper bound on $|\beta_0|$. This can be seen from the estimate established in Lemma~\ref{Pro-trans}.

The remainder of the paper is organized as follows. In Section~\ref{3D-sec-Main}, we formulate the free boundary problem and state the main existence and far-field results in Theorem~\ref{3D-MainThm}. In Section~\ref{3D-sec-Hel}, we reformulate the problem through a Helmholtz decomposition and state the solvability of the reformulated problem as Theorem~\ref{3D-Thm-HD}. In Section~\ref{3D-sec-Cut}, we solve the corresponding free boundary problems in cut-off domains and derive estimates that are uniform with respect to the cut-off length. These results are used to prove Theorem~\ref{3D-Thm-HD} and hence Theorem~\ref{3D-MainThm}(a). Finally, in Section~\ref{3D-sec-ex}, we analyze the far-field behavior and prove Theorem~\ref{3D-MainThm}(b).

\section{Main theorems}\label{3D-sec-Main}
Let $(x_1,r,\theta)$ be the cylindrical coordinates of $(x_1,x_2, x_3)\in\mathbb{R}^3$, that is,
$$(x_1,x_2,x_3)=(x_1,r\cos\theta,r\sin\theta),\quad r\ge0,\quad \theta\in\mathbb{T},$$
where $\mathbb{T}$ denotes the one-dimensional torus of period $2\pi$.
Accordingly, any scalar function $f(\mathbf{x})$ can be represented as $f(\mathbf{x})=f(x_1,r,\theta)$, and any vector field $\mathbf{F}(\mathbf{x})$ can be represented as
$$\mathbf{F}(\mathbf{x})=F_{x_1}(x_1,r,\theta)\mathbf{e}_{x_1}+F_r(x_1,r,\theta)\mathbf{e}_r+F_{\theta}(x_1,r,\theta)\mathbf{e}_{\theta}$$ for orthonormal vectors
$$\mathbf{e}_{x_1}=(1,0,0),\quad\mathbf{e}_r=(0,\cos\theta,\sin\theta),\quad\mathbf{e}_{\theta}=(0,-\sin\theta,\cos\theta).$$
\begin{definition}
\begin{itemize}
\item[(i)]
A scalar function $f(\mathbf{x})$ is said to be axisymmetric if it is independent of $\theta$.
\item[(ii)] A vector field $\mathbf{F}(\mathbf{x})$ is said to be axisymmetric if $\mathbf{F}=F_{x_1}\mathbf{e}_{x_1}+F_r\mathbf{e}_r+F_\theta\mathbf{e}_\theta$ for axisymmetric functions $F_{x_1}$, $F_r$, and $F_{\theta}$.
\end{itemize}
\end{definition}
Define
\begin{equation*}
\mathcal{N}_0:=\left\{\mathbf{x}\in\mathbb{R}^3:\ x_1>0,\ 0\le r<1\right\}.
\end{equation*}

We first construct an axisymmetric background state with a contact discontinuity along $r=\frac{1}{2}$:
\begin{itemize}
\item[(i)] In the inner layer $\mathcal{N}_0\cap\{r<\frac{1}{2}\}$, fix constants $\rho_0^->0$ and $p_0^->0$, and set the density, velocity, and pressure to be $\rho_0^-$, $\mathbf{0}$, and $p_0^-$, respectively.
\item[(ii)] In the outer layer $\mathcal{N}_0\cap\{r>\frac{1}{2}\}$, fix constants $\rho_0^+>0$, $u_0>0$, and $\beta_0\in\mathbb{R}$, and set the density, velocity, and pressure to be $\rho_0^+$, $u_0\mathbf{e}_{x_1}+\beta_0\mathbf{e}_\theta$, and $p_0^+(r):=\rho_0^+\beta_0^2\ln(2r)+p_0^-$, respectively.
\end{itemize}
The above construction is justified by the following proposition.

\begin{proposition}\label{background}
The piecewise smooth vector field
\begin{equation*}
\mathbf{U}_0(\mathbf{x}):=\left\{
\begin{aligned}
(\rho_0^+, u_0\mathbf{e}_{x_1}+\beta_0\mathbf{e}_\theta,p_0^+(r))\quad&\mbox{for}\quad r>\frac{1}{2},\\
(\rho_0^-,\mathbf{0},p_0^-)\quad&\mbox{for}\quad r <\frac{1}{2}
\end{aligned}\right.
\end{equation*}
is a weak solution of the steady Euler system \eqref{Euler-system-B} in $\mathcal{N}_0$ with a contact discontinuity $\mathcal{N}_0\cap \{r=\frac{1}{2}\}$ in the sense of Definition~\ref{definition-wsol}.
\end{proposition}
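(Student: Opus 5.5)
We verify the six conditions of Definition~\ref{definition-wsol} directly, with $\Omega=\mathcal{N}_0$, $\Gamma=\mathcal{N}_0\cap\{r=\frac12\}$, $\Omega^-=\mathcal{N}_0\cap\{r<\frac12\}$ and $\Omega^+=\mathcal{N}_0\cap\{r>\frac12\}$. Condition (i) is clear, since $\Gamma$ is a smooth cylinder separating $\mathcal{N}_0$ into these two open sets. Condition (iii) holds because $\rho_0^\pm>0$. The regularity requirements hold as well: $\mathbf{U}_0$ is smooth up to $\Gamma$ from each side, and it is bounded on $\mathcal{N}_0$ because $p_0^+(r)$ is continuous on $[\frac12,1]$.

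For (ii), the inner state is constant, so all three equations hold trivially in $\Omega^-$. In $\Omega^+$ we use cylindrical coordinates with $\mathbf{u}=u_0\mathbf{e}_{x_1}+\beta_0\mathbf{e}_\theta$ and constant $\rho$. Then $\operatorname{div}(\rho\mathbf{u})=\rho_0^+\bigl(\partial_{x_1}u_0+\frac1r\partial_\theta\beta_0\bigr)=0$ by axisymmetry. Since $\operatorname{div}\mathbf{u}=0$, the momentum equation reduces to $\rho(\mathbf{u}\cdot\nabla)\mathbf{u}+\nabla p=\mathbf{0}$. The only nonzero convective term is the centripetal one, $(\mathbf{u}\cdot\nabla)\mathbf{u}=-\frac{\beta_0^2}{r}\mathbf{e}_r$, which comes from $\partial_\theta\mathbf{e}_\theta=-\mathbf{e}_r$. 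The $x_1$- and $\theta$-components of the momentum equation therefore vanish because $p$ depends only on $r$. The radial component reads $-\rho_0^+\beta_0^2/r+\partial_r p_0^+=0$, and this holds since $\partial_r\bigl(\rho_0^+\beta_0^2\ln(2r)\bigr)=\rho_0^+\beta_0^2/r$. For the energy equation, $\operatorname{div}(\rho\mathbf{u}B)=\rho\mathbf{u}\cdot\nabla B$. Here $B$ depends only on $r$ (through $p_0^+(r)$), and $\mathbf{u}\cdot\mathbf{e}_r=0$, so this term vanishes as well.

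For (iv)--(vi), take $\mathbf{n}=\mathbf{e}_r$ on $\Gamma$. Both one-sided velocities have zero radial component, which gives (v). The velocity jump is $u_0\mathbf{e}_{x_1}+\beta_0\mathbf{e}_\theta\neq\mathbf{0}$ because $u_0>0$, which gives (iv). Finally, $p_0^+(\frac12)=\rho_0^+\beta_0^2\ln 1+p_0^-=p_0^-$, so $[p]_\Gamma=0$, which is (vi).

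Every step is routine. The only point needing care is the convective derivative in cylindrical coordinates: the swirl produces exactly the centripetal term, and the logarithmic pressure is chosen precisely to balance it. If one instead wants the integral identities of Definition~\ref{def-wsol}, they follow from (ii), (v) and (vi) by the integration-by-parts equivalence stated before that definition.
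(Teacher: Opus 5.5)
Your proposal is correct and follows the paper's route. Like the paper, you check the conditions of Definition~\ref{definition-wsol} directly: the only nontrivial step is the radial momentum balance $-\rho_0^+\beta_0^2/r+\partial_r p_0^+=0$, and taking $\mathbf{n}=\mathbf{e}_r$ gives (iv)--(vi), with $p_0^+(\tfrac12)=p_0^-$ giving $[p]_\Gamma=0$. Your explicit checks of (i), (iii), the regularity class, and the centripetal term are details the paper leaves implicit.
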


\begin{proof}
We verify that $\mathbf{U}_0$ satisfies the conditions in Definition~\ref{definition-wsol}.
It is clear that all equations in \eqref{Euler-system-B} are satisfied in the inner region $\mathcal{N}_0^-:=\mathcal{N}_0\cap\{r<\frac12\}$.
In $\mathcal N_0^+:=\mathcal{N}_0\cap\{r>\frac12\}$, we have $$(\rho,\mathbf u,p)=\bigl(\rho_0^+,\,u_0\mathbf e_{x_1}+\beta_0\mathbf e_\theta,\,p_0^+(r)\bigr),$$
where $$p_0^+(r)=\rho_0^+\beta_0^2\ln(2r)+p_0^-.$$
Since $\rho_0^+$, $u_0$ and $\beta_0$ are constants, $\operatorname{div} \left(\rho\mathbf{u}\right)=0$.
For an axisymmetric flow, the radial component of the momentum equation is $$ \rho\left(u_{x_1}\partial_{x_1}+u_r\partial_r\right)u_r-\frac{\rho u_\theta^2}{r}+\partial_r p=0 $$ where $(u_{x_1}, u_r, u_\theta)=(\mathbf{u}\cdot\mathbf{e}_{x_1},\mathbf{u}\cdot\mathbf{e}_r,\mathbf{u}\cdot\mathbf{e}_\theta)$.
For the background state, this equation reduces to $$-\frac{\rho_0^+\beta_0^2}{r}
+\partial_r p_0^+(r)=0,$$
which holds because $$\partial_r p_0^+(r)=\frac{\rho_0^+\beta_0^2}{r}.$$
The other momentum equations are immediate.
Since $B$ depends only on $r$ and $\mathbf{u}\cdot\mathbf{e}_r=0$, we also have $$\operatorname{div}\left(\rho\mathbf u B\right)=\rho\mathbf{u}\cdot\nabla B+B\operatorname{div}\left(\rho\mathbf{u}\right)=0.$$
Thus $\mathbf{U}_0$ is a classical solution to \eqref{Euler-system-B} in each of $\mathcal{N}_0^\pm$.

Next, let $\mathbf n=\mathbf e_r$ be the unit normal vector on $\mathcal{N}_0\cap\{r=\frac12\}$. Then
$$\mathbf u|_{\mathcal N_0^-}\cdot \mathbf n=0,
\qquad
\mathbf u|_{\mathcal N_0^+}\cdot \mathbf n
=(u_0\mathbf e_{x_1}+\beta_0\mathbf e_\theta)\cdot \mathbf e_r=0.$$
Moreover, $$p_0^+\left(\frac12\right)=\rho_0^+\beta_0^2\ln 1+p_0^-=p_0^-,$$
hence $[p]_{\mathcal{N}_0\cap\{r=\frac12\}}=0.$
Finally, $\mathbf u|_{\mathcal N_0^+}-\mathbf u|_{\mathcal N_0^-}
=u_0\mathbf e_{x_1}+\beta_0\mathbf e_\theta\neq 0$ on $\mathcal{N}_0\cap\{r=\frac12\}$.
Therefore, $\mathbf{U}_0$ satisfies all the conditions in Definition~\ref{definition-wsol}, so $\mathbf{U}_0$ is a weak solution of \eqref{Euler-system-B} in $\mathcal{N}_0$ with a contact discontinuity $\mathcal{N}_0\cap\{r=\frac12\}$.
\end{proof}

\begin{remark}
For an axisymmetric flow of the above form with $\rho>0$,
$$ p\equiv\mathrm{constant}\quad\Longleftrightarrow\quad u_\theta\equiv0.$$
Indeed, this follows immediately from
$$\partial_rp=\frac{\rho u_\theta^2}{r}, \qquad r>0.$$
Hence a nonzero swirl necessarily generates a nontrivial radial pressure gradient.
\end{remark}

In this case, the entropy $S_0$ and Bernoulli function $B_0$ are piecewise smooth functions with
\begin{equation}\label{def-S0-B0}
\begin{aligned}
&S_0(\mathbf{x})=\left\{\begin{aligned}
	\frac{p_0^+(r)}{(\rho_0^+)^{\gamma}}=:S_0^+(r)\quad&\mbox{for}\quad\frac{1}{2}<r<1,\\
	\frac{p_0^-}{(\rho_0^-)^{\gamma}}=:S_0^-\quad&\mbox{for}\quad0\le r <\frac{1}{2},\end{aligned}\right.\\
& B_0(\mathbf{x})=\left\{\begin{aligned}
	\frac{1}{2}(u_0^2+\beta_0^2)+\frac{\gamma p_0^+(r)}{(\gamma-1)\rho_0^+}=:B_0^+(r)\quad&\mbox{for}\quad\frac{1}{2}<r<1,\\
	\frac{\gamma p_0^-}{(\gamma-1)\rho_0^-}=:B_0^-\quad&\mbox{for}\quad0\le r<\frac{1}{2}.\end{aligned}\right.
\end{aligned}
\end{equation}

\begin{figure}[ht]
\begin{center}
\begin{tikzpicture}[scale=1]
\draw[domain=pi:2*pi,smooth,dashed,variable=\x,red]
plot ({1/2*\x+1/6*sin(\x r)-6},{2*cos(\x r)});
\draw[domain=3*pi:4*pi,smooth,dashed,variable=\x,red]
plot ({1/2*\x+1/6*sin(\x r)-6},{2*cos(\x r)});
\draw[domain=5*pi:6*pi,smooth,dashed,variable=\x,red]
plot ({1/2*\x+1/6*sin(\x r)-6},{2*cos(\x r)});
\draw [thick,->] (-8,0) -- (3*pi-4,0);
\draw [thick,->] (-6,-3) -- (-6,3);
\draw [thick,blue] (-6,0) ellipse (1/3 and 1);
\draw [thick,blue] (-6,1) -- (3*pi-6,1);
\draw [thick,blue] (-6,-1) -- (3*pi-6,-1);
\draw [thick] (-6,0) ellipse (2/3 and 2);
\draw [thick] (-6,2) -- (3*pi-6,2);
\draw [thick] (-6,-2) -- (3*pi-6,-2);
\draw [dashed,blue] (3*pi-6,0) ellipse (1/3 and 1);
\draw [dashed] (3*pi-6,0) ellipse (2/3 and 2);
\draw[domain=0:3*pi/5,smooth,thick,variable=\x,red,->]
plot ({1/2*\x+1/6*sin(\x r)-6},{2*cos(\x r)});
\draw[domain=3*pi/5:pi,smooth,thick,variable=\x,red]
plot ({1/2*\x+1/6*sin(\x r)-6},{2*cos(\x r)});
\draw[domain=2*pi:13*pi/5,smooth,thick,variable=\x,red,->]
plot ({1/2*\x+1/6*sin(\x r)-6},{2*cos(\x r)});
\draw[domain=13*pi/5:3*pi,smooth,thick,variable=\x,red]
plot ({1/2*\x+1/6*sin(\x r)-6},{2*cos(\x r)});
\draw[domain=4*pi:23*pi/5,smooth,thick,variable=\x,red,->]
plot ({1/2*\x+1/6*sin(\x r)-6},{2*cos(\x r)});
\draw[domain=23*pi/5:5*pi,smooth,thick,variable=\x,red]
plot ({1/2*\x+1/6*sin(\x r)-6},{2*cos(\x r)});
\node at (-6.1,0.2) {0};
\node at (-6.2,1.2) {$\frac{1}{2}$};
\node at (-6.2,2.2) {1};
\node at (1.5*pi-6,0.5) {$\rho_0^-,\mathbf{0},p_0^-$};
\node at (1.5*pi-6,1.5) {$\rho_0^+,u_0\mathbf{e}_{x_1}+\beta_0\mathbf{e}_\theta,p_0^+$};
\node[below right] at (3*pi-4,0) {$x_1$};
\node[above,blue] at ({3*pi-6},1.0) {contact discontinuity $\left(r=\frac12\right)$};

\end{tikzpicture}
\end{center}
\caption{Background state}
\label{Fig-back}
\end{figure}

In the present nonzero swirl setting, the relevant condition is not the standard subsonic condition $|\mathbf{u}|<c$, but rather
\begin{equation}\label{def-pseudo-subsonic}
    \inf_{\mathcal{N}_0}\left(1-\frac{u_0^2}{c_0^2}-\frac{u_0^2\beta_0^2}{4c_0^4}\right)>0\quad\text{for the sound speed}\quad c=\sqrt{\frac{\gamma p}{\rho}}.
\end{equation}
We shall refer to this condition as the \emph{pseudo-subsonic} condition. Throughout the paper, we assume that the background state satisfies \eqref{def-pseudo-subsonic}. The reason for introducing this notion is that it is exactly the condition that guarantees the uniform ellipticity of the principal part of the linearized operator around the background state; see Lemma~\ref{lem-pseudosubsonic}. We emphasize that pseudo-subsonicity is weaker than the usual subsonic condition. Indeed, from \eqref{def-pseudo-subsonic}, one sees that the admissible range of $\beta_0$ becomes larger as $u_0$ becomes smaller. Therefore, a flow can be pseudo-subsonic while remaining supersonic in the physical sense, provided that the swirl component $\beta_0$ is sufficiently large.

Our goal is to establish the stability of the background state under perturbation of the nozzle wall and the entrance data. More precisely, we consider the following problem.

\begin{problem}\label{3D-Prob1}
Fix $\epsilon\in(0,1/10)$, $\alpha\in(0,1)$ and $L_0>0$.
Let $R\in C^{2, \alpha}([0, \infty))$ satisfy 
\begin{equation}
\label{condition-R}
R(0)=1, \quad R'(0)=0, \quad R(x_1)=R_\infty \quad\mbox{for all}\quad x_1>L_0.
\end{equation}
Define \begin{equation}
\label{definition-domain}
\begin{aligned}
    \mathcal{N}:=\left\{\mathbf{x}\in \mathbb{R}^3: x_1>0, r<R(x_1)\right\}, \\ \Gamma_\mathrm{w}:=\partial\mathcal{N}\cap\{r=R(x_1)\},\quad
\Gamma_\mathrm{en}:=\partial\mathcal{N}\cap\{x_1=0\}, \quad \Gamma_\mathrm{en}^+:=\Gamma_\mathrm{en}\cap\left\{r\ge \frac{1}{2}\right\}.
\end{aligned}
\end{equation}
For given radial functions
$S_\mathrm{en}(r)$, $\beta_\mathrm{en}(r)$, and $u_r^\mathrm{en}(r)$, define
\begin{equation}
\label{definition-sigma}
\sigma(R, S_\mathrm{en}, \beta_\mathrm{en}, u_r^\mathrm{en}):=\|R-1\|_{2, \alpha, [0, \infty)}+\|S_\mathrm{en}-S_0\|_{2,\alpha,\Gamma_\mathrm{en}^+}+\|\beta_\mathrm{en}-\beta_0\|_{2,\alpha,\Gamma_\mathrm{en}^+}+\|u_r^\mathrm{en}\|_{1,\alpha,\Gamma_\mathrm{en}^+},
\end{equation}

Assume that
\begin{equation}\label{def-entrance-ep}
\begin{aligned}
(S_\mathrm{en},\beta_\mathrm{en})\equiv(S_0^-,0)\quad&\mbox{on}\quad \Gamma_\mathrm{en}\setminus\Gamma_\mathrm{en}^+,\\
u_r^\mathrm{en}\equiv0\quad&\mbox{on}\quad \Gamma_\mathrm{en}
\setminus\left\{ \frac{1}{2}+\epsilon<r<1-\epsilon\right\},
\end{aligned}
\end{equation}
and
\begin{equation*}
\sigma(R, S_\mathrm{en}, \beta_\mathrm{en}, u_r^\mathrm{en})\le \sigma_0
\end{equation*}
for sufficiently small $\sigma_0>0$ to be specified later.

Find a weak solution $\mathbf{U}=(\rho, \mathbf{u}, p)$ to the Euler system \eqref{Euler-system-B} with a contact discontinuity
$$\Gamma_{g_D}:=\mathcal{N}\cap\{r=g_D(x_1)\}$$
in the sense of Definition~\ref{definition-wsol} in $\mathcal{N}$ such that the following properties hold:
\begin{itemize}
\item[(a)] $\rho, \mathbf{u}$, and $p$ are axisymmetric.
\item[(b)] $g_D(0)=\frac{1}{2}$.
\item[(c)] Pseudo-subsonicity: 
\begin{equation}\label{pseudo-subsonicity}\inf_{\mathcal{N}}\left(1-\frac{u_{x_1}^2+u_r^2}{c^2}-\frac{(u_{x_1}^2+u_r^2)u_\theta^2}{4c^4}\right)>0 \quad \mbox{for the sound speed}\quad c=\sqrt{\frac{\gamma p}{\rho}}\end{equation}
\item[(d)] $\rho>0$ {in} $\overline{\mathcal{N}}.$
\item[(e)] $\mathbf{U}$ satisfies the boundary conditions:
\begin{equation*}\label{3D-BC-ent1}
\left\{\begin{aligned}
\frac{1}{2}|\mathbf{u}|^2+\frac{\gamma p}{(\gamma-1)\rho}=B_0, \quad \frac{p}{\rho^{\gamma}}=S_\mathrm{en},\quad \mathbf{u}\cdot\mathbf{e}_{\theta}=\beta_\mathrm{en},\quad\mathbf{u}\cdot\mathbf{e}_r=u_r^\mathrm{en}&\quad\mbox{on}\quad \Gamma_\mathrm{en},\\\mathbf{u}\cdot\mathbf{n}_R=0&\quad\mbox{on}\quad\Gamma_\mathrm{w},\\ [p]_{\Gamma_{g_D}}=0,\quad\mathbf{u}\cdot \mathbf{n}_{g_D}=0&\quad\mbox{on}\quad \Gamma_{g_D},
\end{aligned}\right.
\end{equation*} where $\mathbf{n}_R$, $\mathbf{n}_{g_D}$ denote unit normal vector fields on $\Gamma_\mathrm{w}$, $\Gamma_{g_D}$, respectively.
\end{itemize}
\end{problem}

The conditions $R(0)=1$ and $R'(0)=0$ mean that the perturbed nozzle agrees with the reference cylinder at the entrance up to first order. In particular, the wall meets the entrance orthogonally, which is consistent with the axisymmetric nozzle geometry and avoids an artificial corner-type incompatibility at $\Gamma_\mathrm{w}\cap\Gamma_{\mathrm{en}}^+$.

The constant state $(\rho_0^-,\mathbf{0},p_0^-)$ clearly satisfies the following properties:
\begin{itemize}
\item[(i)] $(\rho_0^-,\mathbf{0},p_0^-)$ satisfies \eqref{pseudo-subsonicity} in $\mathcal{N}\cap\{r<g_D(x_1)\}$.
\item[(ii)] $\rho_0^->0$;
\item[(iii)] By \eqref{def-S0-B0},
$$\frac{p_0^-}{(\rho_0^-)^{\gamma}}=S_0^-,\quad \frac{\gamma p_0^-}{(\gamma-1)\rho_0^-}=B_0^-;$$
\item[(iv)] $\mathbf{u}\cdot\mathbf{v}=0$ for any vector $\mathbf{v}\in\mathbb{R}^3$.
\end{itemize}
Motivated by this observation, we fix $(\rho,\mathbf u,p)=(\rho_0^-,\mathbf 0,p_0^-)$ in the inner layer and reduce Problem 2.1 to the following free boundary problem in the outer layer.

\begin{problem}\label{3D-Problem2}
Under the same assumptions as in Problem~\ref{3D-Prob1},
find a function $g_D:\mathbb{R}^+\to (0,1)$ and a $C^1$ solution $\mathbf{U}=(\rho, \mathbf{u}, p)$ to \eqref{Euler-system-B} in $\mathcal{N}_{g_D}^+:=\mathcal{N}\cap\{r>g_D(x_1)\}$ such that
\begin{itemize}
\item[(a)] $\rho, \mathbf{u}$, and $p$ are axisymmetric.
\item[(b)] \begin{equation*}
g_D(0)=\frac{1}{2}.
\end{equation*}
\item[(c)] $(\rho,\mathbf{u},p)$ satisfies \eqref{pseudo-subsonicity} in $\mathcal{N}\cap\{r>g_D(x_1)\}$.
\item[(d)] $\rho>0$ {in} $\overline{\mathcal{N}^+_{g_D}}.$
\item[(e)] $\mathbf{U}$ satisfies the boundary conditions:
\begin{equation}\label{Prob2-BC-ent}
\left\{\begin{aligned}
\frac{1}{2}|\mathbf{u}|^2+\frac{\gamma p}{(\gamma-1)\rho}=B_0, \quad \frac{p}{\rho^{\gamma}}=S_\mathrm{en},\quad \mathbf{u}\cdot\mathbf{e}_{\theta}=\beta_\mathrm{en},\quad\mathbf{u}\cdot\mathbf{e}_r=u_r^\mathrm{en}&\quad\mbox{on}\quad \Gamma_\mathrm{en}^+,\\\mathbf{u}\cdot \mathbf{n}_R=0 &\quad \mbox{on} \quad \Gamma_\mathrm{w},\\ p=p_0^-,\quad\mathbf{u}\cdot \mathbf{n}_{g_D}=0&\quad\mbox{on}\quad \Gamma_{g_D},
\end{aligned}\right.
\end{equation}where $\mathbf{n}_R$, $\mathbf{n}_{g_D}$ denote unit normal vector fields on $\Gamma_\mathrm{w}$ and $\Gamma_{g_D}$, respectively.
\end{itemize}
\end{problem}

\begin{convention}
Before stating the main theorem, we introduce the following convention. A constant $C$ is said to depend on the data if it depends on $(L_0,R_\infty,\rho_0^+,u_0, p_0^-, \gamma, \alpha)$.
Now we state the main results in this paper.
\end{convention}

\begin{theorem}\label{3D-MainThm}
For a given function $R(x_1)$ on $[0, \infty)$ and given radial functions
$S_\mathrm{en}(r)$, $\beta_\mathrm{en}(r)$, $u_r^\mathrm{en}(r)$ on $\Gamma_\mathrm{en}$, assume that they satisfy \eqref{condition-R}, \eqref{def-entrance-ep}, and let $\sigma(R, S_\mathrm{en}, \beta_\mathrm{en}, u_r^\mathrm{en})$ be given by \eqref{definition-sigma}. For simplicity of notation, let $\sigma$ denote $\sigma(R, S_\mathrm{en}, \beta_\mathrm{en}, u_r^\mathrm{en})$.
\begin{itemize}
\item[(a)](Existence)
For any fixed $\alpha\in(0,1)$, there exist a positive constant $\bar\beta>0$ and a small constant $\sigma_1>0$ depending only on the data such that if
\begin{itemize}
\item $|\beta_0|\le\bar\beta$;
\item $\sigma\le\sigma_1$,
\end{itemize}
 then there exists an axisymmetric solution $\mathbf{U}=(\rho, \mathbf{u},p)$ of Problem~\ref{3D-Problem2} with a contact discontinuity $r=g_D(x_1)$ satisfying \begin{equation}\label{Thm2.1-uniq-est}
\left\|g_D-\frac{1}{2}\right\|_{2,\alpha,\mathbb{R}^+}+\|(\rho, \mathbf{u},p)-(\rho_0^+,\mathbf{u}_0,p_0^+)\|_{1,\alpha,\mathcal{N}^+_{g_D}}\le C\sigma\quad\mbox{for}\,\,\mathbf{u}_0:=u_0\mathbf{e}_{x_1}+\beta_0\mathbf{e}_\theta,
\end{equation}
where the constant $C>0$ depends only on the data.
\item[(b)](Asymptotic state)
There exist constants $\bar\beta_2\in(0,\bar\beta],\,\sigma_2\in(0,\sigma_1]$ depending only on the data such that if
\begin{itemize}
\item $|\beta_0|\le\bar\beta_2$;
\item $\sigma\le\sigma_2$,
\end{itemize}
then the solution $\mathbf{U}=(\rho, \mathbf{u},p)$ in (a) satisfies
\begin{equation*}
\begin{aligned}
&\lim_{L\rightarrow\infty}\left\|g_D'\right\|_{C^1(\{x_1\ge L\})}=0,\\
&\lim_{L\rightarrow\infty}\|\mathbf{u}\cdot\mathbf{e}_r\|_{C^1(\overline{\mathcal{N}^+_{g_D}\cap\{x_1>L\}})}=0,\\
&\lim_{L\rightarrow\infty}\left\|\partial_rp-\frac{\rho (\mathbf{u}\cdot\mathbf{e}_\theta)^2}{r}\right\|_{C^0(\overline{\mathcal{N}_{g_D}^+\cap\{x_1>L\}})}=0.
\end{aligned}
\end{equation*}
\end{itemize}
\end{theorem}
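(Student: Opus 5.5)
The plan follows the route announced in the introduction: reformulate the free boundary problem with a Helmholtz decomposition, solve cut-off problems in $\{x_1<L\}$ with estimates independent of $L$, and pass to the limit $L\to\infty$.

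\textbf{Reformulation.} We write the velocity in the outer layer $\mathcal{N}^+_{g_D}$ as
\[
\mathbf{u}=\nabla\varphi+\operatorname{curl}(\psi\mathbf{e}_\theta)+\frac{\Lambda}{r}\mathbf{e}_\theta ,
\]
where $\Lambda=r u_\theta$ is the angular momentum density. Since $\rho\mathbf{u}\cdot\nabla$ annihilates $S$, $B$ and $\Lambda$, these three quantities are transported along streamlines. We therefore introduce a stream function $w$ with $\rho(u_{x_1},u_r)=\frac1r(\partial_r w,-\partial_{x_1}w)$ and solve the transport equations as $S=\mathcal S(w)$, $B=\mathcal B(w)$ and $\Lambda=\mathcal L(w)$. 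The functions $\mathcal S,\mathcal B,\mathcal L$ are read off from the entrance data on $\Gamma_{\mathrm{en}}^+$. The density is then recovered from Bernoulli's law as a function of $(|\nabla\varphi+\operatorname{curl}(\psi\mathbf e_\theta)|^2,\Lambda,S,B)$.

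The continuity equation becomes a quasilinear equation for $\varphi$. Its principal part is uniformly elliptic exactly under the pseudo-subsonic condition \eqref{def-pseudo-subsonic}; this is Lemma~\ref{lem-pseudosubsonic}. The vorticity equation gives a linear elliptic equation $-\Delta(\psi\mathbf e_\theta)=\boldsymbol\omega$, whose source involves $\mathcal S'(w)$, $\mathcal B'(w)$ and $\mathcal L\mathcal L'(w)$.

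The boundary conditions are: $p=p_0^-$ and $\mathbf u\cdot\mathbf n=0$ on $r=g_D$, slip on $\Gamma_{\mathrm w}$, and the entrance data. The free boundary is fixed by the condition that $r=g_D(x_1)$ is the level set $w=w_0:=w(0,\frac12)$. Equivalently,
\[
g_D'=\frac{u_r}{u_{x_1}},\qquad g_D(0)=\tfrac12 .
\]
The pressure condition $p=p_0^-$ then serves as a nonlinear Bernoulli-type boundary condition for $\varphi$. This is Theorem~\ref{3D-Thm-HD}.

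\textbf{Iteration in cut-off domains.} Fix $L>L_0$. On the iteration set we prescribe $g$, $\tilde\varphi$, $\tilde\psi$ and the transported data, all in $C^{2,\alpha}$ (or $C^{1,\alpha}$) balls of radius $M\sigma$ around the background. On $\{x_1=L\}$ we impose a Neumann condition for $\varphi$ and a Dirichlet condition for $\psi$. The cycle is as follows.
\begin{enumerate}[label=(\roman*)]
\item Flatten the domain by $r\mapsto \frac12+\frac{(r-g)(R_\infty-\frac12)}{R(x_1)-g}$ or similar.
\item Solve the linearized mixed boundary value problem for $\varphi$. We obtain weighted $C^{1,\alpha}$ and $C^{2,\alpha}$ estimates up to the corners, using $R'(0)=0$ and the compatibility condition \eqref{def-entrance-ep}. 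The bound must be independent of $L$; for this we use an $H^1$ estimate through a Poincaré-type inequality on the cross-section, then Schauder estimates on unit cells.
\item Solve for $\psi$. The axis singularity is handled by viewing $\psi\mathbf e_\theta$ as a three-dimensional vector field.
\item Update $w$ and then $(S,B,\Lambda)$ by transport (Lemma~\ref{Pro-trans}).
\item Update $g$ from the ODE for $g_D'$.
\end{enumerate}
Schauder's fixed point theorem on this compact convex set gives a solution $(\varphi_L,\psi_L,g_L)$. Diagonal compactness as $L\to\infty$ then yields a solution in $\mathcal N$ with the bound \eqref{Thm2.1-uniq-est}.

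\textbf{Main obstacle.} The hardest step is closing the map in step (iv) uniformly in $L$. The backgrounds $S_0^+$, $B_0^+$ and $\Lambda_0=\beta_0 r$ have radial derivatives of size $O(\beta_0^2)$ and $O(\beta_0)$. Composing these with a perturbed stream function produces errors of the form
\[
C\,|\beta_0|(1+|\beta_0|)\,\|w-w_0\|\le C|\beta_0|(1+|\beta_0|)\,M\sigma .
\]
These errors are not small relative to $M\sigma$ unless $|\beta_0|\le\bar\beta$. The vorticity source then feeds back into $\psi$ and $\varphi$ with a constant $C_*$ independent of $\beta_0$. I will therefore choose $\bar\beta$ so that $C_*\bar\beta(1+\bar\beta)\le\frac12$, then choose $M$ large and $\sigma_1$ small, so that the map sends the ball into itself.

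\textbf{Part (b).} For the asymptotic state I will use the uniform $C^{2,\alpha}$ bounds and translate: set $\mathbf U_k(x_1,\cdot)=\mathbf U(x_1+k,\cdot)$. Compactness gives subsequential limits on $(-\infty,\infty)$ in a straight cylinder $r<R_\infty$. Each limit solves the same system, with a free boundary satisfying $|g-\frac12|\le C\sigma$.

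The key task is to show that any such limit is $x_1$-independent with $u_r\equiv0$. For this I will use an energy identity for the linearized elliptic system in the infinite strip. Bounded solutions with homogeneous slip/pressure boundary data must be $x_1$-independent; this again needs smallness of $\beta_0$, to absorb the $\beta_0$-dependent lower-order coupling, which gives $\bar\beta_2$.

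Then $u_r=0$ and $g'=0$ in every limit. The radial momentum equation reduces to $\partial_rp=\rho u_\theta^2/r$. Since the limit is independent of the subsequence, the three stated limits follow.
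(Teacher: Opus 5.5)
\textbf{Main gap: the free-boundary conditions in (a).} Your overall architecture for (a) matches the paper: Helmholtz decomposition, cut-off domains, transport along the stream function with an $O(|\beta_0|+\beta_0^2)$ error that forces $|\beta_0|\le\bar\beta$, and a limit $L\to\infty$. However, you assign the free-boundary conditions to the wrong unknown, and as set up your iteration cannot close uniformly in $L$.

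Once $\mathbf u\cdot\mathbf n=0$ is used to define $g_D'=u_r/u_{x_1}$, the condition $p=p_0^-$ only prescribes the tangential speed. Since $\mathbf u\cdot\boldsymbol\tau_{g_D}=\nabla\varphi\cdot\boldsymbol\tau_{g_D}+\frac1r\nabla(r\psi)\cdot\mathbf n_{g_D}$, imposing it on $\varphi$ gives the purely tangential condition $\nabla\phi\cdot\boldsymbol\tau_{g_D}=\mathcal A-\frac1r\nabla(r\psi)\cdot\mathbf n_{g_D}$ for $\phi=\varphi-\varphi_0$. You also never state any condition for $\psi$ on $r=g_D$.

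A tangential condition is not an oblique boundary condition. It determines $\phi$ on $r=g_D$ only by integrating from $x_1=0$. On that streamline $S=S_{\rm en}(\frac12)$ and $\Lambda=\frac12\beta_{\rm en}(\frac12)$. So $\mathcal A$ contains an $x_1$-independent part of size $\sigma$, which is generically nonzero and not excluded by the hypotheses. It also contains a part depending on $g_D-\frac12$, and this part does not decay in general. Hence $\phi|_{r=g_D}$ typically grows like $\sigma x_1$. No $L$-independent $M$ then makes your ball $\|\tilde\varphi-\varphi_0\|_{2,\alpha}\le M\sigma$ invariant. The $H^1$/Poincaré step cannot repair this, because the boundary data themselves are unbounded.

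\textbf{The missing idea: the paper's splitting.} Impose $\nabla\varphi\cdot\boldsymbol\tau_{g_D}=\nabla\varphi_0\cdot\boldsymbol\tau_{g_D}$, which is the homogeneous Dirichlet condition $\phi=0$ on $r=g_D$ (compatible with $\varphi_{\rm en}(\frac12)=0$). Then move the whole Bernoulli defect into the conormal condition $\frac1r\nabla(r\psi)\cdot\mathbf n_{g_D}=\mathcal A(g_D,g_D',S,B,\Lambda)$, with $\psi=0$ on the wall. A non-decaying Neumann flux is harmless for $Z=r\psi$, because the Dirichlet wall controls $Z$ uniformly in $L$ through the barrier $2\mathfrak n(8-r^3)$. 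Meanwhile $\phi=0$ on $r=g_D$ supplies exactly the cross-sectional Dirichlet condition that your Poincaré argument (or the paper's barrier $\propto 2-e^{-4r}$) needs. With this formulation your scheme for (a) works. Note that $\beta_0$-smallness is then also needed in the elliptic step, since $G$ and $\mathcal A$ depend on $(\tilde\phi,\tilde\psi,f_*)$ with $O(\beta_0^2)$ coefficients.

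\textbf{Part (b).} Your route is genuinely different but workable: a Liouville theorem for translation limits, instead of the paper's direct bound $\int_{L_0+1}^\infty\int|\nabla\omega|^2\,dr\,dx_1<\infty$. Both routes rest on the same energy identity, which you have not derived. The right unknown is $\omega=\partial_{x_1}\mathfrak h=-r\rho u_r$, where $\mathfrak h$ is the stream function, rather than the coupled $(\varphi,\psi)$ system. The identity needs three ingredients:
\begin{enumerate}
\item the sign $I_1+I_4+I_6\le0$, which comes from $\mathfrak O=r^2\rho(c^2-u_{x_1}^2-u_r^2)>0$;
\item the Robin condition on $r=g_D$ with coefficient $\widetilde\mu=O(\beta_0^2+\sigma)$;
\item a Poincar\'e inequality, from $\omega=0$ on the wall.
\end{enumerate}
For bounded $\omega$ on the whole strip, the cut-off error is then $O(1/L)$, so $\omega\equiv0$. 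You do not need the limit to be unique. It suffices that every translation limit has $u_r\equiv0$, combined with a contradiction argument using the uniform $C^{1,\alpha}$ bounds.
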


\section{Reformulation of Problem~\ref{3D-Problem2} via Helmholtz decomposition}\label{3D-sec-Hel}

Since $p=S\rho^{\gamma}$, we may regard Problem~\ref{3D-Problem2} as a problem for $(\rho, \mathbf{u}, S)$.
Assume that a smooth solution $(\rho, \mathbf{u}, S)$ of \eqref{Euler-system-B} in $\mathcal{N}_{g_D}^+$ is axisymmetric, so that
$$\rho=\rho(x_1,r),\quad\mathbf{u}=u_{x_1}(x_1,r)\mathbf{e}_{x_1}+u_r(x_1,r)\mathbf{e}_r+u_{\theta}(x_1,r)\mathbf{e}_{\theta},\quad S=S(x_1,r).$$
Define $\Lambda$ by
\begin{equation*}
\Lambda(x_1,r):=ru_{\theta}(x_1,r),
\end{equation*} which represents the angular momentum density.
Then the steady Euler system \eqref{Euler-system-B} can be rewritten in axisymmetric form as:
\begin{equation}\label{3D-ang}
\left\{\begin{aligned}
&\partial_{x_1}(\rho u_{x_1})+\partial_r(\rho u_r)+\frac{\rho u_r}{r}=0,\\
&\rho(u_{x_1}\partial_{x_1}+u_r\partial_r)u_r-\frac{\rho \Lambda^2}{r^3}+\partial_r (S\rho^\gamma)=0,\\
&\rho(u_{x_1}\partial_{x_1}+u_r\partial_r)S=0,\\
&\rho(u_{x_1}\partial_{x_1}+u_r\partial_r)B=0,\\
&\rho(u_{x_1}\partial_{x_1}+u_r\partial_r)\Lambda=0.
\end{aligned}
\right.
\end{equation}

Let $g_D:\mathbb R^+\to(0,1)$ be determined together with $(\rho,\mathbf u,p)$, and represent the velocity field in $\mathcal N_{g_D}^+$ as
\begin{equation*}\label{u-HD}
\mathbf{u}(\mathbf{x})=\nabla\varphi(\mathbf{x})+\operatorname{curl}\mathbf{V}(\mathbf{x})\quad\operatorname{in}\quad \mathcal{N}_{g_D}^+
\end{equation*}
for axisymmetric functions
$$\varphi(\mathbf{x})=\varphi(x_1,r),\quad\mathbf{V}(\mathbf{x})=h(x_1,r)\mathbf{e}_r+\psi(x_1,r)\mathbf{e}_{\theta}.$$
If $(\varphi, \mathbf{V})$ are $C^2$ in $\mathcal{N}_{g_D}^+$,
then a direct computation yields
\begin{equation}\label{3D-u}\begin{aligned}
\mathbf{u}&=\left(\partial_{x_1}\varphi+\frac{1}{r}\partial_r(r\psi)\right)\mathbf{e}_{x_1}+(\partial_r\varphi-\partial_{x_1}\psi)\mathbf{e}_r+\left(\frac{\Lambda}{r}\right)\mathbf{e}_{\theta}\\&=:\mathbf{q}(r, \psi, D\psi, D\varphi, \Lambda)\quad\mbox{for}\quad D=(\partial_{x_1}, \partial_r).
\end{aligned}\end{equation}
For later convenience, we denote by
\begin{equation}\label{def-T}
\mathbf{t}(r,\psi,D\psi,\Lambda):=\mathbf{q}(r,\psi,D\psi,D\varphi,\Lambda)-\nabla\varphi\left(=\operatorname{curl}\mathbf{V}\right)
\end{equation} the non-potential part of the velocity field.

Then, as in \cite{bae20183}, we can rewrite the system \eqref{3D-ang} as a system for $(\varphi,\psi,S,B,\Lambda)$:\begin{equation}\label{3D-H}
\left\{\begin{aligned}
&\operatorname{div}\left(\varrho(S, B,\mathbf{q})\mathbf{q}\right)=0,\\
&-\Delta(\psi\mathbf{e}_{\theta})=G(S, B, \Lambda,\partial_r S,\partial_rB,\partial_r\Lambda,\mathbf{t},\nabla\varphi)\mathbf{e}_{\theta},\\
&\varrho(S, B,\mathbf{q})\mathbf{q}\cdot\nabla S=0,\\
&\varrho(S, B,\mathbf{q})\mathbf{q}\cdot\nabla B=0,\\
&\varrho(S, B,\mathbf{q})\mathbf{q}\cdot\nabla \Lambda=0,
\end{aligned}
\right.
\end{equation}
with
\begin{equation*}
\mathbf{q}=\mathbf{q}(r,\psi,D\psi,D\varphi,\Lambda),\quad\text{and}\quad \mathbf{t}=\mathbf{t}(r,\psi,D\psi,\Lambda),
\end{equation*}
for $(\varrho, G)$ defined by
\begin{equation}\label{def-H-G}
\left.\begin{aligned}
&\varrho(\eta,\zeta,\mathbf{q}):=\left[\frac{\gamma-1}{\gamma\eta}\left(\zeta-\frac{1}{2}|\mathbf{q}|^2\right)\right]^{1/(\gamma-1)},\\
&G(\eta_1,\eta_2,\eta_3,\eta_4,\eta_5,\eta_6,\mathbf{t},\mathbf{v}):=\frac{1}{(\mathbf{t}+\mathbf{v})\cdot\mathbf{e}_{x_1}}\left(-\eta_5+\frac{\varrho^{\gamma-1}(\eta_1, \eta_2, \mathbf{t}+\mathbf{v})}{\gamma-1}\eta_4+\frac{\eta_3}{r^2}\eta_6\right),\\
\end{aligned}
\right.
\end{equation}
for $\eta,\zeta\in\mathbb{R}$, $\mathbf{q}\in\mathbb{R}^3$, $\eta_k\in\mathbb{R}$ for $k=1, ..., 6$, and $\mathbf{t},\mathbf{v}\in\mathbb{R}^3$.
\smallskip

Next, we derive boundary conditions for $(g_D,S, B, \Lambda,\varphi,\psi)$.

(i) On $\Gamma_\mathrm{en}^+$ we prescribe
\begin{equation}\label{def-varphi-en}
\begin{cases}
(S, B, \Lambda)(0,r)=(S_\mathrm{en},B_0^+,r\beta_\mathrm{en})\\
\varphi(0,r)=\displaystyle\int_{\frac{1}{2}}^ru_r^\mathrm{en}(t)dt=:\varphi_\mathrm{en}(r)\\
\partial_{x_1}\psi(0,r)=0
\end{cases}\quad\text{on $\Gamma_\mathrm{en}^+$}
\end{equation}
so that the boundary conditions given in \eqref{Prob2-BC-ent} hold on $\Gamma_\mathrm{en}^+$.

(ii) On $\Gamma_\mathrm{w}$ we prescribe $$\nabla \varphi\cdot\mathbf{n}_R=0, \quad \frac{1}{r}\nabla(r\psi)\cdot \boldsymbol{\tau}_R=0 \quad \text{on } \Gamma_\mathrm{w}$$ where $$\mathbf{n}_R:=\frac{-R'(x_1)\mathbf{e}_{x_1}+\mathbf{e}_r}{\sqrt{1+|R'(x_1)|^2}}, \quad \boldsymbol{\tau}_R:=\frac{\mathbf{e}_{x_1}+R'(x_1)\mathbf{e}_r}{\sqrt{1+|R'(x_1)|^2}},$$ so that the boundary conditions given in \eqref{Prob2-BC-ent} hold on $\Gamma_\mathrm{w}$.

(iii) Free boundary conditions for $\Gamma_{g_D}$: If a contact discontinuity $\Gamma_{g_D}$ is represented as $$\Gamma_{g_D}=\{\mathbf{x}\in \mathcal{N}: r=g_D(x_1)\},$$ then the unit normal $\mathbf{n}_{g_D}$ of $\Gamma_{g_D}$ pointing toward $\{r>g_D(x_1)\}$ is given by
\begin{equation*}
\mathbf{n}_{g_D}=\frac{-g_D'(x_1)\mathbf{e}_{x_1}+\mathbf{e}_r}{\sqrt{1+|g_D'(x_1)|^2}}.
\end{equation*}
Therefore, if $g_D:\mathbb{R}^+\longrightarrow (0,1)$ solves the initial value problem
\begin{equation}\label{g-free-cond}
\left\{\begin{aligned}
&g_D'(x_1)=\frac{\mathbf{q}(r,\psi,D\psi,D\varphi,\Lambda)\cdot\mathbf{e}_r}{\mathbf{q}(r,\psi,D\psi,D\varphi,\Lambda)\cdot\mathbf{e}_{x_1}}(x_1,g_D(x_1),0)\quad\mbox{for}\quad x_1>0,\\
&g_D(0)=\frac{1}{2},
\end{aligned}\right.
\end{equation}
then $\mathbf{u}\cdot\mathbf{n}_{g_D}=0$ on $\Gamma_{g_D}$ for $\mathbf{u}$. We use \eqref{g-free-cond} to determine the location of the contact discontinuity $r=g_D(x_1)$.

An orthonormal basis for the tangent plane $\Gamma_{g_D}$ at any point on $r=g_D(x_1)$ is given by
$$\left\{{\boldsymbol{\tau}}_{g_D}, \mathbf{e}_{\theta}\right\}\quad\mbox{for}\quad {\boldsymbol{\tau}}_{g_D}:=\frac{\mathbf{e}_{x_1}+g_D'(x_1)\mathbf{e}_r}{\sqrt{1+|g_D'(x_1)|^2}}.$$ Then, it follows from the condition $\mathbf{u}\cdot\mathbf{n}_{g_D}=0$ on $\Gamma_{g_D}$ that
\begin{equation}\label{ab-u}
|\mathbf{u}|^2=|\mathbf{u}\cdot\boldsymbol{\tau}_{g_D}|^2+|\mathbf{u}\cdot\mathbf{e}_{\theta}|^2\quad\mbox{on}\quad\Gamma_{g_D}.
\end{equation}
By substituting the expression \eqref{3D-u} into \eqref{ab-u}, we get
\begin{equation}\label{3D-u2}
|\mathbf{u}|^2=\left|\left[\left(\partial_{x_1}\varphi+\frac{1}{r}\partial_r(r\psi)\right)\mathbf{e}_{x_1}+(\partial_r\varphi-\partial_{x_1}\psi)\mathbf{e}_r\right]\cdot\boldsymbol{\tau}_{g_D}\right|^2+\left|\frac{\Lambda}{r}\right|^2\mbox{ on }\Gamma_{g_D}.
\end{equation}
Since the contact discontinuity is a streamline issuing from the point $r=\frac12$ at the entrance, the transport equation for $B$ yields
$$B=B_0^+\!\left(\frac12\right)
\quad\text{on}\quad\Gamma_{g_D}.$$
Thus $\mathbf{u}$ should satisfy
\begin{equation}\label{3D-u22}
\begin{aligned}
|\mathbf{u}|^2
&=2\left(B_0^+(\tfrac{1}{2})-\frac{\gamma p^{1-1/\gamma}S^{1/\gamma}}{\gamma-1}\right)\quad\mbox{on}\quad\Gamma_{g_D}.
\end{aligned}
\end{equation}
Therefore, if $(\varphi, \psi)$ satisfies
\begin{equation*}
\nabla\varphi\cdot\boldsymbol{\tau}_{g_D}=\nabla\varphi_0\cdot\boldsymbol{\tau}_{g_D}\quad\mbox{and}\quad\frac{1}{r}\nabla(r\psi)\cdot\mathbf{n}_{g_D}=\mathcal{A}(g_D,g_D',S, B, \Lambda)
\end{equation*}
for $\varphi_0$ and $\mathcal{A}$ defined by
\begin{equation}\label{def-varphi0-B}
\left.\begin{aligned}
&\varphi_0(\mathbf{x}):=u_0x_1\quad\mbox{for}\quad\mathbf{x}=(x_1,x_2,x_3)\in\overline{\mathcal{N}_{g_D}^+},\\
&\mathcal{A}(g_D,g_D',S, B, \Lambda):=\sqrt{2\left(B_0^+(\tfrac12)-\frac{\gamma (p_0^-)^{1-1/\gamma}S^{1/\gamma}}{\gamma-1}\right)-\left(\frac{\Lambda}{g_D}\right)^2}-\nabla\varphi_0\cdot\boldsymbol{\tau}_{g_D},
\end{aligned}\right.
\end{equation}
then one can directly check from \eqref{ab-u}--\eqref{3D-u22} that the Rankine--Hugoniot condition $p=p_0^-$ on $\Gamma_{g_D}$ given in \eqref{Prob2-BC-ent} holds for $p=S\varrho^\gamma(S,B,\mathbf{u})$.

We collect all the boundary conditions for $(g_D,S, B, \Lambda,\varphi,\psi)$ with \eqref{g-free-cond} as follows:
\begin{equation}\label{3D-BC-C}
\left\{
\begin{aligned}
(S, B, \Lambda)=(S_\mathrm{en},B_0^+,r\beta_\mathrm{en}),\quad\varphi=\varphi_\mathrm{en},\quad\partial_{x_1}\psi=0\quad&\mbox{on}\quad\Gamma_\mathrm{en}^+,\\
\nabla \varphi\cdot\mathbf{n}_R=0, \quad \frac{1}{r}\nabla (r\psi)\cdot\boldsymbol{\tau}_R=0\quad&\mbox{on}\quad\Gamma_\mathrm{w},\\
\nabla\varphi\cdot\boldsymbol{\tau}_{g_D}=\nabla\varphi_0\cdot\boldsymbol{\tau}_{g_D},\quad \frac{1}{r}\nabla(r\psi)\cdot\mathbf{n}_{g_D}=\mathcal{A}(g_D,g_D',S, B, \Lambda)\quad&\mbox{on}\quad\Gamma_{g_D}.
\end{aligned}\right.
\end{equation}
We are thus led to the following reformulated free boundary problem for $(g_D,S,B,\Lambda,\varphi,\psi)$, which will be the main formulation in the rest of the paper.

\begin{theorem}\label{3D-Thm-HD}
Let $\alpha\in(0,1)$ be fixed. Under the same assumptions as in Theorem~\ref{3D-MainThm}, there exist a positive constant $\beta_1>0$ and a small constant $\sigma_3>0$ depending only on the data such that if
\begin{itemize}[label=--]
\item $|\beta_0|\le\beta_1$;
\item $\sigma\le\sigma_3$,
\end{itemize}
then the free boundary problem \eqref{3D-H} with boundary conditions \eqref{g-free-cond} and \eqref{3D-BC-C} has a solution $(g_D, S, B, \Lambda, \varphi, \psi)$ that satisfies
\begin{equation}\label{Thm-HD-est}
\begin{aligned}
\left\|g_D-\frac{1}{2}\right\|_{2,\alpha,\mathbb{R}^+}&\le C\sigma,\\
\|\varphi-\varphi_0\|_{2,\alpha,\mathcal{N}^+_{g_D}}+\|\psi\|_{2,\alpha,\mathcal{N}^+_{g_D}}+\|(S, B, \Lambda)-(S_0^+, B_0^+,\beta_0r)\|_{1,\alpha,\mathcal{N}^+_{g_D}}&\le C\sigma,
\end{aligned}
\end{equation}
where the constant $C>0$ depends only on the data.
\end{theorem}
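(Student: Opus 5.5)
We prove Theorem~\ref{3D-Thm-HD} by solving the problem first in cut-off domains $\mathcal{N}^{L}:=\mathcal{N}\cap\{x_1<L\}$ with estimates uniform in $L$, and then letting $L\to\infty$. On the artificial exit $\{x_1=L\}$ we impose $\partial_{x_1}\varphi=u_0$ and $\partial_{x_1}\psi=0$. These conditions are chosen to be compatible with the background state, and with the reflection structure used later to obtain $L$-independent bounds.

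\textbf{Flattening and iteration set.} The free boundary is flattened by the change of variables
\[
r\mapsto \frac12+\Bigl(r-g_D(x_1)\Bigr)\frac{R(x_1)-\frac12}{R(x_1)-g_D(x_1)},
\]
so that $\mathcal{N}_{g_D}^+\cap\{x_1<L\}$ is mapped onto the fixed domain $\{0<x_1<L,\ \frac12<r<R(x_1)\}$. We then define an iteration set $\mathcal{K}$ consisting of triples $(\tilde g,\tilde\varphi,\tilde\psi)$ with
\[
\|\tilde g-\tfrac12\|_{2,\alpha}+\|\tilde\varphi-\varphi_0\|_{2,\alpha}+\|\tilde\psi\|_{2,\alpha}\le M\sigma,
\]
where $M$ is a constant to be fixed later. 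Near the compatibility corners ($\Gamma_\mathrm{en}\cap\Gamma_{g_D}$ and $\Gamma_\mathrm{en}\cap\Gamma_\mathrm{w}$) we use weighted norms if needed. Because $\psi\mathbf{e}_\theta$ must be regular, the wall conditions $\frac1r\nabla(r\psi)\cdot\boldsymbol{\tau}_R=0$ and the $\psi$-equation are handled in the form $-\Delta(\psi\mathbf{e}_\theta)$, which includes the term $\psi/r^2$. Since $r\ge g_D\approx\frac12$ in the outer layer, the axis singularity does not appear here.

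\textbf{Steps for a given element of $\mathcal{K}$.}

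(1) \emph{Transport.} Given $\tilde{\mathbf q}$, we define the stream function $w$ of the two-dimensional field $r\varrho\,(\tilde q_{x_1},\tilde q_r)$. Since $\tilde q_{x_1}\approx u_0>0$, $w$ is invertible along streamlines. We set
\[
(S,B,\Lambda)(x_1,r)=(S_\mathrm{en},B_0^+,r\beta_\mathrm{en})\bigl(\mathcal{L}^{-1}(w(x_1,r))\bigr),
\]
where $\mathcal{L}$ labels the streamlines at the entrance. Estimating the difference from $(S_0^+,B_0^+,\beta_0 r)$ gives, schematically,
\[
\|(S,B,\Lambda)-(S_0^+,B_0^+,\beta_0r)\|_{1,\alpha}\le C\bigl(\sigma+\beta_0^2 M\sigma\bigr).
\]
The factor $\beta_0^2$ arises because $\partial_rS_0^+$, $\partial_rB_0^+$ and $\partial_r(\beta_0r)$ are all $O(\beta_0^2)$ or $O(\beta_0)$, and they are multiplied by the streamline displacement, which is $O(M\sigma)$. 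This is the content of Lemma~\ref{Pro-trans}.

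(2) \emph{Elliptic problem for $\psi$.} Next we solve the linear problem for $\psi$: the equation $-\Delta(\psi\mathbf e_\theta)=G\,\mathbf e_\theta$, with the boundary data from \eqref{3D-BC-C}. On $\Gamma_{g_D}$ the condition is of oblique/Neumann type, with data $\mathcal{A}$; we check that $\mathcal{A}$ is $O(\sigma+\beta_0\cdot\text{perturbation})$ because the background satisfies it exactly. Since $G$ vanishes at the background, we get $\|G\|_{\alpha}\le C(\sigma+|\beta_0|M\sigma)$. From this we obtain a $C^{2,\alpha}$ estimate for $\psi$.

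(3) \emph{Nonlinear problem for $\varphi$.} We linearize $\operatorname{div}(\varrho\mathbf q)=0$ around the background. Its principal part is uniformly elliptic exactly under the pseudo-subsonic condition \eqref{def-pseudo-subsonic}, by Lemma~\ref{lem-pseudosubsonic}. The boundary conditions are mixed: Dirichlet on $\Gamma_\mathrm{en}^+$ and along $\Gamma_{g_D}$ (coming from $\nabla\varphi\cdot\boldsymbol\tau_{g_D}$ given, which integrates to Dirichlet data), and Neumann on $\Gamma_\mathrm{w}$ and on the exit. The lower-order terms generated by the $r$-dependence of the background are controlled using smallness of $|\beta_0|$. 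We derive the $L$-independent estimate first by an $H^1$ energy estimate, using a Poincaré inequality in $r$ across the strip enabled by the Dirichlet condition on $\Gamma_{g_D}$. We then upgrade to weighted Schauder estimates by local estimates on unit cubes together with the even reflection across the exit.

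(4) \emph{Update of the free boundary.} Finally we update the free boundary through \eqref{g-free-cond}, obtaining $\|g-\frac12\|_{2,\alpha}\le C\sigma$ from the $C^{1,\alpha}$ bounds on $\mathbf q$.

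\textbf{Closing the argument.} Collecting the estimates of steps (1)--(4) gives a bound of the form $C_*(1+|\beta_0|M+M\sigma)\sigma$. We choose $M=2C_*$, then $\beta_1$ small so that $C_*|\beta_0|M\le\frac14M$, and finally $\sigma_3$ small. With these choices the iteration map sends $\mathcal{K}$ into itself. We conclude by the Schauder fixed point theorem: $\mathcal K$ is convex and compact in $C^{2,\alpha/2}$, and the map is continuous there, with continuity following from uniqueness of the linear problems. Then we let $L\to\infty$ via Arzel\`a--Ascoli, using the uniform bounds, to obtain \eqref{Thm-HD-est}.

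\textbf{Main obstacle.} I expect the hardest step to be the uniform-in-$L$ estimate for $\varphi$, combined with the transport error. The error term $\beta_0^2\times$displacement is not small relative to $M\sigma$ unless $\beta_0$ is small, and it feeds back through $G$ and $\mathcal A$. To handle it, I would first try to absorb it into $M$ via the explicit smallness condition on $\beta_1$, as described in the closing step above.
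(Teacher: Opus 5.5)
Your overall architecture matches the paper's: cut-off domains, estimates uniform in $L$, the transport error $(|\beta_0|+\beta_0^2)M\sigma$ absorbed by smallness of $\beta_0$, pseudo-subsonic ellipticity, Schauder's theorem, then Arzel\`a--Ascoli. However, step (4) does not give what the closing argument needs. Integrating \eqref{g-free-cond} from $x_1=0$ controls $g'$ in $C^{1,\alpha}$. For the sup-norm it only gives $|g(x_1)-\tfrac12|\le\int_0^{x_1}|q_r/q_{x_1}|\,dt\le CM\sigma x_1$, which grows linearly in $L$. No decay of $q_r$ is available at this stage; that is the content of Theorem~\ref{3D-MainThm}(b), proved afterwards. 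So $\mathcal K$ is not mapped into itself with an $L$-independent $M$, and the limit $L\to\infty$ breaks down.

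The missing idea is to determine the interface by conservation of mass flux rather than by the ODE. The paper defines the new interface through \eqref{3D-f-est1}--\eqref{3D-f-est3}, essentially
\[
\int_{g(x_1)}^{R(x_1)}t\,\rho u_{x_1}(x_1,t)\,dt=\int_{1/2}^{1}t\,\rho u_{x_1}(0,t)\,dt .
\]
Since $\rho u_{x_1}=\rho_0^+u_0+O(\sigma)$ and $R=1+O(\sigma)$, this gives $g^2=R^2-\tfrac34+O(\sigma)$, hence $|g-\tfrac12|\le C\sigma$ uniformly in $L$. Differentiating in $x_1$ and using \eqref{conti-u} together with the slip condition $R'u_{x_1}=u_r$ on the wall recovers \eqref{g-free-cond}. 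That last step needs the continuity equation to hold exactly for the velocity used. This is why the paper first solves the nonlinear fixed-boundary problem for $(\varphi,\psi)$ by contraction (Lemma~\ref{Pro-fix-S}) and only then updates $f$. In your single loop it holds at the fixed point, which suffices once you use the flux formula.

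The same ordering issue affects step (1). For a generic element of $\mathcal K$, the field $r\varrho(\tilde q_{x_1},\tilde q_r)$ is not divergence-free, so it has no stream function. You can define $w$ by radial integration from the wall as in \eqref{def-w}, but you must also extend $\mathcal G^{-1}$ beyond its natural range. Moreover, $(S,B,\Lambda)$ then solve the transport equations only at the fixed point. Likewise $S=S_\mathrm{en}(\tfrac12)$ and $\Lambda=\Lambda_\mathrm{en}(\tfrac12)$ on $\Gamma_{g_D}$, which is what makes $\mathcal A$ encode $p=p_0^-$, hold only there. The paper avoids this by nesting: it fixes $\mathcal W_*$, solves the whole free boundary problem (Lemma~\ref{Lem-S-free}), and then transports with a genuinely divergence-free flux whose lower boundary is a streamline.

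Two further points need attention. First, if you resort to weighted norms at the corners $\Gamma_\mathrm{en}\cap\Gamma_\mathrm{w}$ and $\Gamma_\mathrm{en}\cap\Gamma_{g_D}$, you will not obtain the unweighted bounds \eqref{Thm-HD-est}. The paper gets those from the compatibility assumptions ($R'(0)=0$, and $u_r^\mathrm{en}\equiv0$ near $r=\tfrac12,1$). These are built into the iteration sets \eqref{Ent-Ang-set}, \eqref{F-set}, \eqref{ell-set} as $f'(0)=f'(L)=0$, $\partial_{x_1}\zeta\equiv0$ and $\partial_{x_1}^2\phi\equiv0$ on $\Gamma_\mathrm{en}^\epsilon$, which permits reflection across $x_1=0$ and $x_1=L$. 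Second, a global $H^1$ bound for $\varphi$ scales like $L$, so uniformity requires a localized (Saint-Venant type) energy argument. The paper instead uses the explicit barrier $\mathfrak M$.
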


We first prove Theorem~\ref{3D-Thm-HD} then apply it to prove Theorem~\ref{3D-MainThm}.
We prove Theorem~\ref{3D-Thm-HD} by a limiting argument. To this end, we introduce a free boundary problem in a cut-off domain of finite length $L$, and solve it by the method of iteration in Section~\ref{3D-sec-Cut}.
Uniform estimates of the solutions to the free boundary problems in cut-off domains are established independently of the length $L$. In Section~\ref{5-1}, we prove Theorem~\ref{3D-Thm-HD} by taking a sequence of the solutions to the free boundary problems in cut-off domains then passing to the limit $L\rightarrow\infty$.
The limit yields a solution to the free boundary problem \eqref{3D-H} with boundary conditions \eqref{g-free-cond} and \eqref{3D-BC-C}. Then we can prove that $(g_D, \rho, \mathbf{u}, p)$ yields a solution to Problem~\ref{3D-Problem2}. This proves Theorem~\ref{3D-MainThm}(a).
Finally, Theorem~\ref{3D-MainThm}(b) is proved by using the stream function formulation and energy estimates.

\section{Free boundary problems in cut-off domains}\label{3D-sec-Cut}

\subsection{Iteration framework}
Let $\mathcal{N}$ be given by \eqref{definition-domain}.
For $L>L_0+10$, define $\mathcal{N}_L$ by
\begin{equation*}
\mathcal{N}_L:=\mathcal{N}\cap\{0<x_1<L\}.
\end{equation*}
For functions $R:[0, L]\to (\frac12, \frac32),\,f:[0,L]\rightarrow(0,1)$, set
\begin{equation*}
\begin{aligned}
&\mathcal{N}_{L,f}^+:=\mathcal{N}_L\cap\{r>f(x_1)\},\quad \Gamma_\mathrm{w}^{L}:=\partial \mathcal{N}_{L}\cap \{ r=R(x_1)\},\\
&\Gamma^{L,f}_\mathrm{ex}:=\partial\mathcal{N}_{L,f}^+\cap\{x_1=L\},\quad
\Gamma_\mathrm{cd}^{L,f}:=\partial\mathcal{N}_{L,f}^+\cap\{r=f(x_1)\}.
\end{aligned}
\end{equation*}

\begin{problem}\label{Prob3-Cut} Find a solution $(f,S, B, \Lambda,\varphi,\psi)$ of the following free boundary problem:
\begin{equation*}\label{3D-Cut-Eq}
\eqref{3D-H}\quad\mbox{in}\quad\mathcal{N}_{L,f}^+
\end{equation*}
 with boundary conditions
\begin{equation*}
\left\{\begin{aligned}
(S, B, \Lambda)=(S_\mathrm{en},B_0^+,r\beta_\mathrm{en}),\quad\varphi=\varphi_\mathrm{en},\quad\partial_{x_1}\psi=0\quad&\mbox{on}\quad\Gamma_\mathrm{en}^+,\\
\nabla \varphi\cdot\mathbf{n}_R=0, \quad \psi=0\quad&\mbox{on}\quad\Gamma_\mathrm{w}^L,\\
\varphi=\varphi_0(L,\cdot),\quad\partial_{x_1}\psi=0\quad&\mbox{on}\quad\Gamma^{L,f}_\mathrm{ex},\\
\nabla\varphi\cdot\boldsymbol{\tau}_f=\nabla\varphi_0\cdot\boldsymbol{\tau}_f,\quad\frac{1}{r}\nabla(r\psi)\cdot\mathbf{n}_{f}=\mathcal{A}(f,f',S, B, \Lambda)\quad&\mbox{on}\quad\Gamma_\mathrm{cd}^{L,f},
\end{aligned}\right.
\end{equation*}
and
\begin{equation}\label{g-free-cut}
\left\{\begin{aligned}
&f'(x_1)=\frac{\mathbf{q}(r,\psi,D\psi,D\varphi,\Lambda)\cdot\mathbf{e}_r}{\mathbf{q}(r,\psi,D\psi,D\varphi,\Lambda)\cdot\mathbf{e}_{x_1}}(x_1,f(x_1),0)\quad\mbox{for}\quad x_1>0,\\
&f(0)=\frac{1}{2},
\end{aligned}\right.
\end{equation}
where
$$\boldsymbol{\tau}_f:=\frac{\mathbf{e}_{x_1}+f'(x_1)\mathbf{e}_r}{\sqrt{1+|f'(x_1)|^2}},\quad \mathbf{n}_f:=\frac{-f'(x_1)\mathbf{e}_{x_1}+\mathbf{e}_r}{\sqrt{1+|f'(x_1)|^2}},$$ where $\mathcal{A}$ and $\varphi_0$ are given in \eqref{def-varphi0-B}.
\end{problem}

\begin{proposition}\label{3D-Prop4.1}
Let $\alpha\in(0,1)$ be fixed. Under the same assumptions as in Theorem~\ref{3D-MainThm}, there exist a positive constant $\beta_2>0$ and a small constant $\sigma_4>0$ depending only on the data such that if
\begin{itemize}[label=--]
\item $|\beta_0|\le\beta_2$;
\item $\sigma\le\sigma_4$,
\end{itemize}
then Problem~\ref{Prob3-Cut} has a unique solution $(f,S, B, \Lambda,\varphi,\psi)$ that satisfies
\begin{equation}\label{3D-Prop-est}
\begin{aligned}
\left\|f-\frac{1}{2}\right\|_{2,\alpha,(0,L)}&\le C\sigma,\\
\|\varphi-\varphi_0\|_{2,\alpha,\mathcal{N}_{L,f}^+}+\|\psi\|_{2,\alpha,\mathcal{N}_{L,f}^+}+\|(S, B, \Lambda)-(S_0^+,B_0^+,\beta_0r)\|_{1,\alpha,\mathcal{N}_{L,f}^+}&\le C\sigma,
\end{aligned}
\end{equation}
where the constant $C>0$ depends only on the data but not on $L$.
\end{proposition}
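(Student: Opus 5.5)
We will prove the proposition by a fixed-point iteration in the spirit of \cite{bae2018contact,bae2019contact}. We decouple the elliptic part for $(\varphi,\psi)$ from the transport part for $(S,B,\Lambda)$ and the free boundary $f$.

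\textbf{Iteration set.} For a constant $M>0$ to be chosen, let $\mathcal{K}_M$ be the set of $(\tilde f,\tilde S,\tilde B,\tilde\Lambda)$ satisfying $\tilde f(0)=\frac12$ and $\|\tilde f-\frac12\|_{2,\alpha,(0,L)}\le M\sigma$. The transported quantities are to satisfy $\|(\tilde S,\tilde B,\tilde\Lambda)-(S_0^+,B_0^+,\beta_0 r)\|_{1,\alpha}\le M\sigma$, where the norm is taken on the domain $\mathcal{N}_{L,\tilde f}^+$. This domain is pulled back to the fixed domain $\mathcal{N}_{L,1/2}^+$ by the map $(x_1,r)\mapsto\bigl(x_1,\tilde f(x_1)+\frac{R(x_1)-\tilde f(x_1)}{R(x_1)-1/2}(r-\tfrac12)\bigr)$, so that all functions live on a common domain.

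\textbf{Step 1 (elliptic problem).} Fix an element of $\mathcal{K}_M$ and solve the boundary value problem for $(\varphi,\psi)$ on $\mathcal{N}_{L,\tilde f}^+$ with the data frozen. We write $\varphi=\varphi_0+\phi$, linearize $\operatorname{div}(\varrho\mathbf q)=0$ about the background, and keep the nonlinear remainders as sources.
\begin{itemize}
\item The principal part of the $\phi$-equation is uniformly elliptic exactly under \eqref{def-pseudo-subsonic}; this is Lemma~\ref{lem-pseudosubsonic}. Its boundary conditions are Dirichlet on $\Gamma_{\rm en}^+$, $\Gamma_{\rm ex}^{L,\tilde f}$ and (after integrating the tangential condition from the entrance) on $\Gamma_{\rm cd}$, and oblique on $\Gamma_{\rm w}^L$.
\item The $\psi$-equation $-\Delta(\psi\mathbf e_\theta)=G\,\mathbf e_\theta$ is solved with a Neumann-type condition $\frac1r\nabla(r\psi)\cdot\mathbf n=\mathcal A$ on $\Gamma_{\rm cd}$. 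This is handled, as in \cite{bae2019contact}, by viewing $\psi\mathbf e_\theta$ as an axisymmetric vector field in $\mathbb R^5$ so that the operator becomes a Laplacian. The right-hand side $G$ involves $\partial_r\tilde S,\partial_r\tilde B,\partial_r\tilde\Lambda$, which contain background parts of size $O(\beta_0^2)$; we treat these by linearizing $G$ about the background.
\end{itemize}
We obtain $C^{2,\alpha}$ estimates that are uniform in $L$. These come from weighted $H^1$ energy estimates, where the Poincar\'e inequality in $r$ on the annulus gives $L$-independent control, followed by local Schauder estimates on unit-length subdomains. The corners are handled by reflection, using $R'(0)=0$ and the compatibility built into \eqref{def-entrance-ep}. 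Uniqueness of the linearized problem comes from the same energy estimate, and the nonlinear coupling of $\phi$ and $\psi$ is absorbed by a contraction for small $\sigma$.

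\textbf{Step 2 (free boundary and transport).} With $(\varphi,\psi)$ in hand, define the new $f$ by solving the ODE \eqref{g-free-cut}. The quotient $\mathbf q\cdot\mathbf e_r/\mathbf q\cdot\mathbf e_{x_1}$ is $O(\sigma)$ in $C^{1,\alpha}$, so $\|f-\frac12\|_{2,\alpha}\le C\sigma$. Next solve the transport equations along the streamlines of $\mathbf q$. Via the stream function $r\varrho\,\mathbf q$ (divergence free in $(x_1,r)$ with weight $r$), each point is traced back to an entrance radius $r_0(x_1,r)$, and we set $(S,B,\Lambda)=(S_{\rm en},B_0^+,r\beta_{\rm en})\circ r_0$. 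Since the background satisfies $S_0^+(r)-S_0^-=O(\beta_0^2)$ and $\partial_r B_0^+,\partial_r(\beta_0r)=O(|\beta_0|+\beta_0^2)$, writing $S-S_0^+=(S_{\rm en}-S_0^+)\circ r_0+S_0^+(r_0)-S_0^+(r)$ gives, as in Lemma~\ref{Pro-trans},
\[
\|(S,B,\Lambda)-(S_0^+,B_0^+,\beta_0r)\|_{1,\alpha}\le C\sigma+C(|\beta_0|+\beta_0^2)\|r_0-r\|_{1,\alpha}\le C\sigma+C(|\beta_0|+\beta_0^2)C_1M\sigma .
\]

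\textbf{Closing the argument (the main obstacle).} The main obstacle is closing the estimate. The factor $M$ reappears multiplied by $(|\beta_0|+\beta_0^2)$, and also through the $\beta_0$-dependent terms of $G$ in Step 1, so we can choose $M$ large and then $\beta_2$ small so that $C(|\beta_0|+\beta_0^2)C_1M\le M/2$. This is exactly why $|\beta_0|\le\beta_2$ is imposed, and the transport term is the step we would check most carefully for any loss of derivatives. Here $(S,B,\Lambda)$ is only $C^{1,\alpha}$ while $G$ needs $D(S,B,\Lambda)\in C^{\alpha}$, which is consistent. Finally, $\sigma_4$ is taken small. The map $\mathcal J:\mathcal K_M\to\mathcal K_M$ is then well defined. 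We show it is a contraction in the weaker norm $C^{1,\alpha}\times C^{\alpha}$ by estimating differences through the same linear estimates, which again requires $\beta_0$ and $\sigma$ small; $\mathcal K_M$ is compact in that norm and the weak norm is lower semicontinuous, so $\mathcal J$ has a fixed point. This fixed point solves Problem~\ref{Prob3-Cut} with estimate \eqref{3D-Prop-est}. Uniqueness in the class \eqref{3D-Prop-est} follows from the same contraction estimate.
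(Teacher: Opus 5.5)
Your Step 2 has a genuine gap. Updating the interface by integrating \eqref{g-free-cut} from $x_1=0$ does not give an $L$-independent bound. The slope $\mathbf q\cdot\mathbf e_r/\mathbf q\cdot\mathbf e_{x_1}$ is $O(\sigma)$ in $C^{1,\alpha}$, and that controls $f'$. The zeroth-order part, however, only satisfies $|f(x_1)-\frac12|\le\int_0^{x_1}|f'|\,dt\le CL\sigma$. Since $M$, $\beta_2$, $\sigma_4$ and $C$ must not depend on $L$ (otherwise the limit $L\to\infty$ in Section~\ref{5-1} is unavailable), $\mathcal J$ does not map $\mathcal K_M$ into itself. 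The same $L$-growth (Gronwall along the ODE) enters your difference estimates for $f$, so the weak-norm contraction is not uniform in $L$ either.

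There is also a domain mismatch in your joint iteration. The frozen flow lives on $\mathcal N^+_{L,\tilde f}$, and $\tilde f$ is not a streamline of it: on $\Gamma^{L,\tilde f}_{\mathrm{cd}}$ only $\phi=0$ and the conormal condition for $\psi$ are imposed, so the normal velocity is not zero there. Hence the trajectory issuing from $(0,\frac12)$ may leave the region where $\mathbf q$ is defined. Your stream-function representation $(S_{\mathrm{en}},B_0^+,r\beta_{\mathrm{en}})\circ r_0$ is exactly what makes the transport estimate $L$-independent. It requires the lower boundary to be a streamline of the divergence-free flux $r\varrho\mathbf q$; otherwise $r_0$ can drop below $\frac12$, outside the entrance data.

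The missing idea is the one the paper uses. For frozen $\mathcal W_*$ and $f_*$, the interface is updated through the mass-flux identity \eqref{3D-f-est1}, i.e.\ \eqref{3D-f-est3}. This expresses $f(x_1)^2$ through cross-sectional integrals of $t\rho^*\mathbf u^*\cdot\mathbf e_{x_1}$ at $x_1$ and at the entrance. Since $\rho^*\mathbf u^*\cdot\mathbf e_{x_1}=\rho_0^+u_0+O((\delta_1+1)\sigma)$, it gives $f^2=R^2-\frac34+O((\delta_1+1)\sigma)$ pointwise in $x_1$, an $L$-independent bound. At a fixed point $f=f_*$, differentiating the identity and using $\partial_{x_1}(r\rho u_{x_1})+\partial_r(r\rho u_r)=0$ together with the slip condition on $\Gamma^L_{\mathrm w}$ returns \eqref{g-free-cut}.

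The paper also nests the iteration. Lemma~\ref{Lem-S-free} solves the free boundary problem for each frozen $\mathcal W_*$, so $f$ is a genuine streamline. Only then is \eqref{Ite-3D2} solved in $\mathcal N^+_{L,f}$ via $\mathcal R_0=\mathcal G^{-1}\circ w$. The $C^0$ part of the uniqueness estimate for $f$ again comes from the flux identity \eqref{f12-rhou}.

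Two smaller repairs are also needed:
\begin{itemize}
\item Your iteration sets must carry the compatibility conditions $\tilde f'(0)=\tilde f'(L)=0$ and $\partial_{x_1}\tilde{\mathcal W}\equiv0$ on $\Gamma^\epsilon_{\mathrm{en}}\cup\Gamma^{L,1/4}_{\mathrm{ex}}$, and you must verify they are preserved. The reflection argument at the corners depends on them.
\item A single constant $M$ for both $\tilde f$ and $\tilde{\mathcal W}$ cannot close, because the new interface only satisfies $\|f-\frac12\|_{2,\alpha}\le C(M+1)\sigma$. Use two constants, as the paper does with $\delta_1$ and $\delta_2=C_{\star\star}(\delta_1+1)$.
\end{itemize}
Finally, your energy route to $L$-uniform $C^0$ bounds for $\phi,\psi$ must be localized (e.g.\ exponentially weighted), since a global $H^1$ bound grows like $\sqrt L$. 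The paper instead uses the explicit barriers $\mathfrak M$ and $\mathfrak N$.
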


To solve the transport equations for $(S,B,\Lambda)$, we first require $(f,\mathbf q)$ to satisfy \eqref{g-free-cut}. Furthermore, the vector field $\varrho(S, B,\mathbf{q})\mathbf{q}$ must be divergence free; See \cite[Proposition 3.5]{bae20183}.
Therefore, we need to solve a free boundary problem for $(f, \varphi, \psi)$ by fixing approximated entropy, Bernoulli function, and angular momentum density $(\tilde{S},\tilde{B}, \tilde{\Lambda})$, then solve $\varrho(\tilde S,\tilde{B},\mathbf{\tilde q})\mathbf{\tilde q}\cdot\nabla (S, B, \Lambda)=0$ in $\mathcal{N}_{L,f}^+$ to update $(S, B, \Lambda)$, where $\mathbf{\tilde q}$ is given by $\mathbf{\tilde q}=\mathbf{q}(r, \psi, D\psi, D\varphi, \tilde{\Lambda})$.

For fixed constants $\epsilon\in(0,1/10)$ and $\alpha\in(0,1)$, we define iteration sets $\mathcal{U}_r(\zeta_0)$ and $\mathcal{V}_r(\eta_0)$ by
\begin{equation}\label{Ent-Ang-set}
\left.
\begin{aligned}
&\mathcal{U}_r(\zeta_0
):=\left\{\zeta\in C^{1,\alpha}(\overline{\mathcal{N}_{L,1/4}^+})\left|\,
\begin{aligned}
&\zeta\mbox{ is axisymmetric,}\\
&\|\zeta-\zeta_0\|_{1,\alpha,\mathcal{N}_{L,1/4}^+}\le r,\\
&\partial_{x_1}\zeta\equiv0\mbox{ on }\Gamma_\mathrm{en}^\epsilon\cup\Gamma^{L,1/4}_\mathrm{ex}
\end{aligned}\right.\right\},\\
&\mathcal{V}_r(\eta_0
):=\left\{r\eta\in C^{1,\alpha}(\overline{\mathcal{N}_{L,1/4}^+})\left|\,
\begin{aligned}
&\eta\mbox{ is axisymmetric,}\\
&\|\eta-\eta_0\|_{1,\alpha,\mathcal{N}_{L,1/4}^+}\le r,\\
&\partial_{x_1}\eta\equiv0\mbox{ on }\Gamma_\mathrm{en}^\epsilon\cup\Gamma^{L,1/4}_\mathrm{ex}
\end{aligned}\right.\right\},\\
\end{aligned}\right.
\end{equation}
where $\Gamma_\mathrm{en}^\epsilon:=\Gamma_\mathrm{en}^+\setminus\left\{\frac{1}{2}+\epsilon<r<1-\epsilon\right\}$. Then define
$$\mathcal{P}_r:=\mathcal{U}_r(S_0^+)\times\mathcal{U}_r(B_0^+)\times\mathcal{V}_r(\beta_0).$$

\begin{problem}\label{Prob4-Fix-S}
For a constant $\delta_1>0$ to be determined later, let $\mathcal{W}_{\ast}:=(S_{\ast},B_{\ast},\Lambda_{\ast})\in\mathcal{P}_{\delta_1\sigma}$ and set
\begin{equation*}
\mathbf{q}_*:=\mathbf{q}(r,\psi,D\psi,D\varphi,\Lambda_{\ast}),\quad
\mathbf{t}_*:=\mathbf{t}(r,\psi,D\psi,\Lambda_{\ast})
\end{equation*}
for $(\mathbf{q}, \mathbf{t})$ given by \eqref{3D-u} and \eqref{def-T}.
Then, find $(f,\varphi,\psi)$ satisfying \eqref{g-free-cut} and
\begin{equation}\label{S-Free-BP}
\left\{\begin{aligned}
	\left.\begin{aligned}	&\operatorname{div}\left(\varrho(S_{\ast},B_{\ast},\mathbf{q}_*)\mathbf{q}_*\right)=0\\
	&-\Delta(\psi\mathbf{e}_{\theta})=G(S_{\ast},B_{\ast},\Lambda_{\ast},\partial_r S_{\ast},\partial_rB_\ast,\partial_r\Lambda_{\ast},\mathbf{t}_*,D\varphi)\mathbf{e}_{\theta}
	\end{aligned}\right.\quad&\mbox{in}\quad\mathcal{N}_{L,f}^+,\\
\varphi=\varphi_\mathrm{en},\quad\partial_{x_1}\psi=0\quad&\mbox{on}\quad\Gamma_\mathrm{en}^+,\\
\nabla \varphi\cdot\mathbf{n}_R=0, \quad \psi=0\quad&\mbox{on}\quad\Gamma_\mathrm{w}^L,\\
\varphi=\varphi_0(L,\cdot),\quad\partial_{x_1}\psi=0\quad&\mbox{on}\quad\Gamma^{L,f}_\mathrm{ex},\\
\nabla\varphi\cdot\boldsymbol{\tau}_f=\nabla\varphi_0\cdot\boldsymbol{\tau}_f,\quad\frac{1}{r}\nabla(r\psi)\cdot\mathbf{n}_{f}=\mathcal{A}(f,f',S_{\ast},B_{\ast},\Lambda_{\ast})\quad&\mbox{on}\quad\Gamma_\mathrm{cd}^{L,f},
\end{aligned}\right.
\end{equation}
where $\varrho$, $G$, $\varphi_0$, and $\mathcal{A}$ are given by \eqref{def-H-G} and \eqref{def-varphi0-B}.
\end{problem}
\begin{lemma}\label{Lem-S-free}
Under the same assumptions on $(R,S_\mathrm{en},\beta_\mathrm{en},u_r^\mathrm{en})$ as in Proposition~\ref{3D-Prop4.1},
there exist a positive constant $\beta^\star>0$ and a small constant $\sigma_5>0$ depending only on the data and $\delta_1$ so that if
\begin{itemize}[label=--]
\item $|\beta_0|\le\beta^\star$;
\item $\sigma\le\sigma_5$,
\end{itemize}
then, for each $\mathcal{W}_{\ast}\in\mathcal{P}_{\delta_1\sigma}$, Problem~\ref{Prob4-Fix-S} has a unique solution $(f,\varphi,\psi)$ satisfying
\begin{equation}\label{3D-pps-est}
\left\|f-\frac{1}{2}\right\|_{2,\alpha,(0,L)}+\|\varphi-\varphi_0\|_{2,\alpha,\mathcal{N}_{L,f}^+}+\|\psi\|_{2,\alpha,\mathcal{N}_{L,f}^+}\le C\left(\delta_1+1\right)\sigma,
\end{equation}
where the constant $C>0$ depends only on the data but not on $L$.
\end{lemma}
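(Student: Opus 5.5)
We solve Problem~\ref{Prob4-Fix-S} by a nested fixed point argument: an outer iteration on the free boundary $f$ and an inner iteration on $(\varphi,\psi)$ with $f$ frozen.

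\textbf{Frozen-boundary problem.} Fix the iteration set
\[
\mathcal{J}_M:=\Bigl\{f\in C^{2,\alpha}([0,L]):\ f(0)=\tfrac12,\ \|f-\tfrac12\|_{2,\alpha,(0,L)}\le M\sigma\Bigr\}.
\]
For $f\in\mathcal{J}_M$ we solve \eqref{S-Free-BP} in $\mathcal{N}^+_{L,f}$, ignoring the ODE \eqref{g-free-cut}. To remove the dependence on $f$, we flatten the domain by the map $(x_1,r)\mapsto\bigl(x_1,\tfrac12+\tfrac{r-f(x_1)}{R(x_1)-f(x_1)}\cdot(R_\infty\text{-type normalization})\bigr)$, so the unknowns are posed in a fixed cylinder shell. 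We then linearize around $(\varphi_0,0)$.

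\textbf{Equation for $\phi=\varphi-\varphi_0$.} We write the mass equation in the form
\[
\sum_{ij}\partial_i\bigl(a_{ij}(D\phi,\ldots)\partial_j\phi\bigr)+\tfrac1r(\cdots)=\operatorname{div}\mathbf{F}(D\psi,\psi,\mathcal{W}_*-\mathcal{W}_0).
\]
Lemma~\ref{lem-pseudosubsonic} gives uniform ellipticity of the principal part around the background state under \eqref{def-pseudo-subsonic}. The boundary conditions are:
\begin{itemize}
\item a Dirichlet condition on $\Gamma_\mathrm{en}^+$ and on $\Gamma^{L,f}_\mathrm{ex}$;
\item an oblique (Neumann) condition on $\Gamma_\mathrm{w}^L$;
\item on $\Gamma_\mathrm{cd}^{L,f}$, the tangential condition $\nabla\phi\cdot\boldsymbol{\tau}_f=0$, integrated from $(0,\tfrac12)$, which becomes the Dirichlet condition $\phi=\varphi_\mathrm{en}(\tfrac12)=0$.
\end{itemize}

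\textbf{Equation for $\psi$.} The $\psi$-equation is the operator $-\Delta(\psi\mathbf e_\theta)=-(\partial_{x_1}^2+\partial_r^2+\tfrac1r\partial_r-\tfrac1{r^2})\psi$ with:
\begin{itemize}
\item mixed conditions: Neumann $\partial_{x_1}\psi=0$ at the ends, Dirichlet $\psi=0$ on the wall;
\item a Robin-type condition $\partial_{\mathbf n_f}\psi+\tfrac{n_r}{r}\psi=\mathcal A$ on $\Gamma_\mathrm{cd}^{L,f}$.
\end{itemize}
The coefficient $-1/r^2$ has a good sign and $r\ge1/4$, so the Robin problem is coercive. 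Its right-hand side $G$ contains $\partial_r S_*,\partial_rB_*,\partial_r\Lambda_*$. These are not small; they equal background values of size $O(\beta_0^2)$ (e.g.\ $\partial_rB_0^+=\tfrac{\gamma\rho_0^+\beta_0^2}{(\gamma-1)\rho_0^+ r}$, $\partial_r\Lambda_0=\beta_0$). The key check is that the combination $-\eta_5+\tfrac{\varrho^{\gamma-1}}{\gamma-1}\eta_4+\tfrac{\eta_3}{r^2}\eta_6$ vanishes identically at the background: $-\partial_rB_0^++\tfrac{(\rho_0^+)^{\gamma-1}}{\gamma-1}\partial_rS_0^++\tfrac{\beta_0^2}{r}=0$. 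Hence $G=O(\sigma)$ plus terms Lipschitz in $(\psi,D\psi,D\phi)$ with constants $O(|\beta_0|+|\beta_0|^2)$. Similarly $\mathcal A=O(\sigma)$ at the background.

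\textbf{Estimates.} For the linearized decoupled problems we prove $C^{2,\alpha}$ estimates independent of $L$:
\begin{itemize}
\item an $H^1$ energy estimate, using Poincaré in $r$ (Dirichlet on $\Gamma_{\rm cd}$ for $\phi$, on $\Gamma_\mathrm{w}$ for $\psi$) so no constant depends on $L$;
\item local $L^\infty$ bounds, then Schauder estimates on unit cells $\{k<x_1<k+2\}$;
\item at the corners we use the compatibility data: $u_r^{\rm en}$ vanishing near $r=\tfrac12,1$, $R'(0)=0$, $\partial_{x_1}\zeta=0$ on $\Gamma_\mathrm{en}^\epsilon\cup\Gamma_\mathrm{ex}$, together with reflection to obtain $C^{2,\alpha}$ up to the corners.
\end{itemize}
A contraction (Banach fixed point) on a ball of radius $C(\delta_1+1)\sigma$ for $(\phi,\psi)$ gives the unique solution. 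This requires two smallness conditions: the $\beta_0$-dependent Lipschitz constants must be less than $1/(2C)$, which fixes $\beta^\star$; and $\sigma$ must be small, which fixes $\sigma_5$ depending on $\delta_1$.

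\textbf{Outer iteration on $f$.} Given the solution $(\varphi^f,\psi^f)$, we define $\mathcal{T}(f)$ by integrating \eqref{g-free-cut}, i.e.\ $\mathcal{T}(f)(x_1)=\tfrac12+\int_0^{x_1}\tfrac{q_r}{q_{x_1}}(t,f(t))\,dt$.

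The main obstacle is closing this map uniformly in $L$. A naive integral grows linearly in $L$, so we must avoid integrating $q_r$. Instead we use $\mathcal{T}(f)'=q_r/q_{x_1}$, whose $C^{1,\alpha}$ norm is $\le C(\delta_1+1)\sigma$. For $\|\mathcal{T}(f)-\tfrac12\|_{C^0}$ we use mass conservation: the flux $\int_{f}^{R}\varrho q_{x_1}r\,dr$ through each cross-section is constant, and comparing with the background flux bounds $|\mathcal{T}(f)-\tfrac12|\le C(\delta_1+1)\sigma$. With $M=C(\delta_1+1)$, $\mathcal{T}$ maps $\mathcal{J}_M$ into itself.

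To conclude, $\mathcal{J}_M$ is compact in $C^{2,\alpha/2}$ and $\mathcal{T}$ is continuous there, so the Schauder fixed point theorem gives existence. Uniqueness follows from a contraction estimate in a weaker norm ($C^{1,\alpha}$), by comparing two solutions pulled back to the flattened domain. This again uses the smallness of $\sigma$ and $|\beta_0|$.
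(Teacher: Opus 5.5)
The gap is in the outer iteration on $f$, at the step you yourself flag as the main obstacle. Your map $\mathcal{T}(f)(x_1)=\tfrac12+\int_0^{x_1}\frac{q_r}{q_{x_1}}(t,f(t))\,dt$ by itself only gives $|\mathcal{T}(f)-\tfrac12|\le C\sigma L$, and your mass-conservation argument does not repair this.

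For a general $f\in\mathcal{J}_M$, the curve $r=f(x_1)$ is not a streamline of the flow $(\varphi^f,\psi^f)$ computed in $\mathcal{N}_{L,f}^+$. The boundary conditions on $\Gamma^{L,f}_\mathrm{cd}$ do not impose $\mathbf{q}\cdot\mathbf{n}_f=0$; that is exactly the fixed-point condition. So the flux $\Phi(x_1):=\int_{f(x_1)}^{R(x_1)}\varrho q_{x_1}r\,dr$ is not constant, and $r=\mathcal{T}(f)(x_1)$ is not a streamline of that flow either. What the continuity equation and the slip condition on $\Gamma_\mathrm{w}^L$ actually give is
\[
\Phi(x_1)-\Phi(0)=\int_0^{x_1}\left(f\varrho q_{x_1}\right)(t,f(t))\left(\mathcal{T}(f)'-f'\right)(t)\,dt .
\]
This determines $\mathcal{T}(f)-\tfrac12$ only up to the term $\int_0^{x_1}\left(f\varrho q_{x_1}-\tfrac12\rho_0^+u_0\right)\left(\mathcal{T}(f)'-f'\right)dt$. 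That is a product of two $O(\sigma)$ factors integrated over a length up to $L$. Hence $M$ cannot be chosen independently of $L$, and since the inner problem needs $M\sigma$ small, your $\sigma_5$ would depend on $L$. This contradicts the statement and breaks the limit $L\to\infty$ in Section~\ref{3D-sec-ex}. The flux argument does bound a fixed point a posteriori, but Schauder needs the self-map property on the whole set first.

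The missing idea is to update $f$ pointwise in $x_1$ by flux matching, not by integrating the slope. With $\rho^*,u^*_{x_1},u^*_r$ taken from the solution of \eqref{Fixed-BVP} for the frozen $f_*$, the paper defines
\[
f^2=f_*^2+\frac{2}{\rho_0^+u_0}\left(\int_{f_*}^{R}t\rho^*u^*_{x_1}(x_1,t)\,dt-\int_{1/2}^{1}t\rho^*u^*_{x_1}(0,t)\,dt\right).
\]
This update has three properties your map lacks:
\begin{itemize}
\item Comparing both fluxes with the background gives $|f^2-\tfrac14|\le|R^2-1|+C(\delta_1+1)\sigma$, with no integration in $x_1$ and independently of the radius of the $f_*$-ball.
\item Differentiating, with the continuity equation and the slip condition, gives $ff'=f_*f_*'\left(1-\frac{\rho^*u^*_{x_1}}{\rho_0^+u_0}\right)+\frac{f_*\rho^*u^*_r}{\rho_0^+u_0}$ at $r=f_*(x_1)$. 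So $f_*'$ enters only with a small factor, and the $C^{2,\alpha}$ bound is $L$-independent.
\item A fixed point $f=f_*$ is exactly flux conservation, whose derivative is \eqref{g-free-cut}.
\end{itemize}
The same pointwise flux identity is what controls $\|f^{(1)}-f^{(2)}\|_{0,(0,L)}$ in the uniqueness step. A contraction argued only through the ODE for $(f^{(1)}-f^{(2)})'$ would lose a factor $L$ again.

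There are also three smaller corrections.
\begin{itemize}
\item The $\psi$-problem is coercive, but not simply because the zeroth-order term has a good sign. With the outward normal, the Robin coefficient on $\Gamma^{L,f}_\mathrm{cd}$ is negative. Using $\psi=0$ on the wall, the quadratic form is $\iint(r|\nabla\psi|^2+\psi^2/r)\,dr\,dx_1-\int_0^L\psi^2(x_1,f(x_1))\,dx_1=\iint r^{-1}|\nabla(r\psi)|^2\,dr\,dx_1$. Working with $Z=r\psi$, which satisfies the pure Neumann condition $\nabla Z\cdot\mathbf{n}_f=f\mathcal{A}$, makes this transparent, as in the paper.
\item A global $H^1$ estimate, even with an $L$-independent constant, grows like $\sqrt{L}$ when the data are only bounded in H\"older norms. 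You need a localized, exponentially decaying energy estimate, or the paper's barriers $\mathfrak{M}$ and $\mathfrak{N}$, to get $L$-independent sup bounds before the cell-wise Schauder estimates.
\item Include $f'(0)=f'(L)=0$ in $\mathcal{J}_M$. Your map preserves it, but the reflection argument at the corners requires it of the input $f$.
\end{itemize}
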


Assuming Lemma~\ref{Lem-S-free}, we prove Proposition~\ref{3D-Prop4.1} as follows.
For each $\mathcal{W}_{\ast}=(S_{\ast},B_{\ast},\Lambda_{\ast})\in\mathcal{P}_{\delta_1\sigma}$, let $(f,\varphi,\psi)$ be the unique solution of Problem~\ref{Prob4-Fix-S}.
Using the divergence-free property of $\varrho(S_\ast,B_\ast,\mathbf{q}_\ast)\mathbf{q}_\ast$, we solve:
\begin{equation}\label{Ite-3D2}
\left\{\begin{aligned}
	\left.\begin{aligned}
	&\varrho(S_{\ast},B_{\ast},\mathbf{q}_\ast)\mathbf{q}_\ast\cdot\nabla S=0\\
    &\varrho(S_{\ast},B_{\ast},\mathbf{q}_\ast)\mathbf{q}_\ast\cdot\nabla B=0\\&\varrho(S_{\ast},B_{\ast},\mathbf{q}_\ast)\mathbf{q}_\ast\cdot\nabla \Lambda=0
	\end{aligned}\right.\quad&\mbox{in}\quad\mathcal{N}_{L,f}^+,\\
(S, B, \Lambda)=(S_\mathrm{en},B_0^+,r\beta_\mathrm{en})\quad&\mbox{on}\quad\Gamma_\mathrm{en}^+.
\end{aligned}\right.
\end{equation}
We take suitable extensions $\mathcal{E}_f(S, B, \Lambda)\in [C^{1,\alpha/2}(\overline{\mathcal{N}_{L,1/4}^+})]^3$ of $(S, B, \Lambda)$.
For such $\mathcal{E}_f(S, B, \Lambda)$, we define an iteration mapping $\mathcal{J}:\mathcal{P}_{\delta_1\sigma}\rightarrow [C^{1,\alpha/2}(\overline{\mathcal{N}_{L,1/4}^+})]^3$ by
\begin{equation*}
\mathcal{J}(S_{\ast},B_{\ast},\Lambda_{\ast})=\mathcal{E}_f(S, B, \Lambda).
\end{equation*}

We then choose $\delta_1$ and $\sigma$ so that the mapping $\mathcal{J}$ maps $\mathcal{P}_{\delta_1\sigma}$ into itself, allowing us to find a fixed point $(S_{\sharp},B_{\sharp},\Lambda_{\sharp})\in\mathcal{P}_{\delta_1\sigma}$ of $\mathcal{J}$. This will prove Proposition~\ref{3D-Prop4.1}. A detailed proof is given in Section~\ref{subsection_4_3}.

\subsection{Proof of Lemma~\ref{Lem-S-free} }
\label{subsection_4_2}

We prove the solvability of Problem~\ref{Prob4-Fix-S} for each fixed $$\mathcal{W}_*=(S_*,B_*,\Lambda_*)\in \mathcal{P}_{\delta_1\sigma}.$$

We define an iteration set
\begin{equation}\label{F-set}
\mathcal{F}_r\left(\frac{1}{2}\right):=\left\{f\in C^{2,\alpha}([0,L])\left|\,
\begin{aligned}
&\left\|f-\frac{1}{2}\right\|_{2,\alpha,(0,L)}\le r, \\
&f(0)=\frac{1}{2},\,f'(0)=f'(L)=0
\end{aligned}\right.\right\}.
\end{equation}
For a constant $\delta_2>0$ to be determined later, we fix $f_{\ast}\in\mathcal{F}_{\delta_2\sigma}(\frac{1}{2})$. If $$\sigma\le\frac{1}{32\delta_2},$$ then $|f_\ast(x_1)-\frac{1}{2}|\le\frac{1}{32}$ for $x_1\in[0,L]$. For such a function $f_\ast$, we solve the following boundary value problem in $\mathcal{N}_{L,f_{\ast}}^+$:
\begin{equation}\label{Fixed-BVP}
\left\{\begin{aligned}
	\left.\begin{aligned}
	&\operatorname{div}\left(\varrho(S_{\ast},B_{\ast},\mathbf{q}_\ast)\mathbf{q}_\ast\right)=0\\
	&-\Delta(\psi\mathbf{e}_{\theta})=G(S_{\ast},B_{\ast},\Lambda_{\ast},\partial_r S_{\ast},\partial_rB_\ast,\partial_r\Lambda_{\ast},\mathbf{t}_\ast,D\varphi)\mathbf{e}_{\theta}
	\end{aligned}\right.\quad&\mbox{in}\quad\mathcal{N}_{L,f_{\ast}}^+,\\
\varphi=\varphi_\mathrm{en},\quad\partial_{x_1}\psi=0\quad&\mbox{on}\quad\Gamma_\mathrm{en}^+,\\
\nabla \varphi\cdot\mathbf{n}_R=0, \quad \psi=0\quad&\mbox{on}\quad\Gamma_\mathrm{w}^L,\\
\varphi=\varphi_0(L,\cdot),\quad\partial_{x_1}\psi=0\quad&\mbox{on}\quad\Gamma^{L,f_{\ast}}_\mathrm{ex},\\
\nabla\varphi\cdot\boldsymbol{\tau}_{f_{\ast}}=\nabla\varphi_0\cdot\boldsymbol{\tau}_{f_\ast},\quad\frac{1}{r}\nabla(r\psi)\cdot\mathbf{n}_{f_\ast}=\mathcal{A}(f_{\ast},f'_{\ast},S_{\ast},B_{\ast},\Lambda_{\ast})\quad&\mbox{on}\quad\Gamma_\mathrm{cd}^{L,f_{\ast}},
\end{aligned}\right.
\end{equation}
where $\varrho$, $G$, $\varphi_0$, and $\mathcal{A}$ are given by \eqref{def-H-G} and \eqref{def-varphi0-B}.

\begin{lemma} \label{Pro-fix-S}
Under the same assumptions on $(R,S_\mathrm{en},\beta_\mathrm{en},u_r^\mathrm{en})$ as in Proposition~\ref{3D-Prop4.1},
there exist a positive constant $\beta^\ast>0$ and a small constant $\sigma_6>0$ depending only on the data and $(\delta_1, \delta_2)$ so that if
\begin{itemize}[label=--]
\item $|\beta_0|\le\beta^\ast$;
\item $\sigma\le\sigma_6$,
\end{itemize}
then the nonlinear boundary value problem \eqref{Fixed-BVP} has a unique solution $(\varphi,\psi)$ satisfying
\begin{equation}\label{fix-est}
\|\varphi-\varphi_0\|_{2,\alpha,\mathcal{N}_{L,f_{\ast}}^+}+\|\psi\|_{2,\alpha,\mathcal{N}_{L,f_{\ast}}^+}\le C\left(\delta_1+1\right)\sigma,
\end{equation}
where the constant $C>0$ depends only on the data but not on $L$.
\end{lemma}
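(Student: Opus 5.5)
We will prove Lemma~\ref{Pro-fix-S} by a contraction (or Schauder) fixed-point argument in a fixed domain, after linearizing the nonlinear system \eqref{Fixed-BVP} about the background state $(\varphi_0,0)$. Since $f_\ast$ is fixed, the domain $\mathcal{N}_{L,f_\ast}^+$ is fixed. Set $\phi:=\varphi-\varphi_0$.

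\textbf{Step 1: reformulation.} We write the first equation of \eqref{Fixed-BVP} as
\[
\sum_{i,j}\partial_i\big(a_{ij}(\mathbf{x})\partial_j\phi\big)=\operatorname{div}\mathbf{F}(\mathbf{x},\phi,\psi,D\psi),
\]
where $a_{ij}$ is the Jacobian of $\mathbf{q}\mapsto \varrho(S_0^+,B_0^+,\mathbf{q})\mathbf{q}$ evaluated at $\mathbf{u}_0=u_0\mathbf{e}_{x_1}+\beta_0\mathbf{e}_\theta$, projected onto the meridional directions. The remainder $\mathbf{F}$ collects three kinds of terms:
\begin{itemize}[label=--]
\item quadratic terms in $(D\phi,\psi,D\psi)$;
\item terms linear in $(S_\ast-S_0^+,B_\ast-B_0^+,\Lambda_\ast-\beta_0r)$;
\item terms linear in $\psi, D\psi$ coming from $\mathbf{t}_\ast$.
\end{itemize}
Lemma~\ref{lem-pseudosubsonic} (the pseudo-subsonic condition \eqref{def-pseudo-subsonic}) gives uniform ellipticity of $(a_{ij})$. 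Here the $\theta$-component $\Lambda_\ast/r$ enters $|\mathbf{q}|^2$ and produces the $\beta_0$-dependent correction.

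The $\psi$-equation is already of the form $-\Delta(\psi\mathbf{e}_\theta)=G$, that is
\[
-\big(\partial_{x_1}^2+\partial_r^2+\tfrac1r\partial_r-\tfrac1{r^2}\big)\psi=G.
\]
We note $G(\text{background})\neq0$, because $\partial_r S_0^+$ and $\partial_r B_0^+$ are $O(\beta_0^2)$ and $\partial_r(\beta_0 r)=\beta_0$. Indeed, $-\partial_rB_0^+ + \frac{\varrho^{\gamma-1}}{\gamma-1}\partial_rS_0^+ + \frac{\Lambda_0}{r^2}\partial_r\Lambda_0 = -\frac{\rho_0^+\beta_0^2}{r}\cdot\frac{\gamma}{(\gamma-1)\rho_0^+}+\frac{\beta_0^2}{(\gamma-1)r}+\frac{\beta_0^2}{r}=0$, so the background contributes nothing. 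The linearization of $G$ in $(D\phi,\psi,D\psi)$, however, has coefficients of size $O(\beta_0^2)$, and its dependence on $\mathcal{W}_\ast$ gives a term $O(\delta_1\sigma)$.

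\textbf{Step 2: linear theory.} For given $(\tilde\phi,\tilde\psi)$ in a ball
\[
\mathcal{K}:=\{\|\tilde\phi\|_{2,\alpha}+\|\tilde\psi\|_{2,\alpha}\le M(\delta_1+1)\sigma\},
\]
we freeze the nonlinear and coupling terms at $(\tilde\phi,\tilde\psi)$ and solve two decoupled linear problems.
\begin{itemize}[label=--]
\item \emph{Mixed problem for $\phi$.} The data are Dirichlet on $\Gamma_\mathrm{en}^+$ (namely $\varphi_\mathrm{en}$) and on $\Gamma_\mathrm{ex}^{L,f_\ast}$ (namely $0$), conormal on $\Gamma_\mathrm{w}^L$, and on $\Gamma_\mathrm{cd}^{L,f_\ast}$ the tangential condition $\nabla\phi\cdot\boldsymbol{\tau}_{f_\ast}=0$. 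The last condition integrates to a Dirichlet condition $\phi=\phi(0,\tfrac12)=0$ along the curve.
\item \emph{Problem for $\psi$.} The data are Neumann at the entrance and exit, Dirichlet $\psi=0$ on the wall, and the Robin-type condition $\frac1r\nabla(r\psi)\cdot\mathbf{n}_{f_\ast}=\mathcal{A}$ on $\Gamma_\mathrm{cd}^{L,f_\ast}$.
\end{itemize}
The boundary data are $O(\sigma)$, and the $\mathcal{A}$ term is $O(\sigma+\delta_1\sigma)$ because $\mathcal{A}(\tfrac12,0,\text{background})=0$ by the choice of $p_0^+(\tfrac12)=p_0^-$. Corner compatibility at the right-angle corners follows from $R'(0)=0$, $f_\ast'(0)=f_\ast'(L)=0$, and \eqref{def-entrance-ep}, which make the data flat near the corners. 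By reflection across the corners, this yields $C^{2,\alpha}$ estimates up to the corners.

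The key is an $L$-independent bound. We would first obtain $H^1$ bounds via the Poincaré inequality on cross sections, using the Dirichlet parts. The wall gives the Dirichlet part for $\psi$ and the contact curve gives it for $\phi$. We then upgrade to $C^{2,\alpha}$ by local Schauder estimates on unit-length subcylinders, together with an $L^\infty$ bound obtained from a weighted or De Giorgi argument on each block, as in \cite{bae2018contact,bae2019contact}.

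\textbf{Step 3: contraction.} The map $(\tilde\phi,\tilde\psi)\mapsto(\phi,\psi)$ satisfies
\[
\|(\phi,\psi)\|_{2,\alpha}\le C_0\big(\sigma+\delta_1\sigma+M^2(\delta_1+1)^2\sigma^2+\beta_0^2 M(\delta_1+1)\sigma\big).
\]
The last term comes from the $\beta_0$-dependent linear coupling terms in $G$ and $\mathbf{F}$ that we treated as sources. Choosing $M=4C_0$, then $\beta^\ast$ with $C_0(\beta^\ast)^2\le\frac14$, and finally $\sigma_6$ small, maps $\mathcal{K}$ into itself. The same estimates applied to differences, in the weaker $C^{1,\alpha}$ norm and using compactness, give a contraction and hence uniqueness.

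\textbf{Main obstacle.} The main obstacle is the $L$-uniform estimate of Step 2, carried out on an infinitely-long-type domain with the singular term $-\psi/r^2$ and with coupling coefficients that do not vanish when $\beta_0\neq0$. This is exactly where the smallness of $|\beta_0|$ is forced. If the coupling cannot be absorbed, we would instead treat the $\beta_0$-terms as part of a coupled elliptic system and derive the $H^1$ energy estimate for $(\phi,\psi)$ jointly, absorbing the cross terms by smallness of $\beta_0$.
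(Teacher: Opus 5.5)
Your architecture matches the paper's: freeze $(\tilde\phi,\tilde\psi)$, solve a Dirichlet/Neumann problem for $\phi$ and a mixed problem for $\psi$, and close by a fixed point. However, the closing estimate in Step~3 is false for your scheme. You attribute every linear feedback of $(\tilde\phi,\tilde\psi)$ to a $\beta_0$-dependent coefficient. That is right for the $\psi$-equation, since the linearization of $G$ carries $\partial_r\mathcal{W}_0^+=O(\beta_0^2)$. It is wrong for the $\phi$-equation, where the $\psi$-linear part of $\operatorname{div}(\varrho\mathbf{q})$ at the background is
\[
\operatorname{div}\Bigl(\rho_0^+\operatorname{curl}(\psi\mathbf{e}_\theta)-\frac{\rho_0^+u_0}{c_0^2}\,\frac{1}{r}\partial_r(r\psi)\,(u_0\mathbf{e}_{x_1}+\beta_0\mathbf{e}_\theta)\Bigr)=-\frac{\rho_0^+u_0^2}{c_0^2}\,\frac{1}{r}\partial_r(r\partial_{x_1}\psi).
\]
This coefficient does not vanish at $\beta_0=0$. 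Hence $\|\phi\|_{2,\alpha}\le C(\dots)+C\|\tilde\psi\|_{2,\alpha}$ with $C$ of order one. With a single ball of radius $M(\delta_1+1)\sigma$ for both components, no choice of $M$, $\beta^\ast$, $\sigma_6$ gives a self-map. The difference estimate has the same defect, so there is no contraction either. Your fallback of absorbing cross terms by smallness of $\beta_0$ does not touch this term.

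What saves the argument is that the coupling is triangular: $\phi\to\psi$ is $O(\beta_0^2)$, while $\psi\to\phi$ is $O(1)$. The paper exploits this as follows:
\begin{itemize}
\item It solves $\psi$ first from $(\tilde\phi,\tilde\psi)$, then builds $\mathfrak{F}$ from $D\tilde\phi$ and the \emph{new} $\psi$.
\item It uses separate radii, $\delta_4\sigma$ for $\psi$ and $\delta_3\sigma$ for $\phi$, with $\delta_4=5C_1^\flat\delta_1$ and $\delta_3=5C_1^\flat(1+\delta_1+\delta_4)$.
\item It imposes $\beta_0^2\lesssim\delta_4/\delta_3$.
\end{itemize}
The contraction factor is then $C[\beta_0^2+(\delta_1+1)\sigma]$, because the $\phi$-difference is controlled by the already small $\psi$-difference. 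A weighted norm $\|\phi\|+K\|\psi\|$, with $K$ large and $K\beta_0^2$ small, would repair your decoupled version just as well. Some such device is needed.

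Two further points need fixing.

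\emph{The estimate of $\tilde{\mathcal{A}}$.} The quantity $\tilde{\mathcal{A}}=\mathcal{A}(f_\ast,f_\ast',\mathcal{W}_\ast)$ is evaluated on $r=f_\ast(x_1)$, not on $r=\frac12$. Since $S_0^+(f_\ast)-S_0^+(\frac12)=O(\beta_0^2\delta_2\sigma)$ and $u_0\bigl(1-(1+|f_\ast'|^2)^{-1/2}\bigr)=O((\delta_2\sigma)^2)$, you only get $\|\tilde{\mathcal{A}}\|_{1,\alpha}\le C(\delta_1\sigma+\beta_0^2\delta_2\sigma+(\delta_2\sigma)^2)$. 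The constant in \eqref{fix-est} must not depend on $\delta_2$, because it is later used to fix $\delta_2=C_{\star\star}(\delta_1+1)$. Therefore $\beta^\ast$ must also make $\beta_0^2\delta_2$ small, and $\sigma_6$ must make $\delta_2^2\sigma$ small. This is exactly why the statement lets $\beta^\ast$ and $\sigma_6$ depend on $\delta_2$; your choice of $\beta^\ast$ omits it.

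\emph{Corner compatibility.} Flat boundary data do not give $C^{2,\alpha}$ at the Dirichlet--Dirichlet corners of the $\phi$-problem, where $\Gamma_\mathrm{cd}^{L,f_\ast}$ meets $\Gamma_\mathrm{en}^+$ and $\Gamma_\mathrm{ex}^{L,f_\ast}$. There the source must vanish, otherwise $r^2\log r$ terms appear, and the source depends on the iterates. The paper builds $\partial_{x_1}^2\tilde\phi\equiv0$ and $\partial_{x_1}\tilde\psi\equiv0$ on $\Gamma_\mathrm{en}^\epsilon\cup\Gamma_\mathrm{ex}^{L,f_\ast}$ into the iteration set, together with $\partial_{x_1}\mathcal{W}_\ast\equiv0$ from $\mathcal{P}_{\delta_1\sigma}$. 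From these it deduces $\operatorname{div}\mathfrak{F}\equiv0$ there, so odd reflection across $x_1=0$ and $x_1=L$ works. It then verifies that the new $\phi$ again satisfies $\partial_{x_1x_1}\phi\equiv0$. Your ball $\mathcal{K}$ needs the same constraints.

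Finally, the paper gets $L$-uniform sup bounds from explicit barriers rather than energy: $2\mathfrak{n}(8-r^3)$ for $Z=r\psi$, and a multiple of $2-e^{-4r}$ for $\phi_\mathrm{hom}$. If you use energy instead, a global $H^1$ bound grows with $L$, so the estimate has to be localized.
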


Hereafter, unless otherwise specified, $C$ denotes a positive constant
depending only on the data and independent of $L$; its value may vary
from line to line.

\begin{proof}
\textbf{1.} Let us define two iteration sets
\begin{equation*}
\left.
\begin{aligned}
&\mathcal{K}^{f_\ast,1}_r:=\left\{\phi\in C^{2,\alpha}(\overline{\mathcal{N}_{L,f_\ast}^+})\left|\,
\begin{aligned}
&\phi\mbox{ is axisymmetric,}\\
&\|\phi\|_{2,\alpha,\mathcal{N}_{L,f_\ast}^+}\le r,\\
&\phi=\varphi_\mathrm{en}\mbox{ on }\Gamma_\mathrm{en}^+,\\
&\phi\equiv0\mbox{ on }\Gamma_\mathrm{ex}^{L,f_\ast},\\
&\partial_{x_1}^2\phi\equiv0\mbox{ on }\Gamma_\mathrm{en}^\epsilon\cup\Gamma^{L,f_{\ast}}_\mathrm{ex}
\end{aligned}\right.\right\},\\
&\mathcal{K}^{f_\ast,2}_r:=\left\{\psi\in C^{2,\alpha}(\overline{\mathcal{N}_{L,f_\ast}^+})\left|\,
\begin{aligned}
&\psi\mbox{ is axisymmetric,}\\
& \|\psi\|_{2,\alpha,\mathcal{N}_{L,f_{\ast}}^+}\le r,\\
&\partial_{x_1}\psi\equiv0\mbox{ on }\Gamma_\mathrm{en}^+\cup\Gamma^{L,f_{\ast}}_\mathrm{ex}
\end{aligned}\right.\right\}.
\end{aligned}\right.
\end{equation*}
Then define 
\begin{equation}\label{ell-set}
\mathcal{K}^{f_\ast}_{r_1,r_2}:=\mathcal{K}^{f_\ast,1}_{r_1}\times\mathcal{K}^{f_\ast,2}_{r_2}.
\end{equation}

\textbf{2.} For two constants $\delta_3,\delta_4>0$ to be determined later, let $(\tilde{\phi},\tilde{\psi})\in\mathcal{K}^{f_\ast}_{\delta_3\sigma,\delta_4\sigma}$ and set
 \begin{equation}\label{def-GB}
 \tilde{G}:=G(\mathcal{W}_{\ast},\partial_r\mathcal{W}_{\ast},\mathbf{t}(r,\tilde{\psi},D\tilde{\psi},\Lambda_{\ast}),D\tilde{\phi}+D\varphi_0),\quad \tilde{\mathcal{A}}:=\mathcal{A}(f_{\ast},f_{\ast}',\mathcal{W}_{\ast}),
 \end{equation}
 where $G$ and $\mathcal{A}$ are given by \eqref{def-H-G} and \eqref{def-varphi0-B}, respectively.

The second equation in \eqref{Fixed-BVP} is equivalent to $$-\partial_{x_1 x_1}\psi-\frac{1}{r}\partial_r(r\partial_r\psi)+\frac{\psi}{r^2}=\tilde{G}.$$
By the standard elliptic theory, the following boundary value problem
\begin{equation}\label{3D-psi-BC}\left\{
\begin{aligned}
-\left(\partial_{x_1 x_1}+\frac{1}{r}\partial_r(r\partial_r)-\frac{1}{r^2}\right)\psi=\tilde{G}\quad&\mbox{in}\quad\mathcal{N}_{L,f_{\ast}}^+\\
-\partial_{x_1}\psi=0\quad&\mbox{on}\quad\Gamma_\mathrm{en}^+,\\
\psi=0\quad&\mbox{on}\quad\Gamma_\mathrm{w}^L,\\
\partial_{x_1}\psi=0\quad&\mbox{on}\quad\Gamma^{L,f_{\ast}}_\mathrm{ex},\\
\frac{1}{r}\nabla(r\psi)\cdot\mathbf{n}_{f_{\ast}}=\tilde{\mathcal{A}}\quad&\mbox{on}\quad\Gamma_\mathrm{cd}^{L,f_\ast}
\end{aligned}\right.
\end{equation}
has a unique solution $\psi\in C^{1,\alpha}(\overline{\mathcal{N}_{L,f_\ast}^+})\cap C^{2,\alpha}(\mathcal{N}_{L,f_\ast}^+)$. To obtain a uniform \(C^0\)-estimate independent of \(L\), it is more convenient to
estimate \(Z:=r\psi\). From \eqref{3D-psi-BC}, one directly computes that \(Z\) satisfies
\begin{equation*}\left\{
\begin{aligned}
\displaystyle\left(\partial_{x_1 x_1}+\partial_{rr}-\frac1r\partial_r\right)Z=-r\tilde G
\quad&\text{in}\quad\mathcal{N}_{L,f_\ast}^+,\\
\partial_{x_1}Z=0\quad&\text{on}\quad\Gamma_\mathrm{en}^+\cup\Gamma_\mathrm{ex}^{L,f_\ast},\\
Z=0\quad&\text{on}\quad\Gamma_\mathrm{w}^L,\\
\nabla Z\cdot \mathbf{n}_{f_\ast}=f_\ast\tilde{\mathcal A}\quad&\text{on}\quad\Gamma_\mathrm{cd}^{L,f_\ast}.
\end{aligned}\right.
\end{equation*}
Define $\mathfrak{N}:\overline{\mathcal{N}_{L,f_\ast}^+}\to\mathbb{R}^+$ by $$\mathfrak{N}(x_1,r):=2\mathfrak{n}(8-r^3)\quad\mbox{for}\quad\mathfrak{n}:=\|\tilde{G}\|_{0,\alpha,\mathcal{N}_{L,f_\ast}^+}+\|\tilde{\mathcal{A}}\|_{1,\alpha,\Gamma_{\mathrm{cd}}^{L,f_\ast}}.$$
Since $$\partial_r\mathfrak{N}=-6\mathfrak{n}r^2,\quad\partial_{rr}\mathfrak{N}=-12\mathfrak{n}r,$$ we have $$\left(\partial_{x_1 x_1}+\partial_{rr}-\frac{1}{r}\partial_r\right)(\mathfrak{N}\pm Z)=-6\mathfrak{n}r\mp r\tilde{G}\le 0.$$ Moreover, on $\Gamma_{\mathrm{cd}}^{L,f_\ast}$, \begin{equation}\label{N-est}\nabla\mathfrak{N}\cdot\mathbf{n}_{f_\ast}=-\frac{6\mathfrak{n}(f_\ast(x_1))^2}{\sqrt{1+|f_\ast'(x_1)|^2}}\le -f_\ast(x_1)|\tilde{\mathcal{A}}|.\end{equation} Then, a direct computation with \eqref{N-est} yields $$\left\{\begin{aligned}
    \displaystyle\left(\partial_{x_1 x_1}+\partial_{rr}-\frac{1}{r}\partial_r\right)(\mathfrak{N}\pm Z)\le 0\quad&\mbox{in}\quad \mathcal{N}_{L,f_\ast}^+,\\
    \partial_{x_1}(\mathfrak{N}\pm Z)=0\quad&\mbox{on}\quad \Gamma_{\mathrm{en}}^+\cup\Gamma_{\mathrm{ex}}^{L,f_\ast},\\
    \mathfrak{N}\pm Z\ge 0\quad&\mbox{on}\quad \Gamma_{\mathrm{w}}^L,\\
    \nabla(\mathfrak{N}\pm Z)\cdot\mathbf{n}_{f_\ast}\le 0\quad&\mbox{on}\quad\Gamma_{\mathrm{cd}}^{L,f_\ast}.
\end{aligned}\right.$$
By the comparison principle and Hopf's lemma, we have $$-\mathfrak{N}\le Z\le \mathfrak{N}\quad\mbox{in}\quad\mathcal{N}_{L,f_\ast}^+.$$
Therefore, we get the estimate \begin{equation*}
    \|Z\|_{0,\mathcal{N}_{L,f_\ast}^+}\le C\left(\|r\tilde{G}\|_{0,\alpha,\mathcal{N}_{L,f_\ast}^+}+\|\tilde{\mathcal{A}}\|_{1,\alpha,\Gamma_{\mathrm{cd}}^{L,f_\ast}}\right).
\end{equation*}
Since $Z=r\psi$ and $r\ge 1/4$, it follows that \begin{equation}\label{psi-estimate-C0}
    \|\psi\|_{0,\mathcal{N}_{L,f_\ast}^+}\le C\left(\|\tilde{G}\|_{0,\alpha,\mathcal{N}_{L,f_\ast}^+}+\|\tilde{\mathcal{A}}\|_{1,\alpha,\Gamma_{\mathrm{cd}}^{L,f_\ast}}\right).
\end{equation}

To obtain a $C^{2,\alpha}$-estimate of $\psi$ up to the boundary,
we use the method of reflection.
Define extensions of $R\in C^{2, \alpha}([0, L])$, $f_{\ast}\in\mathcal{F}_{\delta_2\sigma}(\frac{1}{2})$ into $-1\le x_1\le L+1$ by
\begin{equation*}
R^e(x_1):=\left\{\begin{aligned}
R(-x_1)\quad&\mbox{for }-1\le x_1<0,\\
R(x_1)\quad&\mbox{for }0\le x_1\le L,\\
R(2L-x_1)\quad&\mbox{for }L< x_1\le L+1,
\end{aligned}\right.\qquad
f_{\ast}^e(x_1):=\left\{\begin{aligned}
f_{\ast}(-x_1)\quad&\mbox{for }-1\le x_1<0,\\
f_{\ast}(x_1)\quad&\mbox{for }0\le x_1\le L,\\
f_{\ast}(2L-x_1)\quad&\mbox{for }L< x_1\le L+1.
\end{aligned}\right.
\end{equation*}
Since $R'(0)=R'(L)=f_{\ast}'(0)=f_{\ast}'(L)=0$, we have the estimate
$$\|R^e\|_{2,\alpha,(-1,L+1)}\le C\|R\|_{2,\alpha,(0,L)}\quad\|f_{\ast}^e\|_{2,\alpha,(-1,L+1)}\le C\|f_{\ast}\|_{2,\alpha,(0,L)}.$$
We define an extended domain
\begin{equation*}\label{3D-N-ext}
\mathcal{N}_\mathrm{ext}:=\left\{\mathbf{x}\in\mathbb{R}^3: -1<x_1<L+1,\, f^e_{\ast}(x_1)<r<R^e(x_1)\right\}
\end{equation*}
and
$$\Gamma_\mathrm{ext}:=\partial\mathcal{N}_\mathrm{ext}\cap\left\{r=f_{\ast}^e(x_1)\right\}.$$
We also define extensions of $(\psi, \tilde{G}, \tilde{\mathcal{A}})$ into $\mathcal{N}_\mathrm{ext}$ as follows:
\begin{equation*}
\begin{aligned}
	\left(\psi_\mathrm{ext},\mathfrak{G}_\mathrm{ext},\mathfrak{A}_\mathrm{ext}\right)(\mathbf{x})
	&:=\left\{\begin{aligned}
	\left(\psi,\tilde{G},\tilde{\mathcal{A}}\right)(-x_1,x_2,x_3)\quad&\mbox{for }-1\le x_1<0,\\
	\left(\psi,\tilde{G},\tilde{\mathcal{A}}\right)(x_1,x_2,x_3)\quad&\mbox{for }0\le x_1\le L,\\
	\left(\psi,\tilde{G},\tilde{\mathcal{A}}\right)(2L-x_1,x_2,x_3)\quad&\mbox{for }L< x_1\le L+1.
	\end{aligned}\right.\\
	\end{aligned}
\end{equation*}
Then $\mathfrak{G}_\mathrm{ext}\in C^{\alpha}(\overline{\mathcal{N}_\mathrm{ext}})$ and
\begin{equation*}
\|\mathfrak{G}_\mathrm{ext}\|_{\alpha,\mathcal{N}_\mathrm{ext}}\le C\|\tilde{G}\|_{\alpha,\mathcal{N}_{L,f_\ast}^+}.
\end{equation*}
By the compatibility conditions of $(\mathcal{W}_{\ast}, f_{\ast})$ given in \eqref{Ent-Ang-set} and \eqref{F-set},
$$\nabla\mathfrak{A}_\mathrm{ext}\cdot\boldsymbol{\tau}_{f_\ast}\equiv0\quad\mbox{on}\quad\Gamma_\mathrm{ext}\cap\{x_1=0,L\}.$$
From this and the definition of $\mathfrak{A}_\mathrm{ext}$, we have the estimate
\begin{equation*}
\|\mathfrak{A}_\mathrm{ext}\|_{1,\alpha,\Gamma_\mathrm{ext}}\le C\|\tilde{\mathcal{A}}\|_{1,\alpha,\Gamma_\mathrm{cd}^{L,f_\ast}}.
\end{equation*}

Consider a connected subdomain $\mathcal{N}_l$ of $\mathcal{N}_\mathrm{ext}$ such that
\begin{equation*}
\mathcal{N}_\mathrm{ext}\cap\left\{-\frac{1}{2}\le x_1\le \frac{1}{2}\right\}\subset\mathcal{N}_l\subset\mathcal{N}_\mathrm{ext}\cap\left\{-1\le x_1\le 1\right\}
\end{equation*}
and the boundary $\partial\mathcal{N}_l$ is smooth.
By the standard elliptic theory, the boundary value problem
\begin{equation}\label{3D-MP}
\left\{\begin{aligned}
-\left(\partial_{x_1 x_1}+\frac{1}{r}\partial_r(r\partial_r)-\frac{1}{r^2}\right)\Psi=\mathfrak{G}_\mathrm{ext}\quad&\mbox{in}\quad\mathcal{N}_{l},\\
\Psi=0\quad&\mbox{on}\quad \partial\mathcal{N}_l\cap\{r=R^e(x_1)\},\\
\frac{1}{r}\nabla(r\Psi)\cdot\mathbf{n}_{f_{\ast}^e}=\mathfrak{A}_\mathrm{ext}\quad&\mbox{on}\quad\partial\mathcal{N}_{l}\cap\{r=f^e_{\ast}(x_1)\},\\
\Psi=\psi_\mathrm{ext}\quad&\mbox{on}\quad\partial\mathcal{N}_{l}\setminus\{r=R^e(x_1),r=f_{\ast}^e(x_1)\},
\end{aligned}\right.
\end{equation}
for
\begin{equation*}
\mathbf{n}_{f_{\ast}^e}:=\frac{-(f_{\ast}^e)'(x_1)\mathbf{e}_{x_1}+\mathbf{e}_r}{\sqrt{1+|(f_{\ast}^e)'(x_1)|^2}},
\end{equation*}
has a unique solution $\Psi\in C^{2,\alpha}(\overline{\mathcal{N}_{l}})$ that satisfies
$$\|\Psi\|_{2,\alpha,\mathcal{N}_\mathrm{ext}\cap\left\{-\frac{1}{2}\le x_1\le \frac{1}{2}\right\}}\le C\left(\|\mathfrak{G}_\mathrm{ext}\|_{\alpha,\mathcal{N}_{l}}+\|\mathfrak{A}_\mathrm{ext}\|_{1,\alpha,\partial\mathcal{N}_{l}\cap\{r=f_\ast^e(x_1)\}}+\|\psi\|_{C^0(\overline{\mathcal{N}_{L,f_\ast}^+})}\right).$$
By the definitions of $(\mathfrak{G}_\mathrm{ext}, \mathfrak{A}_\mathrm{ext},\psi_\mathrm{ext})$ and the uniqueness of a solution to \eqref{3D-MP}, we have
$\Psi(x_1,x_2,x_3)=\Psi(-x_1,x_2,x_3)$ and $\partial_{x_1}\Psi(0,x_2,x_3)=0$.
The uniqueness of a solution to \eqref{3D-psi-BC} yields that $\Psi=\psi$ in $\mathcal{N}_l\cap\{x_1\ge 0\}.$
By combining \eqref{psi-estimate-C0} and the $C^{2,\alpha}$-estimate of $\Psi$ given right above, we obtain that
\begin{equation}\label{left-psi}
\|\psi\|_{2,\alpha,\mathcal{N}_{L,f_{\ast}}^+\cap\left\{0\le x_1\le \frac{1}{2}\right\}}\le C\left(\|\tilde{G}\|_{\alpha,\mathcal{N}_{L,f_\ast}^+}+\|\tilde{\mathcal{A}}\|_{1,\alpha,\Gamma_\mathrm{cd}^{L,f_\ast}}\right).
\end{equation}
Similarly, one obtains
\begin{equation}\label{right-psi}
\|\psi\|_{2,\alpha,\mathcal{N}_{L,f_{\ast}}^+\cap\left\{L-\frac{1}{2}\le x_1\le L\right\}}\le C\left(\|\tilde{G}\|_{\alpha,\mathcal{N}_{L,f_\ast}^+}+\|\tilde{\mathcal{A}}\|_{1,\alpha,\Gamma_\mathrm{cd}^{L,f_\ast}}\right).
\end{equation}
It follows from \eqref{left-psi}--\eqref{right-psi} that
\begin{equation}\label{psi-est-2}
\|\psi\|_{2,\alpha,\mathcal{N}_{L,f_\ast}^+}\le C\left(\|\tilde{G}\|_{\alpha,\mathcal{N}_{L,f_\ast}^+}+\|\tilde{\mathcal{A}}\|_{1,\alpha,\Gamma_\mathrm{cd}^{L,f_\ast}}\right).
\end{equation}

Finally, we verify that the solution obtained above is axisymmetric by a rotation-invariance argument. For any $\vartheta\in[0,2\pi)$, define a function $\psi^{\vartheta}$ by
$$\psi^{\vartheta}(\mathbf{x}):=\psi(x_1,x_2\cos\vartheta-x_3\sin\vartheta,x_2\sin\vartheta+x_3\cos\vartheta).$$
Then, we have $\psi^{\vartheta}=\psi$ on $\Gamma_{\mathrm{w}}^L\cup\Gamma_{\mathrm{cd}}^{L,f_\ast}$ and $\partial_{x_1} \psi^{\vartheta}=\partial_{x_1}\psi$ on $\Gamma_{\mathrm{en}}^+\cup\Gamma_{\mathrm{ex}}^{L,f_\ast}$.
It can be directly checked that $$-\left(\partial_{x_1 x_1}+\frac{1}{r}\partial_r(r\partial_r)-\frac{1}{r^2}\right)\psi^\vartheta=-\left(\partial_{x_1 x_1}+\frac{1}{r}\partial_r(r\partial_r)-\frac{1}{r^2}\right)\psi$$ holds in $\mathcal{N}_{L,f_\ast}^+$. This implies that $\psi^{\vartheta}$ is a solution to \eqref{3D-psi-BC}. By the uniqueness of the solution to \eqref{3D-psi-BC}, we conclude that
$\psi=\psi^{\vartheta}.$
Therefore, $\psi$ is axisymmetric.

\textbf{3.}
For $\xi, \zeta\in\mathbb{R}$, $\mathbf{s}=(s_1,s_2,s_3)\in\mathbb{R}^3$, and $\mathbf{v}=(v_1,v_2,v_3)\in\mathbb{R}^3$, define $\tilde{\varrho}$ and $\mathbf{A}=(A_1,A_2,A_3)$ by
\begin{equation*}
\tilde{\varrho}(\xi,\zeta,\mathbf{s},\mathbf{v}):=\varrho(\xi,\zeta,\mathbf{s}+\mathbf{v}),\quad
A_j(\xi,\zeta,\mathbf{s},\mathbf{v}):=\tilde{\varrho}(\xi,\zeta,\mathbf{s},\mathbf{v})s_j\quad\mbox{for}\quad j=1,2,3,
\end{equation*}
where $\varrho$ is defined by \eqref{def-H-G}.
Then the equation
$$\operatorname{div}\left(\varrho(S, B,\mathbf{q}(r,\psi,D\psi,D\varphi,\Lambda))\mathbf{q}(r,\psi,D\psi,D\varphi,\Lambda)\right)=0$$
can be rewritten as
\begin{equation}\label{re-conti}
\operatorname{div}\left(\mathbf{A}(S,B,D\varphi,\mathbf{t}(r,\psi,D\psi,\Lambda))\right)=-\operatorname{div}\left(\tilde{\varrho}(S,B,D\varphi,\mathbf{t}(r,\psi,D\psi,\Lambda))\mathbf{t}(r,\psi,D\psi,\Lambda)\right).
\end{equation}
For $\varphi_0$ given by \eqref{def-varphi0-B}, denote $\mathbf{V}_0:=(S_0^+,B_0^+,D\varphi_0,\beta_0\mathbf{e}_\theta)$ and set
\begin{equation}\label{aij-def}
a_{ij}:=\partial_{s_j}A_i(\mathbf{V}_0)\quad\mbox{for}\quad i,j=1,2,3.
\end{equation}
Then $$[a_{ij}]_{i,j=1}^3=\begin{bmatrix}
    \rho_0^+\left(1-\dfrac{u_0^2}{c_0^2}\right) & \rho_0^+\dfrac{u_0\beta_0\sin\theta}{c_0^2} & -\rho_0^+\dfrac{u_0\beta_0\cos\theta}{c_0^2} \\ 0 & \rho_0^+ & 0 \\ 0 & 0 & \rho_0^+
\end{bmatrix}.$$

The pseudo-subsonic condition for the background state implies that $$\delta_0(\beta_0):=\inf_{\mathcal{N}_{L,f_\ast}^+}\left(1-\frac{u_0^2}{c_0^2}-\frac{u_0^2\beta_0^2}{4c_0^4}\right)>0.$$

\begin{lemma}\label{lem-pseudosubsonic}
Let $[a_{ij}]_{i,j=1}^3$ be defined by \eqref{aij-def}, and suppose that $\delta_0(\beta^\natural)>0$ for a positive constant $\beta^\natural>0$.
If $|\beta_0|\le\beta^\natural$, then the coefficient matrix $[a_{ij}]_{i,j=1}^3$ satisfies the uniform ellipticity condition in $\mathcal{N}_{L,f_\ast}^+$. That is, there exist constants $\lambda,\Lambda>0$, depending only on the data, such that $$\lambda |\boldsymbol{\xi}|^2\le \sum_{i,j=1}^3 a_{ij}\xi_i\xi_j\le \Lambda |\boldsymbol{\xi}|^2\quad\mbox{for all }\boldsymbol{\xi}\in\mathbb R^3.$$
\end{lemma}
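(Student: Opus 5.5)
The plan is to reduce the ellipticity statement to an algebraic inequality for the quadratic form built from $[a_{ij}]$. Only the symmetric part of the matrix enters the quadratic form, so we compute
$$\sum_{i,j=1}^3 a_{ij}\xi_i\xi_j=\rho_0^+\Big[\Big(1-\frac{u_0^2}{c_0^2}\Big)\xi_1^2+\frac{u_0\beta_0}{c_0^2}\,\xi_1(\sin\theta\,\xi_2-\cos\theta\,\xi_3)+\xi_2^2+\xi_3^2\Big].$$
We then rotate the $(\xi_2,\xi_3)$ variables by the orthogonal change $\eta=\sin\theta\,\xi_2-\cos\theta\,\xi_3$, $\zeta=\cos\theta\,\xi_2+\sin\theta\,\xi_3$. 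This preserves $\xi_2^2+\xi_3^2=\eta^2+\zeta^2$, so the form becomes
$$\rho_0^+\Big[a\,\xi_1^2+b\,\xi_1\eta+\eta^2+\zeta^2\Big],\qquad a:=1-\frac{u_0^2}{c_0^2},\quad b:=\frac{u_0\beta_0}{c_0^2}.$$
This is independent of $\theta$, so any constants we obtain are uniform over $\mathcal{N}_{L,f_\ast}^+$.

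For the lower bound, the $2\times2$ block $\begin{bmatrix} a & b/2\\ b/2 & 1\end{bmatrix}$ has trace $a+1$ and determinant $a-b^2/4=1-\frac{u_0^2}{c_0^2}-\frac{u_0^2\beta_0^2}{4c_0^4}$. The hypothesis $\delta_0(\beta^\natural)>0$ is imposed at $\beta^\natural$, and this determinant is decreasing in $\beta_0^2$. Hence for $|\beta_0|\le\beta^\natural$ the determinant is at least $\delta_0(\beta^\natural)>0$, and consequently $a>0$. The smaller eigenvalue of a positive definite symmetric $2\times 2$ matrix is at least $\det/\mathrm{trace}\ge \delta_0(\beta^\natural)/2$, using $a\le1$ so that the trace is at most $2$. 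The $\zeta$ direction contributes eigenvalue $1$, so we may take $\lambda=\rho_0^+\min\{1,\delta_0(\beta^\natural)/2\}$.

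For the upper bound, Cauchy--Schwarz gives $|b\,\xi_1\eta|\le\frac{|b|}{2}(\xi_1^2+\eta^2)$, and $|b|\le u_0\beta^\natural/c_0^2$. This yields $\Lambda=\rho_0^+(1+u_0\beta^\natural/(2c_0^2))$.

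The main subtlety is the dependence of the constants. The sound speed is $c_0^2=\gamma p/\rho$, and on the outer layer it involves $p_0^+(r)$, which varies with $r$ and $\beta_0$. On the other hand, the matrix $[a_{ij}]$ is written with a constant $c_0$; I would interpret $\delta_0$ as an infimum over the domain, so the determinant bound holds pointwise uniformly. I would also note that $p_0^+(r)$ lies between $p_0^-$ and $p_0^-+\rho_0^+(\beta^\natural)^2\ln 4$ for $r\in(1/4,2)$, so $c_0^2$ is bounded above and below by quantities depending only on the data and $\beta^\natural$. This makes $\lambda$ and $\Lambda$ depend only on the data, with $\beta^\natural$ fixed.
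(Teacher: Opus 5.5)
Your proposal is correct and follows the paper's proof essentially step by step: the same quadratic form, the same rotation $(\xi_2,\xi_3)\mapsto(\eta,\zeta)$ reducing everything to the $2\times2$ block $\mathbb{M}$, positive definiteness from $\det\mathbb{M}\ge\delta_0(\beta^\natural)$ together with $1-u_0^2/c_0^2>0$, and the lower bound $\mu_1\ge\det\mathbb{M}/\operatorname{tr}\mathbb{M}\ge\delta_0(\beta^\natural)/2$. The only difference is the upper bound: you apply Cauchy--Schwarz to the cross term, whereas the paper uses $\mu_2\le\operatorname{tr}\mathbb{M}<2$ to get $\Lambda=2\rho_0^+$; since $\det\mathbb{M}>0$ already forces $|b|/2<\sqrt{a}\le 1$, your constant is also at most $2\rho_0^+$, so the bounds on $c_0$ in your last paragraph are not actually needed.
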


\begin{proof}
For $\boldsymbol{\xi}=(\xi_1,\xi_2,\xi_3)\in\mathbb{R}^3$, define $$Q(\boldsymbol{\xi}):=\sum_{i,j=1}^3a_{ij}\xi_i\xi_j.$$
By the definition of $[a_{ij}]_{i,j=1}^3$, we have
$$Q(\boldsymbol{\xi})=\rho_0^+\left[\left(1-\frac{u_0^2}{c_0^2}\right)\xi_1^2+\frac{u_0\beta_0}{c_0^2}\xi_1(\sin\theta\ \xi_2-\cos\theta\ \xi_3)+\xi_2^2+\xi_3^2\right].$$ Set $$\eta:=\sin\theta\ \xi_2-\cos\theta\ \xi_3, \quad\zeta:=\cos\theta\ \xi_2+\sin\theta\ \xi_3.$$ Then $\eta^2+\zeta^2=\xi_2^2+\xi_3^2$. Therefore, $$Q(\boldsymbol{\xi})=\rho_0^+\left[\left(1-\frac{u_0^2}{c_0^2}\right)\xi_1^2+\frac{u_0\beta_0}{c_0^2}\xi_1\eta+\eta^2+\zeta^2\right].$$ Equivalently, $$Q(\boldsymbol{\xi})=\rho_0^+\left[\begin{pmatrix}\xi_1&\eta\end{pmatrix}\mathbb{M}\begin{pmatrix}\xi_1\\\eta\end{pmatrix}+\zeta^2\right],$$ where $$\mathbb{M}:=\begin{pmatrix}1-\dfrac{u_0^2}{c_0^2}&\dfrac{u_0\beta_0}{2c_0^2}\\\dfrac{u_0\beta_0}{2c_0^2}&1\end{pmatrix}.$$
We have $$\det\mathbb{M}=1-\frac{u_0^2}{c_0^2}-\frac{u_0^2\beta_0^2}{4c_0^4}\ge \delta_0(\beta^\natural)>0.$$ Moreover, $$1-\frac{u_0^2}{c_0^2}=\det\mathbb{M}+\frac{u_0^2\beta_0^2}{4c_0^4}>0.$$
Hence, by Sylvester's criterion, $\mathbb{M}$ is positive definite.

Let $0<\mu_1\le\mu_2$ be the eigenvalues of $\mathbb{M}$. Since $$\mu_1\mu_2=\det\mathbb{M}$$ and $$\mu_2\le \operatorname{tr}\mathbb{M}=2-\frac{u_0^2}{c_0^2}<2,$$ we obtain $$\mu_1=\frac{\det\mathbb{M}}{\mu_2}\ge \frac{\det\mathbb{M}}{\operatorname{tr}\mathbb{M}}\ge\frac{\delta_0(\beta^\natural)}{2}.$$
Consequently, $$\begin{pmatrix}\xi_1&\eta\end{pmatrix}\mathbb{M}\begin{pmatrix}\xi_1\\\eta\end{pmatrix}\ge\frac{\delta_0(\beta^\natural)}{2}(\xi_1^2+\eta^2).$$
Since $0<\delta_0(\beta^\natural)\le 1$, it follows that \begin{align*}
Q(\boldsymbol{\xi})&\ge\rho_0^+\left[\frac{\delta_0(\beta^\natural)}{2}(\xi_1^2+\eta^2)+\zeta^2\right]\\&\ge \frac{\rho_0^+\delta_0(\beta^\natural)}{2}(\xi_1^2+\eta^2+\zeta^2)\\&=\frac{\rho_0^+\delta_0(\beta^\natural)}{2}|\boldsymbol{\xi}|^2.
\end{align*}
On the other hand, since $\mu_2<2$, \begin{align*}
Q(\boldsymbol{\xi})&\le \rho_0^+[2(\xi_1^2+\eta^2)+\zeta^2]\\&\le 2\rho_0^+(\xi_1^2+\eta^2+\zeta^2)\\&=2\rho_0^+|\boldsymbol{\xi}|^2.
\end{align*}
This proves the uniform ellipticity of $[a_{ij}]_{i,j=1}^3$.
\end{proof}

Set $\phi:=\varphi-\varphi_0$.
Then \eqref{re-conti} can be rewritten as
\begin{equation*}
\mathcal{L}(\phi)=\operatorname{div}\mathbf{F}(S-S_0^+,B-B_0^+,D\phi,\mathbf{t}(r,\psi,D\psi,\Lambda)-\beta_0\mathbf{e}_\theta),
\end{equation*}
where $\mathcal{L}$ and $\mathbf{F}=(F_1,F_2,F_3)$ are defined as follows:
\begin{equation}\label{def-F}
\left.\begin{aligned}
\mathcal{L}(\phi):=&\sum_{i=1}^3 \partial_i\left(\sum_{j=1}^3 a_{ij}\partial_{j}\phi\right)=\sum_{i=1}^3a_{ii}\partial_{ii}\phi+a_{12}\partial_1\partial_2\phi+a_{13}\partial_1\partial_3\phi,\\
F_i(Q):=&-\tilde{\varrho}(\mathbf{V}_0+Q)v_i-\int_0^1 D_{(\xi,\zeta,\mathbf{v})}A_i(\mathbf{V}_0+tQ)dt\cdot(\xi,\zeta,\mathbf{v})\\
&-\mathbf{s}\cdot\int_0^1\left(D_\mathbf{s}A_i(\mathbf{V}_0+tQ)-D_\mathbf{s}A_i(\mathbf{V}_0)\right)dt,
\end{aligned}
\right.
\end{equation}
with $Q=(\xi,\zeta, \mathbf{s},\mathbf{v})\in\mathbb{R}\times\mathbb{R}\times(\mathbb{R}^3)^2$.
Here, $\partial_{x_i}$ is abbreviated as $\partial_i$.

By the boundary conditions for $\varphi$ given in \eqref{Fixed-BVP} and the definition of $\varphi_0$, the boundary conditions for $\phi$ on $\partial\mathcal{N}_{L,f_{\ast}}^+\setminus\Gamma_\mathrm{cd}^{L,f_{\ast}}$ become
\begin{equation*}
\phi=\varphi_\mathrm{en }\quad\mbox{on}\quad \Gamma_\mathrm{en}^+,\quad\nabla\phi\cdot\mathbf{n}_R=\frac{u_0R'(x_1)}{\sqrt{1+|R'(x_1)|^2}}\quad\mbox{on}\quad\Gamma_\mathrm{w}^{L}\quad\mbox{and}\quad
\phi=0\quad\mbox{on}\quad \Gamma_\mathrm{ex}^{L,f_{\ast}}.
\end{equation*}
On $\Gamma_\mathrm{cd}^{L,f_{\ast}}$, the boundary condition for $\varphi$ given in \eqref{Fixed-BVP} implies that $\phi$ should be a constant along $\Gamma_\mathrm{cd}^{L,f_{\ast}}$.
Since we seek $\phi$ continuous up to the boundary, and since $\varphi_\mathrm{en}(\frac{1}{2})=0$ by the definition \eqref{def-varphi-en}, we prescribe the boundary condition for $\phi$ on $\Gamma_\mathrm{cd}^{L,f_{\ast}}$ as
\begin{equation*}
\phi=0\quad\mbox{on}\quad \Gamma_\mathrm{cd}^{L,f_\ast}.
\end{equation*}

For a fixed $(\tilde{\phi},\tilde{\psi})\in\mathcal{K}^{f_\ast}_{\delta_3\sigma,\delta_4\sigma}$, let $\psi\in C^{2,\alpha}(\overline{\mathcal{N}_{L,f_\ast}^+})$ be the unique solution to the linear boundary value problem \eqref{3D-psi-BC} associated with $(\tilde{\phi},\tilde{\psi})\in\mathcal{K}^{f_\ast}_{\delta_3\sigma,\delta_4\sigma}$. For such $\psi$, we set
\begin{equation}\label{FF-def}
\mathfrak{F}:=\mathbf{F}(S_\ast-S_0^+,B_\ast-B_0^+,D{\tilde{\phi}},\mathbf{t}(r,{\psi},D{\psi},\Lambda_{\ast})-\beta_0\mathbf{e}_\theta),
\end{equation}
where $\mathbf{F}$ is given by \eqref{def-F}. We consider the following linear boundary value problem
\begin{equation}\label{lin-phi}
\left\{\begin{aligned}
\mathcal{L}(\phi)=\operatorname{div}\mathfrak{F}\quad&\mbox{in}\quad \mathcal{N}_{L,f_\ast}^+,\\
\phi=\varphi_\mathrm{en}\quad&\mbox{on}\quad\Gamma_\mathrm{en}^+,\\
\phi=0\quad&\mbox{on}\quad \Gamma_\mathrm{cd}^{L,f_\ast}\cup\Gamma^{L,f_{\ast}}_\mathrm{ex},\\
\nabla\phi\cdot\mathbf{n}_R=\frac{u_0R'(x_1)}{\sqrt{1+|R'(x_1)|^2}}=:\mathfrak{H}\quad&\mbox{on}\quad\Gamma_\mathrm{w}^{L}.
\end{aligned}\right.
\end{equation}
In the next step, we prove the well-posedness of \eqref{lin-phi}.

\textbf{4.}
\textbf{Claim:} For each $(\tilde{\phi},\tilde{\psi})\in\mathcal{K}^{f_\ast}_{\delta_3\sigma,\delta_4\sigma}$, the linear boundary value problem
\eqref{lin-phi} associated with $(\tilde{\phi},\tilde{\psi})$ has a unique solution $\phi\in C^{2,\alpha}(\overline{\mathcal{N}_{L,f_\ast}^+})$, and the solution satisfies
\begin{equation}\label{3D-phi-est}
\|\phi\|_{k,\alpha,\mathcal{N}_{L,f_\ast}^+}
\le C\left(\|{\mathfrak F}\|_{k-1,\alpha,\mathcal{N}_{L,f_\ast}^+}+\|\mathfrak{H}\|_{k-1,\alpha,\Gamma_\mathrm{w}^L}+\|{\varphi_\mathrm{en}}\|_{k,\alpha,\Gamma_\mathrm{en}^+}\right)\quad\mbox{for }k=1,2.
\end{equation}
Moreover, the solution $\phi$ is axisymmetric, and it satisfies
$$\partial_{x_1 x_1}\phi\equiv 0\quad\mbox{on}\quad
\Gamma_\mathrm{en}^\epsilon\cup\Gamma^{L,f_{\ast}}_\mathrm{ex}.$$

\begin{proof}[Verification of Claim.]
To homogenize the boundary condition on $\Gamma_\mathrm{en}^+$, we introduce a lifting function $\varphi_\mathrm{en}^*$ defined by
\begin{equation}\label{def-var-ast}
{\varphi_\mathrm{en}^{\ast}}(\mathbf{x}):=\eta(x_1)\varphi_\mathrm{en}\left(\frac{r+R(x_1)-2f_\ast(x_1)}{2R(x_1)-2f_{\ast}(x_1)}\right)\quad\mbox{for}\quad\mathbf{x}\in\mathcal{N}_{L,f_\ast}^+,
\end{equation}
where $\varphi_\mathrm{en}$ is given by \eqref{def-varphi-en} and $\eta$ is a $C^{\infty}$-function satisfying
\begin{equation}\label{eta-def}
\eta=1\quad\mbox{for } x_1<\frac{L}{10},\quad \eta=0\quad\mbox{for }x_1>\frac{9L}{10},\quad |\eta'(x_1)|\le2,\quad|\eta''(x_1)|\le 2.
\end{equation}
Set $\phi_\mathrm{hom}:=\phi-\varphi_\mathrm{en}^{\ast}$. Then the linear boundary value problem \eqref{lin-phi} can be rewritten as
\begin{equation}\label{3D-hom-eq}
\left\{\begin{aligned}
\mathcal{L}(\phi_\mathrm{hom})
=\mathfrak{F}^{\ast}\quad&\mbox{in}\quad\mathcal{N}_{L,f_\ast}^+,\\
\phi_\mathrm{hom}=0\quad&\mbox{on}\quad\partial\mathcal{N}_{L,f_\ast}^+\setminus \Gamma_\mathrm{w}^L,\\
\nabla\phi_\mathrm{hom}\cdot\mathbf{n}_R=\mathfrak{h}\quad&\mbox{on}\quad\Gamma_\mathrm{w}^{L},
\end{aligned}\right.
\end{equation}
for $\mathfrak{F}^{\ast}, \mathfrak{h}$ defined by
\begin{equation}\label{def-F-ast}\begin{aligned}
\mathfrak{F}^{\ast}&:=\operatorname{div}\mathfrak{F}-\sum_{i=1}^3\partial_i\left(\sum_{j=1}^3a_{ij}\partial_{j}\varphi_\mathrm{en}^{\ast}\right),\\
\mathfrak{h}&:=\frac{R'(x_1)\left[u_0+\eta'(x_1)\varphi_\mathrm{en}(1)\right]}{\sqrt{1+|R'(x_1)|^2}},
\end{aligned}\end{equation}
where $a_{ij}$ $(i=1,2,3)$ are given by \eqref{aij-def}.
By the standard elliptic theory, the linear boundary value problem \eqref{3D-hom-eq} has a unique solution $\phi_\mathrm{hom}\in C^{1,\alpha}(\overline{\mathcal{N}_{L,f_\ast}^+})\cap C^{2,\alpha}(\mathcal{N}_{L,f_{\ast}}^+)$.

To obtain a uniform $C^0$-estimate of $\phi_\mathrm{hom}$ for all $L$, we define a function $\mathfrak{M}$ by
$$\mathfrak{M}(\mathbf{x}):=25\left(\frac{\|\mathfrak{F}^\ast\|_{\alpha,\mathcal{N}_{L,f_\ast}}}{{\rho_0^+}}+\|\mathfrak{h}\|_{\alpha,\Gamma_\mathrm{w}^L}\right)\left(2-e^{-4r}\right).$$
Since $\rho_0^+>\nu$ in $\mathcal{N}_{L,f_{\ast}}^+$ for some $\nu>0$, $\mathfrak{M}$ is well-defined.
A direct computation yields
\begin{equation*}
\left\{\begin{aligned}
\mathcal{L}(\mathfrak{M}\pm\phi_\mathrm{hom})\le0\quad&\mbox{in}\quad\mathcal{N}_{L,f_\ast}^+,\\
\mathfrak{M}\pm\phi_\mathrm{hom}=\mathfrak{M}\ge0\quad&\mbox{on}\quad\partial\mathcal{N}_{L,f_\ast}^+\setminus\Gamma_\mathrm{w}^L,\\
\nabla(\mathfrak{M}\pm\phi_\mathrm{hom})\cdot\mathbf{n}_R\ge 0\quad&\mbox{on}\quad\Gamma_\mathrm{w}^L.
\end{aligned}\right.
\end{equation*}
Since $\mathcal{L}$ is uniformly elliptic, the comparison principle implies
$-\mathfrak{M}\le\phi_\mathrm{hom}\le\mathfrak{M}$ in $\mathcal{N}_{L,f_{\ast}}^+,$ from which it follows that
$$\|\phi_\mathrm{hom}\|_{0,\mathcal{N}_{L,f_\ast}^+}\le C\left(\|\mathfrak{F}^{\ast}\|_{\alpha,\mathcal{N}_{L,f_\ast}^+}+\|\mathfrak{h}\|_{\alpha,\Gamma_\mathrm{w}^L}\right).$$
Then we obtain the estimate
$$\|\phi_\mathrm{hom}\|_{1,\alpha,\mathcal{N}_{L,f_\ast}^+}\le C\left(\|\mathfrak{F}^{\ast}\|_{\alpha,\mathcal{N}_{L,f_\ast}^+}+\|\mathfrak{h}\|_{\alpha,\Gamma_\mathrm{w}^L}\right).$$
To obtain a $C^{2,\alpha}$-estimate of $\phi_\mathrm{hom}$ up to the boundary, we use the method of reflection.
By the compatibility conditions of $(S_\ast,B_\ast,\Lambda_\ast,\tilde{\phi})$ given in \eqref{Ent-Ang-set} and \eqref{ell-set}, and $\partial_{x_1}\psi\equiv0$ on $\Gamma_\mathrm{en}^\epsilon\cup\Gamma^{L,f_{\ast}}_\mathrm{ex}$ given from \eqref{3D-psi-BC}, we have
\begin{equation}\label{F-0-ex}
\operatorname{div}\mathfrak{F}=\operatorname{div}\mathbf{F}(S_{\ast}-S_0^+,B_\ast-B_0^+,D\tilde{\phi},\mathbf{t}(r,{\psi},D{\psi},\Lambda_{\ast})-\beta_0\mathbf{e}_\theta)\equiv0\quad\mbox{on }\Gamma_\mathrm{en}^\epsilon\cup\Gamma^{L,f_{\ast}}_\mathrm{ex}.
\end{equation}
From the definition of $\varphi_\mathrm{en}^{\ast}$ given in \eqref{def-var-ast}, the compatibility conditions of $f_{\ast}$ given in \eqref{F-set}, and the definition of $\eta$ given in \eqref{eta-def}, it can be directly checked that
\begin{equation}\label{partial-varphi-en}
\partial_{ii}\varphi_\mathrm{en}^{\ast}\equiv 0\quad\mbox{on}\quad \Gamma_\mathrm{en}^\epsilon\cup\Gamma^{L,f_{\ast}}_\mathrm{ex},\quad i=1,2,3.
\end{equation}
It follows from \eqref{F-0-ex}--\eqref{partial-varphi-en} and the definition of $\mathfrak{F}^{\ast}, \mathfrak{h}$ given in \eqref{def-F-ast} that
\begin{equation*}
\mathfrak{F}^{\ast}\equiv 0\quad\mbox{on}\quad \Gamma_\mathrm{en}^\epsilon\cup\Gamma^{L,f_{\ast}}_\mathrm{ex}, \qquad \mathfrak{h}\equiv 0 \quad \mbox{on} \quad \Gamma_\mathrm{w}^L\cap(\Gamma_\mathrm{en}^\epsilon\cup\Gamma_\mathrm{ex}^{L,f_\ast}).
\end{equation*}
Then we can apply the method of reflection to obtain the estimate
$$\|\phi_\mathrm{hom}\|_{2,\alpha,\mathcal{N}_{L,f_\ast}^+}\le C\left(\|\mathfrak{F}^{\ast}\|_{\alpha,\mathcal{N}_{L,f_\ast}^+}+\|\mathfrak{H}\|_{1,\alpha,\Gamma_\mathrm{w}^L}\right),$$
and this implies that the linear boundary value problem \eqref{lin-phi} has a unique solution $\phi=\phi_\mathrm{hom}+\varphi_\mathrm{en}^{\ast}\in C^{2,\alpha}(\overline{\mathcal{N}_{L,f_{\ast}}^+})$ that satisfies
\begin{equation*}
\|\phi\|_{k,\alpha,\mathcal{N}_{L,f_{\ast}}^+}\le C\left(\|\mathfrak{F}\|_{k-1,\alpha,\mathcal{N}_{L,f_\ast}^+}+\|\mathfrak{H}\|_{1,\alpha,{\Gamma}_\mathrm{w}^L}+\|\varphi_\mathrm{en}\|_{k,\alpha,\Gamma_\mathrm{en}^+}\right)\quad\mbox{for }k=1,2.
\end{equation*}

Finally, we verify that the solution obtained above is axisymmetric by a rotation-invariance argument. For any $\vartheta\in[0,2\pi)$, define a function $\phi_\mathrm{hom}^{\vartheta}$ by
$$\phi_\mathrm{hom}^{\vartheta}(\mathbf{x}):=\phi_\mathrm{hom}(x_1,x_2\cos\vartheta-x_3\sin\vartheta,x_2\sin\vartheta+x_3\cos\vartheta).$$
Then, we have $\phi_\mathrm{hom}^{\vartheta}=\phi_\mathrm{hom}$ on $\partial \mathcal{N}_{L,f_\ast}^+$.
Using \eqref{aij-def}, it can be directly checked that $\mathcal{L}(\phi_\mathrm{hom}^{\vartheta})=\mathcal{L}(\phi_\mathrm{hom})$ holds in $\mathcal{N}_{L,f_\ast}^+$. This implies that $\phi_\mathrm{hom}^{\vartheta}$ is a solution to \eqref{3D-hom-eq}. By the uniqueness of the solution to \eqref{3D-hom-eq}, we conclude that
$\phi_\mathrm{hom}=\phi_\mathrm{hom}^{\vartheta}.$
Therefore $\phi_\mathrm{hom}$ is axisymmetric, and this implies that $\phi$ is axisymmetric.

Since $\phi_\mathrm{hom}\equiv 0$ and $\sum_{i=1}^3 a_{ii}\partial_{ii}\varphi_\mathrm{en}^{\ast}\equiv 0$ on $\Gamma_\mathrm{en}^\epsilon\cup\Gamma^{L,f_{\ast}}_\mathrm{ex}$, we have
\begin{equation}\label{phi-0-ex}
\partial_{ii}\phi\equiv0 \quad\mbox{on}\quad\Gamma_\mathrm{en}^\epsilon\cup\Gamma^{L,f_{\ast}}_\mathrm{ex}\quad\mbox{for}\quad i=2,3.
\end{equation}
It follows from \eqref{F-0-ex} and \eqref{phi-0-ex} that
$\mathcal{L}(\phi)=a_{11}\partial_{x_1 x_1}\phi\equiv 0$ on $\Gamma_\mathrm{en}^\epsilon\cup\Gamma^{L,f_{\ast}}_\mathrm{ex}.$
Since $a_{11}>0$, we conclude that
$\partial_{x_1 x_1}\phi\equiv 0$ on $\Gamma_\mathrm{en}^\epsilon\cup\Gamma^{L,f_{\ast}}_\mathrm{ex}.$
The verification of claim is completed.
\end{proof}

\textbf{5.}
For fixed $(\mathcal{W}_{\ast},f_{\ast})\in\mathcal{P}_{\delta_1\sigma}\times\mathcal{F}_{\delta_2\sigma}(\frac{1}{2})$,
define an iteration mapping $\mathcal{I}^{f_{\ast},\mathcal{W}_{\ast}}:\mathcal{K}^{f_\ast}_{\delta_3\sigma,\delta_4\sigma}\rightarrow [C^{2,\alpha}(\overline{\mathcal{N}_{L,f_\ast}^+})]^2$ by
\begin{equation*}
\mathcal{I}^{f_\ast,\mathcal{W}_{\ast}}(\tilde{\phi},\tilde{\psi})=(\phi,\psi),
\end{equation*}
where $(\phi,\psi)$ is the solution to \eqref{3D-psi-BC} and \eqref{lin-phi} associated with $(\tilde{\phi},\tilde{\psi})$.
We now show that, for suitable choices of $\delta_3$, $\delta_4$, and sufficiently small $\sigma$, the map $\mathcal{I}^{f_*,\mathcal{W}_*}$ sends $\mathcal{K}^{f_\ast}_{\delta_3\sigma,\delta_4\sigma}$ into itself and is a contraction.

A direct computation shows that there exists a constant $\epsilon_1\in(0,\frac{1}{8})$ depending only on the data such that if
\begin{equation}\label{epsilon1}
\delta_1\sigma+\delta_2\sigma+\delta_3\sigma+\delta_4\sigma\le \epsilon_1,
\end{equation}
then we have
\begin{equation}\label{est-lem}
\left.
\begin{aligned}
&\|{\mathfrak F}\|_{1,\alpha,\mathcal{N}_{L,f_\ast}^+}\le C\left(\delta_1\sigma+(\delta_3\sigma)^2+\delta_4\sigma\right),\\
&\|\mathfrak{H}\|_{1,\alpha,\Gamma_\mathrm{w}^L}\le C\sigma,\\
&\|\tilde{\mathcal{A}}\|_{1,\alpha,\Gamma_\mathrm{cd}^{L,f_\ast}}\le C\left(\delta_1\sigma+\beta_0^2\delta_2\sigma+(\delta_2\sigma)^2\right),\\
&\|\tilde{G}\|_{\alpha,\mathcal{N}_{L,f_\ast}^+}\le C\left(\delta_1\sigma+\beta_0^2\delta_3\sigma+\beta_0^2\delta_4\sigma\right),\\
\end{aligned}\right.
\end{equation}
where ${\mathfrak F}$, $\tilde{\mathcal{A}}$, and $\tilde{G}$ are given by \eqref{FF-def}, and \eqref{def-GB}.
It follows from \eqref{psi-est-2}, \eqref{3D-phi-est}, and \eqref{est-lem} that
\begin{equation}\label{pp-est-M3}
\left.\begin{aligned}
&\|\phi\|_{2,\alpha,\mathcal{N}_{L,f_\ast}^+}\le C_1^{\flat}\left(\delta_1\sigma+(\delta_3\sigma)^2+\delta_4\sigma+\sigma\right),\\
&\|\psi\|_{2,\alpha,\mathcal{N}_{L,f_{\ast}}^+}\le C_1^{\flat}\left(\delta_1\sigma+(\delta_2\sigma)^2+\beta_0^2\delta_2\sigma+\beta_0^2\delta_3\sigma+\beta_0^2\delta_4\sigma\right),
\end{aligned}\right.
\end{equation}
for a constant $C_1^{\flat}>0$ depending on the data but not on $L$.
We choose $\delta_3$, $\delta_4$, $\beta^{\ast\ast}$ and $\sigma_6^{\ast}$ as
\begin{equation}\label{3D-sigma8}
\begin{aligned}
&\delta_3=5C_1^{\flat}(1+\delta_1+\delta_4),\quad \delta_4=5C_1^{\flat}\delta_1,\quad \beta^{\ast\ast}=\min\left\{\beta^\natural,\sqrt{\frac{1}{5C_1^\flat}},\sqrt{\frac{\delta_4}{5C_1^\flat \delta_2}},\sqrt{\frac{\delta_4}{5C_1^\flat \delta_3}}\right\},\\
&\mbox{and}\quad \sigma_6^{\ast}=\min\left\{\frac{\epsilon_1}{\delta_1+\delta_2+\delta_3+\delta_4},\frac{\delta_4}{5C_1^{\flat}\delta_2^2},\frac{1}{5C_1^{\flat}\delta_3},\frac{1}{32},\frac{1}{32\delta_2}\right\},
\end{aligned}
\end{equation}
where $\epsilon_1$ is given in \eqref{epsilon1},
so that \eqref{pp-est-M3} implies that
$(\phi,\psi)\in\mathcal{K}^{f_\ast}_{\delta_3\sigma,\delta_4\sigma}$ for $|\beta_0|\le\beta^{\ast\ast}, \sigma\le\sigma_6^{\ast}.$
Under such choices of $(\delta_3, \delta_4, \sigma_6^{\ast})$, the iteration mapping
$\mathcal{I}^{f_{\ast},\mathcal{W}_{\ast}}$ maps $\mathcal{K}^{f_\ast}_{\delta_3\sigma,\delta_4\sigma}$ into itself if $|\beta_0|\le\beta^{\ast\ast}, \sigma\le\sigma_6^{\ast}$.
Furthermore, $(\phi,\psi)$ satisfies the estimate
\begin{equation*}
\|\phi\|_{2,\alpha,\mathcal{N}_{L,f_{\ast}}^+}+\|\psi\|_{2,\alpha,\mathcal{N}_{L,f_{\ast}}^+}\le (\delta_3+\delta_4)\sigma\le C(\delta_1+1)\sigma.
\end{equation*}

Now we show that $\mathcal{I}^{f_\ast,\mathcal{W}_{\ast}}$ is a contraction mapping if $\sigma$ is a small constant depending only on the data and $(\delta_1,\delta_2)$.

For each $j=1,2$, let
\begin{equation*}\left\{
\begin{aligned}
&(\phi^{(j)},\psi^{(j)}):=\mathcal{I}^{f_\ast,\mathcal{W}_\ast}(\tilde{\phi}^{(j)},\tilde{\psi}^{(j)})\quad\mbox{for }(\tilde{\phi}^{(j)},\tilde{\psi}^{(j)})\in\mathcal{K}^{f_\ast}_{\delta_3\sigma,\delta_4\sigma},\\
&\mathbf{F}_{\ast}:=\mathbf{F}(S_{\ast}-S_0^+,B_{\ast}-B_0^+,D\tilde{\phi}^{(1)},\mathbf{t}(r,\psi^{(1)},D\psi^{(1)},\Lambda_{\ast})-\beta_0\mathbf{e}_\theta)\\
&\qquad -\mathbf{F}(S_{\ast}-S_0^+,B_{\ast}-B_0^+,D\tilde{\phi}^{(2)},\mathbf{t}(r,\psi^{(2)},D\psi^{(2)},\Lambda_{\ast})-\beta_0\mathbf{e}_\theta),\\
&G_{\ast}:=G(\mathcal{W}_{\ast},\partial_r\mathcal{W}_{\ast},\mathbf{t}(r,\tilde{\psi}^{(1)},D\tilde{\psi}^{(1)},\Lambda_{\ast}),D\tilde{\phi}^{(1)}+D\varphi_0)\\
&\qquad -G(\mathcal{W}_{\ast},\partial_r\mathcal{W}_{\ast},\mathbf{t}(r,\tilde{\psi}^{(2)},D\tilde{\psi}^{(2)},\Lambda_{\ast}),D\tilde{\phi}^{(2)}+D\varphi_0),
\end{aligned}\right.
\end{equation*}
where $\mathbf{F}$ and $G$ are given by \eqref{def-F} and \eqref{def-H-G}, respectively.
A direct computation shows that there exists a constant $\epsilon_2\in(0,\epsilon_1]$ depending only on the data such that if $$\delta_1\sigma+\delta_2\sigma+\delta_3\sigma+\delta_4\sigma\le\epsilon_2,$$ then we have
\begin{equation}\label{diff-est}
\left.\begin{aligned}
&\|\mathbf{F}_{\ast}\|_{1,\alpha,\mathcal{N}_{L,f_\ast}^+}\le C\|\psi^{(1)}-\psi^{(2)}\|_{2,\alpha,\mathcal{N}_{L,f_{\ast}}^+}+C(\delta_1+1)\sigma\|\tilde{\phi}^{(1)}-\tilde{\phi}^{(2)}\|_{2,\alpha,\mathcal{N}_{L,f_\ast}^+},\\
&\|G_{\ast}\|_{\alpha,\mathcal{N}_{L,f_\ast}^+}\le C\left[\beta_0^2+(\delta_1+1)\sigma\right]\left(\|\tilde{\psi}^{(1)}-\tilde{\psi}^{(2)}\|_{2,\alpha,\mathcal{N}_{L,f_{\ast}}^+}+\|\tilde{\phi}^{(1)}-\tilde{\phi}^{(2)}\|_{2,\alpha,\mathcal{N}_{L,f_\ast}^+}\right).
\end{aligned}\right.
\end{equation}
Then it follows from \eqref{psi-est-2}, \eqref{3D-phi-est}, and \eqref{diff-est} that
\begin{equation*}
\begin{aligned}
\|\phi^{(1)}&-\phi^{(2)}\|_{2,\alpha,\mathcal{N}_{L,f_\ast}^+}+\|\psi^{(1)}-\psi^{(2)}\|_{2,\alpha,\mathcal{N}_{L,f_{\ast}}^+}\\
&\le C_2^{\flat}\left[\beta_0^2+(\delta_1+1)\sigma\right]\left(\|\tilde{\psi}^{(1)}-\tilde{\psi}^{(2)}\|_{2,\alpha,\mathcal{N}_{L,f_\ast}^+}+\|\tilde{\phi}^{(1)}-\tilde{\phi}^{(2)}\|_{2,\alpha,\mathcal{N}_{L,f_\ast}^+}\right)
\end{aligned}
\end{equation*}
for a constant $C_2^{\flat}>0$ depending only on the data but not on $L$.
Choose $\sigma_6$ and $\beta^\ast$ as
\begin{equation}\label{Sigma6}
\sigma_6=\min\left\{\sigma_6^{\ast},\frac{1}{2C_2^{\flat}(\delta_1+1)},\frac{\epsilon_2}{\delta_1+\delta_2+\delta_3+\delta_4}\right\},\quad\beta^\ast=\min\left\{\beta^{\ast\ast},\sqrt{\frac{1}{3C_2^\flat}}\right\}
\end{equation}
with $\sigma_6^{\ast}$ and $\beta^{\ast\ast}$ defined in \eqref{3D-sigma8}. Thus if $|\beta_0|\le\beta^\ast, \sigma\le \sigma_6$, then the mapping $\mathcal{I}^{f_{\ast},\mathcal{W}_\ast}$ is a contraction mapping so that $\mathcal{I}^{f_{\ast},\mathcal{W}_{\ast}}$ has a unique fixed point in $\mathcal{K}^{f_\ast}_{\delta_3\sigma,\delta_4\sigma}$.
This proves the existence and uniqueness of a solution to \eqref{Fixed-BVP}.
The proof of Lemma~\ref{Pro-fix-S} is completed.
\end{proof}

Next, we prove the existence and uniqueness of a solution to Problem~\ref{Prob4-Fix-S}, which is a free boundary problem.
\begin{proof}[Proof of Lemma~\ref{Lem-S-free}.]
\textbf{1.}
Now we choose $\delta_2$ from \eqref{F-set}, and adjust $\sigma$ to find a solution of Problem~\ref{Prob4-Fix-S} by the method of iteration.

Given $f_{\ast}\in\mathcal{F}_{\delta_2\sigma}(\frac{1}{2})$ and $(S_{\ast},B_{\ast},\Lambda_{\ast})\in\mathcal{P}_{\delta_1\sigma}$, let $(\varphi,\psi)\in [C^{2,\alpha}(\overline{\mathcal{N}_{L,f_{\ast}}^+})]^2$ be the unique solution to the boundary value problem \eqref{Fixed-BVP}. Note that $(\varphi,\psi)$ satisfies the estimate \eqref{fix-est} given in Lemma~\ref{Pro-fix-S}.
For simplicity, we set
\begin{equation*}
\begin{aligned}
&\rho^{\ast}:=\varrho(S_{\ast},B_{\ast},\mathbf{q}(r,\psi,D\psi,D\varphi,\Lambda_{\ast})),\\
&\mathbf{u}^{\ast}:=\left(\partial_{x_1}\varphi+\frac{1}{r}\partial_r(r\psi)\right)\mathbf{e}_{x_1}+\left(\partial_r\varphi-\partial_{x_1}\psi\right)\mathbf{e}_r,
\end{aligned}
\end{equation*}
where $\varrho$ is given in \eqref{def-H-G}.
From the first equation in \eqref{Fixed-BVP}, we have
\begin{equation}\label{conti-u}
\partial_{x_1}(r\rho^{\ast}\mathbf{u}^{\ast}\cdot\mathbf{e}_{x_1})+\partial_r(r\rho^{\ast}\mathbf{u}^{\ast}\cdot\mathbf{e}_r)=0.
\end{equation}
As in the proof of Lemma~\ref{Pro-fix-S}, there exists a constant $\epsilon_3\in(0,1]$ depending only on the data such that if
\begin{equation*}\label{epsilon3}
\delta_1\sigma+\delta_2\sigma+\sigma\le\epsilon_3,
\end{equation*}
then we have
\begin{equation}\label{rhou-est-1}
\|\rho^{\ast}\mathbf{u}^{\ast}-\rho_0^+u_0\mathbf{e}_{x_1}\|_{1,\alpha,\mathcal{N}_{L,f_\ast}^+}\le C_{\star}(\delta_1+1)\sigma,
\end{equation}
where the constant $C_{\star}>0$ depends only on the data but not on $L$.
If $\sigma\in(0,\sigma_6]$ satisfies
\begin{equation*}
\sigma\le \frac{\rho_0^+u_0}{32C_{\star}(\delta_1+1)},
\end{equation*}
then we obtain from \eqref{rhou-est-1} that
\begin{equation}\label{rhou-est-2}
\|\rho^{\ast}\mathbf{u}^{\ast}-\rho_0^+u_0\mathbf{e}_{x_1}\|_{1,\alpha,\mathcal{N}_{L,f_\ast}^+}\le \frac{\rho_0^+u_0}{32}.
\end{equation}

For each $x_1\in[0,L]$, we choose $f(x_1)\in\mathbb{R}^+$ to satisfy
\begin{equation}\label{3D-f-est1}
\int_{f(x_1)}^{f_\ast(x_1)} t \rho_0^+u_0 dt=\int_{1/2}^1 t\rho^{\ast}\mathbf{u}^{\ast}\cdot\mathbf{e}_{x_1}(0,t)dt-\int_{f_{\ast}(x_1)}^{R(x_1)} t\rho^{\ast}\mathbf{u}^{\ast}\cdot\mathbf{e}_{x_1}(x_1,t)dt.
\end{equation}
If $f\equiv f_{\ast}$, then \eqref{3D-f-est1} yields that
\begin{equation}\label{fix-f-eq}
\int_{1/2}^1 t\rho^{\ast}\mathbf{u}^{\ast}\cdot\mathbf{e}_{x_1}(0,t)dt=\int_{f(x_1)}^{R(x_1)} t\rho^{\ast}\mathbf{u}^{\ast}\cdot\mathbf{e}_{x_1}(x_1,t)dt.
\end{equation}
Differentiating \eqref{fix-f-eq} with respect to $x_1$, and using the equation \eqref{conti-u} and the fact that $$R'(x_1)\mathbf{u}^\ast\cdot\mathbf{e}_{x_1}(x_1,R(x_1))=\mathbf{u}^\ast\cdot\mathbf{e}_r(x_1,R(x_1)),$$
we have
\begin{equation*}
f'(x_1)=\frac{\mathbf{u}^{\ast}\cdot\mathbf{e}_r}{\mathbf{u}^{\ast}\cdot\mathbf{e}_{x_1}}(x_1,f(x_1))=\frac{\partial_r\varphi-\partial_{x_1}\psi}{\partial_{x_1}\varphi+\frac{1}{r}\partial_r(r\psi)}(x_1,f(x_1)).
\end{equation*}
Also, we have $f(0)=\frac{1}{2}$.
Thus $f$ satisfies the free boundary condition \eqref{g-free-cut} for $0<x_1<L$.

Since $\rho_0^+u_0>0$, \eqref{3D-f-est1} is equivalent to
\begin{equation}\label{3D-f-est2}
f^2(x_1)=f_\ast^2(x_1)+\frac{2}{\rho_0^+u_0}\int_{f_\ast(x_1)}^{R(x_1)} t\rho^{\ast}\mathbf{u}^{\ast}\cdot\mathbf{e}_{x_1}(x_1,t)dt-\frac{2}{\rho_0^+u_0}\int_{1/2}^{1} t\rho^{\ast}\mathbf{u}^{\ast}\cdot\mathbf{e}_{x_1}(0,t)dt.
\end{equation}
By \eqref{rhou-est-1} and \eqref{rhou-est-2},
\begin{equation}
\label{3D-f-est4}
\mbox{RHS of \eqref{3D-f-est2}}\ge
\frac{1}{32}>0\quad\mbox{if}\quad\sigma\le\min\left\{\frac{\epsilon_3}{\delta_1+\delta_2+1}, \frac{\rho_0^+u_0}{32 C_{\star}(\delta_1+1)}\right\}=:\sigma_5'.
\end{equation}
Then the function $f:[0,L]\rightarrow\mathbb{R}^+$ given by
\begin{equation}\label{3D-f-est3}
f(x_1):=\sqrt{f_\ast^2(x_1)+\frac{2}{\rho_0^+u_0}\int_{f_\ast(x_1)}^{R(x_1)} t\rho^{\ast}\mathbf{u}^{\ast}\cdot\mathbf{e}_{x_1}(x_1,t)dt-\frac{2}{\rho_0^+u_0}\int_{1/2}^1 t\rho^{\ast}\mathbf{u}^{\ast}\cdot\mathbf{e}_{x_1}(0,t)dt}
\end{equation}
is well defined and satisfies \eqref{3D-f-est1}.
Moreover, $f(0)=\frac{1}{2}$, $f'(0)=f'(L)=0$.
A direct computation yields the estimate
\begin{equation}\label{3D-f-est7}
\left\|f-\frac{1}{2}\right\|_{2,\alpha,(0,L)}\le C_{\star\star}(\delta_1+1)\sigma
\end{equation}
for a constant $C_{\star\star}>0$ depending only on the data but not on $L$.

We define an iteration mapping $\mathcal{I}^{\mathcal{W}_{\ast}}:\mathcal{F}_{\delta_2\sigma}(\frac{1}{2})\rightarrow C^{2,\alpha}([0,L])$ by
$$\mathcal{I}^{\mathcal{W}_{\ast}}(f_{\ast})=f$$
for $f$ given by \eqref{3D-f-est3}.
Choose $\delta_2$ and $\sigma_5^{\ast}$ as
\begin{equation}\label{Sigma5star}
\delta_2=C_{\star\star}(\delta_1+1)\quad\mbox{and}\quad\sigma_5^{\ast}=\min\left\{\sigma_6,\sigma_5'\right\}
\end{equation}
for $\sigma_6$ and $\sigma_5'$ defined by \eqref{Sigma6} and \eqref{3D-f-est4}, respectively.
Under such choices of $(\delta_2,\sigma_5^{\ast})$, the iteration mapping $\mathcal{I}^{\mathcal{W}_{\ast}}$ maps $\mathcal{F}_{\delta_2\sigma}(\frac{1}{2})$ into itself if $\sigma\le\sigma_5^{\ast}$.

\textbf{2.}
The iteration set $\mathcal{F}_{\delta_2\sigma}(\frac{1}{2})$ given by \eqref{F-set} is a convex and compact subset of $C^{2,\alpha/2}([0,L])$.
For each fixed $\mathcal{W}_{\ast}\in \mathcal{P}_{\delta_1\sigma}$, the iteration map $\mathcal{I}^{\mathcal{W}_{\ast}}$ maps $\mathcal{F}_{\delta_2\sigma}(\frac{1}{2})$ into itself where $\delta_2$ is chosen by \eqref{Sigma5star}, and $\sigma\le \sigma_5^{\ast}$ for $\sigma_5^{\ast}$ from \eqref{Sigma5star}.

Suppose that a sequence $\{f_{\ast}^{(k)}\}_{k=1}^{\infty}\subset\mathcal{F}_{\delta_2\sigma}(\frac{1}{2})$ converges in $C^{2,\alpha/2}([0,L])$ to $f_{\ast}^{(\infty)}\in\mathcal{F}_{\delta_2\sigma}(\frac{1}{2})$.
For each $k\in\mathbb{N}\cup\{\infty\}$, set
\begin{equation*}
f^{(k)}:=\mathcal{I}^{\mathcal{W}_{\ast}}(f_{\ast}^{(k)}).
\end{equation*}
Let $\mathcal{U}^{(k)}:=(\varphi^{(k)},\psi^{(k)})\in [C^{2,\alpha}(\overline{\mathcal{N}_{L,f_{\ast}^{(k)}}^+})]^2$ be the unique solution of \eqref{Fixed-BVP} associated with $f_{\ast}=f_{\ast}^{(k)}$.
Define a transformation $T^{(k)}:\overline{\mathcal{N}_{L,f_{\ast}^{(\infty)}}^+}\rightarrow \overline{\mathcal{N}_{L,f_{\ast}^{(k)}}^+}$ by
\begin{equation*}
T^{(k)}(x_1,r,\theta)=\left(x_1,\frac{R(x_1)-f_\ast^{(k)}(x_1)}{R(x_1)-f_\ast^{(\infty)}(x_1)}\left(r-R(x_1)\right)+R(x_1),\theta\right).\end{equation*}
Then $\{\mathcal{U}^{(k)}\circ T^{(k)}\}_{k=1}^{\infty}$ is sequentially compact in $[C^{2,\alpha/2}(\overline{\mathcal{N}_{L,f_\ast^{(\infty)}}^+})]^2$ and the limit of each convergent subsequence of $\{\mathcal{U}^{(k)}\circ T^{(k)}\}_{k=1}^{\infty}$ in $[C^{2,\alpha/2}(\overline{\mathcal{N}_{L,f_\ast^{(\infty)}}^+})]^2$ solves \eqref{Fixed-BVP} associated with $f_{\ast}=f_{\ast}^{(\infty)}$.
Due to the uniqueness of \eqref{Fixed-BVP}, the entire sequence $\{\mathcal{U}^{(k)}\circ T^{(k)}\}_{k=1}^{\infty}$ converges in $[C^{2,\alpha/2}(\overline{\mathcal{N}_{L,f_\ast^{(\infty)}}^+})]^2$.
It follows from \eqref{3D-f-est3}--\eqref{3D-f-est7} that $f^{(k)}$ converges to $f^{(\infty)}$ in $C^{2,\alpha/2}([0,L])$.
This implies that $\mathcal{I}^{\mathcal{W}_{\ast}}(f_{\ast}^{(k)})$ converges to $\mathcal{I}^{\mathcal{W}_{\ast}}(f_{\ast}^{(\infty)})$ in $C^{2,\alpha/2}([0,L])$.
Thus $\mathcal{I}^{\mathcal{W}_{\ast}}$ is a continuous map in $C^{2,\alpha/2}([0,L])$.
Applying the Schauder fixed point theorem yields that $\mathcal{I}^{\mathcal{W}_{\ast}}$ has a fixed point $f\in\mathcal{F}_{\delta_2\sigma}(\frac{1}{2})$.
For such $f$, let $(\varphi,\psi)\in [C^{2,\alpha}(\overline{\mathcal{N}_{L,f}^+})]^2$ be the unique solution to the fixed boundary problem \eqref{Fixed-BVP} associated with $f_{\ast}=f$.
Then $(f,\varphi,\psi)$ is a solution to Problem~\ref{Prob4-Fix-S}.
It follows from \eqref{fix-est} and \eqref{3D-f-est7} that
\begin{equation*}
\left\|f-\frac{1}{2}\right\|_{2,\alpha,(0,L)}+\|\varphi-\varphi_0\|_{2,\alpha,\mathcal{N}_{L,f}^+}+\|\psi\|_{2,\alpha,\mathcal{N}_{L,f}^+}\le C\left(\delta_1+1\right)\sigma.
\end{equation*}

\textbf{3.} Finally, it remains to prove the uniqueness of a solution to Problem~\ref{Prob4-Fix-S}.
For a fixed $\mathcal{W}_{\ast}\in\mathcal{P}_{\delta_1\sigma}$, let $(f^{(1)},\varphi^{(1)},\psi^{(1)})$ and $(f^{(2)},\varphi^{(2)},\psi^{(2)})$ be two solutions to Problem~\ref{Prob4-Fix-S}, and suppose that each solution satisfies the estimate given in \eqref{3D-pps-est} of Lemma~\ref{Lem-S-free}.
Define a transformation $\mathfrak{T}:\overline{\mathcal{N}_{L,f^{(1)}}^+}\rightarrow \overline{\mathcal{N}_{L,f^{(2)}}^+}$ by
\begin{equation}\label{TT}
\mathfrak{T}(x_1,r,\theta)=\left(x_1,\frac{R(x_1)-f^{(2)}(x_1)}{R(x_1)-f^{(1)}(x_1)}\left(r-R(x_1)\right)+R(x_1),\theta\right).
\end{equation}
Since $f^{(j)}\ge \frac{3}{8}>0$ $(j=1,2)$, the transformation $\mathfrak{T}$ is invertible and
$$\mathfrak{T}^{-1}(x_1,r,\theta)=\left(x_1,\frac{R(x_1)-f^{(1)}(x_1)}{R(x_1)-f^{(2)}(x_1)}\left(r-R(x_1)\right)+R(x_1),\theta\right).$$
Set
\begin{equation*}
\left\{\begin{aligned}
&\widetilde{\phi}:=\varphi^{(1)}-\left(\varphi^{(2)}\circ\mathfrak{T}\right),\quad\widetilde{\psi}:=\psi^{(1)}-\left(\psi^{(2)}\circ\mathfrak{T}\right),\quad\widetilde{Z}:=r\psi^{(1)}-\left((r\psi^{(2)})\circ\mathfrak{T}\right),\\
& \widetilde{f}:=f^{(1)}-f^{(2)},\quad \widetilde{\Xi}:=\Xi_{\ast}-\left(\Xi_{\ast}\circ\mathfrak{T}\right),
\end{aligned}\right.
\end{equation*} where $$\Xi_\ast:=\left(S_\ast,B_\ast,\frac{\Lambda_\ast^2}{2}\right).$$
We first rewrite the nonlinear boundary value problem \eqref{S-Free-BP} for $(\varphi^{(2)},\psi^{(2)})$ in $\mathcal{N}_{L,f^{(2)}}^+$ as a nonlinear boundary value problem for $(\varphi^{(2)}\circ\mathfrak{T},\psi^{(2)}\circ\mathfrak{T})$ in $\mathcal{N}_{L,f^{(1)}}^+$, and subtract the resultant equations and boundary conditions from the nonlinear boundary value problem \eqref{S-Free-BP} for $(\varphi^{(1)},\psi^{(1)})$ in $\mathcal{N}_{L,f^{(1)}}^+$.
Then we get a nonlinear boundary value problem for $(\widetilde{\phi},\widetilde{\psi})$ in $\mathcal{N}_{L,f^{(1)}}^+$.
In the equation for $\widetilde Z$, we keep the terms involving derivatives of $\widetilde{\Xi}$ in divergence form and apply $C^{1,\alpha}$ estimates for divergence form elliptic equations with the transformed conormal boundary condition.
By adapting the proof of Lemma~\ref{Pro-fix-S} and using
\begin{equation*}
\|\widetilde{\Xi}\|_{\alpha,\mathcal{N}_{L,f^{(1)}}^+}\le C(\delta_1\sigma+\beta_0^2)\|\widetilde{f}\|_{1,\alpha,(0,L)},
\end{equation*}
we obtain \begin{equation*}
\|\widetilde{Z}\|_{1,\alpha,\mathcal{N}_{L,f^{(1)}}^+}\le C[(\delta_1+1)\sigma+\beta_0^2]\left(\|\widetilde{\phi}\|_{1,\alpha,\mathcal{N}_{L,f^{(1)}}^+}+\|\widetilde{Z}\|_{1,\alpha,\mathcal{N}_{L,f^{(1)}}^+}+\|\widetilde{f}\|_{1,\alpha,(0,L)}\right).
\end{equation*} Therefore we have
\begin{equation*}
\begin{aligned}
\|\widetilde{\phi}\|_{1,\alpha,\mathcal{N}_{L,f^{(1)}}^+}+\|\widetilde{Z}\|_{1,\alpha,\mathcal{N}_{L,f^{(1)}}^+}
\le& C_1^{\ast}[(\delta_1+1)\sigma+\beta_0^2]\left(\|\widetilde{\phi}\|_{1,\alpha,\mathcal{N}_{L,f^{(1)}}^+}+\|\widetilde{Z}\|_{1,\alpha,\mathcal{N}_{L,f^{(1)}}^+}\right)\\
&+C[(\delta_1+1)\sigma+\beta_0^2]\|\widetilde{f}\|_{1,\alpha,(0,L)}
\end{aligned}
\end{equation*}
for a constant $C_1^{\ast}>0$ depending only on the data but not on $L$.
If it holds that
$$\beta_0^2\le \frac{1}{4C_1^{\ast}},\quad\sigma\le \frac{1}{4C_1^{\ast}(\delta_1+1)},$$
then we obtain from the previous estimate that
\begin{equation}\label{RR-est}
\|\widetilde{\phi}\|_{1,\alpha,\mathcal{N}_{L,f^{(1)}}^+}+\|\widetilde{\psi}\|_{1,\alpha,\mathcal{N}_{L,f^{(1)}}^+}\le C[(\delta_1+1)\sigma+\beta_0^2]\|\widetilde{f}\|_{1,\alpha,(0,L)}.
\end{equation}
By using the free boundary condition \eqref{g-free-cut}, we can express $(\widetilde{f})'$ in terms of $(\widetilde{\phi},\widetilde{\psi},\mathfrak{T},D\mathfrak{T})$.
Then we apply \eqref{RR-est} to obtain the estimate
\begin{equation}\label{3D-g12_EST}
\|(\widetilde{f})'\|_{\alpha,(0,L)}\le C[(\delta_1+1)\sigma+\beta_0^2]\|\widetilde{f}\|_{1,\alpha,(0,L)}.
\end{equation}
To complete the estimate of $\|\widetilde{f}\|_{1,\alpha,(0,L)}=\|\widetilde{f}\|_{0,(0,L)}+\|(\widetilde{f})'\|_{\alpha,(0,L)}$, we now estimate $\|\widetilde{f}\|_{0,(0,L)}$.
Define $\rho^{(1)}$, $u_{x_1}^{(1)}$, $\rho^{(2)}$, and $u_{x_1}^{(2)}$ by
\begin{equation*}
\begin{aligned}
&\rho^{(k)}:=\varrho(S_{\ast},B_{\ast},\mathbf{q}(r,\psi^{(k)},D\psi^{(k)},D\varphi^{(k)},\Lambda_{\ast})),\\
& u_{x_1}^{(k)}:=\partial_{x_1}\varphi^{(k)}+\frac{1}{r}\partial_r(r\psi^{(k)})\quad\mbox{for}\quad k=1,2,
\end{aligned}
\end{equation*}
where $\varrho$ is given by \eqref{def-H-G}.
By using \eqref{fix-f-eq}, we get
\begin{equation}\label{f12-rhou}
\begin{aligned}
\int_{1/2}^1&r\left(\rho^{(1)}u_{x_1}^{(1)}-\rho^{(2)}u_{x_1}^{(2)}\right)(0,r)dr\\
&=\int_{f^{(1)}(x_1)}^{R(x_1)} r\rho^{(1)}u_{x_1}^{(1)}(x_1,r)dr-\int_{f^{(2)}(x_1)}^{R(x_1)} r\rho^{(2)}u_{x_1}^{(2)}(x_1,r)dr.
\end{aligned}
\end{equation}
Fix $x_0\in[0,L]$.
Without loss of generality, we may assume that
$$f^{(1)}(x_0)\le f^{(2)}(x_0).$$
Then \eqref{f12-rhou} can be rewritten as
\begin{equation*}
\begin{aligned}
\int_{1/2}^1&r\left(\rho^{(1)}u_{x_1}^{(1)}-\rho^{(2)}u_{x_1}^{(2)}\right)(0,r)dr\\
&=\int_{f^{(2)}(x_0)}^{R(x_0)} r\left(\rho^{(1)}u_{x_1}^{(1)}-\rho^{(2)}u_{x_1}^{(2)}\right)(x_0,r)dr+\int_{f^{(1)}(x_0)}^{f^{(2)}(x_0)} r\rho^{(1)}u_{x_1}^{(1)}(x_0,r)dr.
\end{aligned}
\end{equation*}
By applying \eqref{RR-est}, we have
\begin{equation*}
0\le f^{(2)}(x_0)-f^{(1)}(x_0)\le C[(\delta_1+1)\sigma+\beta_0^2]\|\widetilde{f}\|_{1,\alpha,(0,L)}.
\end{equation*}
Combining this with \eqref{3D-g12_EST}, we finally get
\begin{equation}\label{final-f}
\|\widetilde{f}\|_{1,\alpha,(0,L)}\le C_2^{\ast}[(\delta_1+1)\sigma+\beta_0^2]\|\widetilde{f}\|_{1,\alpha,(0,L)},
\end{equation}
where the constant $C_2^{\ast}>0$ depends only on the data but not on $L$.
We choose $\sigma_5$ and $\beta^\star$ as
\begin{equation}\label{Sigma5}
\sigma_5=\min\left\{\sigma_5^{\ast},\frac{1}{4C_1^{\ast}(\delta_1+1)},\frac{1}{4C_2^{\ast}(\delta_1+1)}\right\}\quad\text{and}\quad\beta^\star=\min\left\{\beta^\ast,\sqrt{\frac{1}{4C_1^\ast(\delta_1+1)}},\sqrt{\frac{1}{4C_2^{\ast}(\delta_1+1)}}\right\}
\end{equation}
for $\sigma_5^{\ast}$ defined in \eqref{Sigma5star},
so that \eqref{final-f} implies that $f^{(1)}=f^{(2)}$ for $\sigma\le\sigma_5$.
By Lemma~\ref{Pro-fix-S}, $(\varphi^{(1)},\psi^{(1)})=(\varphi^{(2)},\psi^{(2)}).$
The proof of Lemma~\ref{Lem-S-free} is completed.
\end{proof}

\subsection{Proof of Proposition~\ref{3D-Prop4.1}}
\label{subsection_4_3}
The proof of Proposition~\ref{3D-Prop4.1} is divided into four steps.

\textbf{1.}
Fix $\mathcal{W}_{\ast}=(S_*,B_*,\Lambda_*)\in\mathcal{P}_{\delta_1\sigma}$.
By Lemma~\ref{Lem-S-free}, if $|\beta_0|\le\beta^\star, \sigma\le \sigma_5$, then Problem~\ref{Prob4-Fix-S} has a unique solution $(f,\varphi,\psi)\in C^{2,\alpha}([0,L])\times [C^{2,\alpha}(\overline{\mathcal{N}_{L,f}^+})]^2$ satisfying \eqref{3D-pps-est}.
For convenience, we denote $$\begin{aligned}\mathcal{W}_\mathrm{en}:=(S_\mathrm{en},B_0^+,r\beta_\mathrm{en}), \qquad \mathcal{W}_0^+:=(S_0^+,B_0^+,\beta_0 r), \\\Omega_{L,f}^+:=\{(x_1,r)\in\mathbb{R}^2:0<x_1<L,\,f(x_1)<r<R(x_1)\}.\end{aligned}$$

\begin{lemma} \label{Pro-trans}
Under the same assumptions on $(S_\mathrm{en},\beta_\mathrm{en},u_r^\mathrm{en})$ as in Proposition~\ref{3D-Prop4.1},
there exist a positive constant $\beta^\sharp\in(0,\beta^\star]$ and a small constant $\sigma_4^{\ast\ast}\in(0,\sigma_5]$ depending only on the data such that if
\begin{itemize}[label=--]
\item $|\beta_0|\le\beta^\sharp$;
\item $\sigma\le\sigma_4^{\ast\ast}$,
\end{itemize}
then the initial value problem \eqref{Ite-3D2} has a unique solution $\mathcal{W}=(S, B, \Lambda)$ satisfying
\begin{equation*}
\|\mathcal{W}-\mathcal{W}_0^+\|_{1,\alpha,\mathcal{N}_{L,f}^+}\le C^{\ast}\|\mathcal{W}_\mathrm{en}-\mathcal{W}_0^+\|_{1,\alpha,\Gamma_\mathrm{en}^+}+C^{\ast}(|\beta_0|+\beta_0^2)(\delta_1+1)\sigma
\end{equation*}
for a constant $C^{\ast}>0$ depending only on the data but not on $L$.
\end{lemma}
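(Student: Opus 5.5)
The plan is to solve the transport problem \eqref{Ite-3D2} with a stream function adapted to the divergence-free field $r\rho^\ast\mathbf{u}^\ast$ from \eqref{conti-u}, and then estimate the solution by comparing with the background along streamlines.

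\textbf{Step 1: stream function and initial values.} From \eqref{conti-u}, define a stream function $w$ on $\Omega_{L,f}^+$ by
\[
w(x_1,r):=\int_{f(x_1)}^r t\rho^\ast\mathbf{u}^\ast\cdot\mathbf{e}_{x_1}(x_1,t)\,dt .
\]
By \eqref{fix-f-eq} and the free boundary condition \eqref{g-free-cut}, $w$ vanishes on $\Gamma_\mathrm{cd}^{L,f}$ and is constant on $\Gamma_\mathrm{w}^L$. It satisfies $\partial_r w=r\rho^\ast u^\ast_{x_1}$ and $\partial_{x_1}w=-r\rho^\ast u^\ast_r$, so $\rho^\ast\mathbf{q}_\ast\cdot\nabla w=0$. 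By \eqref{rhou-est-2}, $\partial_r w\ge c>0$, so for each $x_1$ the map $r\mapsto w(x_1,r)$ is a $C^{1,\alpha}$ diffeomorphism onto $[0,w_{\max}]$. Let $\mathcal{R}(w)$ be its inverse at $x_1=0$. Every solution of \eqref{Ite-3D2} must then be
\[
\mathcal{W}(x_1,r)=\mathcal{W}_\mathrm{en}\bigl(\mathcal{R}(w(x_1,r))\bigr),
\]
which gives existence and uniqueness. Since $\mathcal{W}_\mathrm{en}\in C^{2,\alpha}$ and $w\in C^{2,\alpha}$ (by \eqref{3D-pps-est}), $\mathcal{W}\in C^{1,\alpha}$.

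\textbf{Step 2: splitting the error.} Write
\[
\mathcal{W}-\mathcal{W}_0^+=\bigl[(\mathcal{W}_\mathrm{en}-\mathcal{W}_0^+)\circ\mathcal{R}\circ w\bigr]+\bigl[\mathcal{W}_0^+\circ\mathcal{R}\circ w-\mathcal{W}_0^+\bigr].
\]
The first term is bounded by $C\|\mathcal{W}_\mathrm{en}-\mathcal{W}_0^+\|_{1,\alpha}$, because $\mathcal{R}\circ w$ has $C^{1,\alpha}$ norm bounded uniformly in $L$. Obtaining that uniform bound needs only \eqref{rhou-est-1}, the nondegeneracy in \eqref{rhou-est-2}, and the fact that Hölder norms here are local.

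\textbf{Step 3: the background error (main obstacle).} The second term is where the background gradients enter. We have $\partial_r S_0^+=\rho_0^+\beta_0^2/(r(\rho_0^+)^\gamma)$ and $\partial_r B_0^+=\gamma\rho_0^+\beta_0^2/((\gamma-1)\rho_0^+ r)$, both $O(\beta_0^2)$, while $\partial_r(\beta_0 r)=\beta_0$. By the mean value theorem, the term is controlled by
\[
C\bigl(|\beta_0|+\beta_0^2\bigr)\,\|\mathcal{R}\circ w-r\|_{1,\alpha}.
\]
To bound $\|\mathcal{R}\circ w-r\|_{1,\alpha}$, compare $w$ with the background stream function $w_0=\rho_0^+u_0(r^2-\tfrac14)/2$, whose inverse is explicit. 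This gives
\[
\|\mathcal{R}\circ w-r\|_{1,\alpha}\le C\bigl(\|\rho^\ast\mathbf{u}^\ast-\rho_0^+u_0\mathbf{e}_{x_1}\|_{1,\alpha}+\|f-\tfrac12\|_{1,\alpha}\bigr)\le C(\delta_1+1)\sigma,
\]
using \eqref{rhou-est-1} and \eqref{3D-pps-est}.

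The delicate point is getting the $C^{1,\alpha}$ (not only $C^0$) displacement bound uniformly in $L$. This requires the inverse-function estimate for $\mathcal{R}$ together with composition estimates in Hölder spaces. These are uniform because $\partial_r w$ stays bounded away from zero and all norms are local, so no constant grows with $L$. The smallness conditions defining $\beta^\sharp$ and $\sigma_4^{\ast\ast}$ only need to guarantee \eqref{rhou-est-2} and that the stream function is well defined; hence $\beta^\sharp=\beta^\star$ and $\sigma_4^{\ast\ast}\le\sigma_5$ suffice.
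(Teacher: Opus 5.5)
Your proposal is correct and follows the paper's route. You represent the solution as $\mathcal{W}_\mathrm{en}\circ\mathcal{G}^{-1}\circ w$ using the stream function of $r\rho^\ast\mathbf{u}^\ast$, split $\mathcal{W}-\mathcal{W}_0^+$ into $(\mathcal{W}_\mathrm{en}-\mathcal{W}_0^+)\circ\mathcal{R}_0$ plus $\mathcal{W}_0^+\circ\mathcal{R}_0-\mathcal{W}_0^+$, and bound the latter by $\|D\mathcal{W}_0^+\|_{1,\alpha}\le C(|\beta_0|+\beta_0^2)$ times $\|\mathcal{R}_0-r\|_{1,\alpha}\le C(\delta_1+1)\sigma$. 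The only difference is that you anchor $w$ at $r=f(x_1)$ rather than at the wall $r=R(x_1)$; this is harmless because $f$ satisfies \eqref{g-free-cut}, so the boundary term in $\partial_{x_1}w$ cancels and $\partial_{x_1}w=-r\rho^\ast\mathbf{u}^\ast\cdot\mathbf{e}_r$ still holds.
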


\begin{proof}[Proof of Lemma~\ref{Pro-trans}.]
Define a function $w:\overline{\Omega_{L,f}^+}\rightarrow\mathbb{R}$ by
\begin{equation}\label{def-w}
w(x_1,r):=\int_{R(x_1)}^r s\mathbf{M}\cdot\mathbf{e}_{x_1}(x_1,s)ds\quad\mbox{for}\quad (x_1,r)\in\overline{\Omega_{L,f}^+}
\end{equation}
for
$$\mathbf{M}=\varrho(S_{\ast},B_\ast,\nabla\varphi+\mathbf{t}(r,\psi,D\psi,\Lambda_{\ast}))\left(\nabla\varphi+\frac{1}{r}\partial_r(r\psi)\mathbf{e}_{x_1}-(\partial_{x_1}\psi)\mathbf{e}_r\right),$$
where $\mathbf{t}$ and $\varrho$ are given by \eqref{def-T} and \eqref{def-H-G}, respectively.
For such $w$, we consider an invertible function $\mathcal{G}:[1/2,1]\rightarrow [w(0,1/2),w(0,1)]$ satisfying
\begin{equation}\label{def-G0}
\mathcal{G}(r)=w(0,r),
\end{equation}
and define a function $\mathcal{R}_0:\overline{\Omega_{L,f}^+}\rightarrow[1/2,1]$ by
\begin{equation}\label{3D-R0}
\mathcal{R}_0(x_1,r):=\mathcal{G}^{-1}\circ w(x_1,r).
\end{equation}
Note that \begin{equation*}
\|\mathcal{R}_0\|_{1,\alpha,\mathcal{N}_{L,f}^+}\le C\|\mathbf{M}\|_{\alpha,\mathcal{N}_{L,f}^+}.
\end{equation*}
By adjusting the proof of \cite[Proposition 3.5]{bae20183}, we can obtain a unique solution $\mathcal{W}$ of \eqref{Ite-3D2} represented in
\begin{equation}\label{def-W}
\mathcal{W}(x_1,r)=\mathcal{W}_\mathrm{en}(\mathcal{R}_0(x_1,r)),
\end{equation}
and the estimate
\begin{equation*}\begin{aligned}
\|\mathcal{W}-\mathcal{W}_0^+\|_{1,\alpha,\mathcal{N}_{L,f}^+}&\le\|\mathcal{W}_\mathrm{en}\circ \mathcal{R}_0-\mathcal{W}_0^+\circ\mathcal{R}_0\|_{1,\alpha,\mathcal{N}_{L,f}^+}+\|\mathcal{W}_0^+\circ\mathcal{R}_0-\mathcal{W}_0^+\|_{1,\alpha,\mathcal{N}_{L,f}^+}
\\&\le C_1^{\ast}\|\mathcal{W}_\mathrm{en}-\mathcal{W}_0^+\|_{1,\alpha,\Gamma_\mathrm{en}^+}+\|\mathcal{W}_0^+\circ\mathcal{R}_0-\mathcal{W}_0^+\|_{1,\alpha,\mathcal{N}_{L,f}^+},
\end{aligned}\end{equation*}
where the constant $C_1^{\ast}>0$ depends only on the data but not on $L$.
A direct computation shows that $$\|\mathcal{W}_0^+\circ\mathcal{R}_0-\mathcal{W}_0^+\|_{1,\alpha,\mathcal{N}_{L,f}^+}\le C_2^{\ast}\|D\mathcal{W}_0^+\|_{1,\alpha,\mathcal{N}_{L,f}^+}\|\mathcal{R}_0(x_1,r)-r\|_{1,\alpha,\mathcal{N}_{L,f}^+},$$ where the constant $C_2^{\ast}>0$ depends only on the data but not on $L$.
It can be verified that $$\|D\mathcal{W}_0^+\|_{1,\alpha,\mathcal{N}_{L,f}^+}\le C_3^\ast(|\beta_0|+\beta_0^2),\qquad\|\mathcal{R}_0(x_1,r)-r\|_{1,\alpha,\mathcal{N}_{L,f}^+}\le C_4^{\ast}(\delta_1+1)\sigma,$$ where the constants $C_3^{\ast},C_4^\ast>0$ depend only on the data but not on $L$.
Hence $$\|(S,B,\Lambda)-(S_0^+,B_0^+,\beta_0r)\|_{1,\alpha,\mathcal{N}_{L,f}^+}\le C^*\|(S_\mathrm{en},r\beta_\mathrm{en})-(S_0^+,\beta_0r)\|_{1,\alpha,\Gamma_\mathrm{en}^+}+C^{\ast}(|\beta_0|+\beta_0^2)(\delta_1+1)\sigma,$$ where the constant $C^\ast>0$ depends only on the data but not on $L$.
The proof of Lemma~\ref{Pro-trans} is completed.
\end{proof}

\begin{remark}
The second term in the estimate of Lemma~\ref{Pro-trans},
$$ C^*(|\beta_0|+\beta_0^2)(\delta_1+1)\sigma, $$
is a direct consequence of the nonzero swirl in the background state. Indeed,
$$ \|D\mathcal W_0^+\|_{1,\alpha,\mathcal{N}_{L,f}^+}\le C(|\beta_0|+\beta_0^2), $$
where
$$\mathcal W_0^+=(S_0^+,B_0^+,\beta_0r).$$

Thus, the perturbation of the streamline label $\mathcal R_0-r$ interacts with the nonvanishing radial derivatives of the background transported quantities and produces the additional term above. When $\beta_0=0$, the background transported quantities are constant and this term disappears.

It is important that $\beta_0$ is a background parameter independent of the perturbation size $\sigma$. Therefore, the factor $|\beta_0|+\beta_0^2$ cannot be made small merely by reducing $\sigma$. This is the reason that a fixed upper bound on $|\beta_0|$ is required in the subsequent iteration argument. Notice, however, that no condition of the form $|\beta_0|\le C\sigma$ is imposed.
\end{remark}

\textbf{2.}
To define an iteration map on the fixed set $\mathcal{P}_{\delta_1\sigma}$, we must extend the updated triple $\mathcal{W}=(S,B,\Lambda)$, which is defined on the variable domain $\mathcal{N}_{L,f}^+$, to the fixed domain $\mathcal{N}_{L,1/4}^+$.

For the unique solution $\mathcal{W}$ of the initial-value problem \eqref{Ite-3D2}, define an extension of $\mathcal{W}$ into $\mathcal{N}_{L,1/4}^+$ as follows:
\begin{equation}\label{ext-W-def}
\mathcal{E}_f(\mathcal{W})(x_1,r):=\left\{\begin{aligned}
\mathcal{W}(x_1,r)\quad\mbox{for }& r\ge f(x_1),\\
\mathcal{W}_0^+(r)+\sum_{i=1}^3c_i\left[\mathcal{W}(x_1,r_i)-\mathcal{W}_0^+(r_i)\right]\quad\mbox{for }& \frac{1}{4}< r<f(x_1),
\end{aligned}\right.
\end{equation}
for $r_i$ defined by
$$r_i(x_1,r)=f(x_1)+\frac{f(x_1)-r}{i}\quad\text{for }i=1, 2, 3.$$ Here, $c_1=6$, $c_2=-32$, $c_3=27$, which are constants determined by the system of equations $$\sum_{i=1}^3c_i\left(-\frac{1}{i}\right)^m=1, \quad m=0, 1, 2.$$
The coefficients in the definition of $\mathcal{E}_f(\mathcal{W})$ are chosen so that the extension preserves $C^{1,\alpha}$-regularity across the reference interface.
Since $R(x_1)-f(x_1)\ge \frac{7}{16}$ on $[0, L]$, $\mathcal{E}_f$ is well defined by \eqref{ext-W-def}, and it satisfies
\begin{equation}\label{W-f-est}
\|\mathcal{E}_f(\mathcal{W})-\mathcal{W}_0^+\|_{1,\alpha,\mathcal{N}_{L,1/4}^+}\le C\|\mathcal{W}-\mathcal{W}_0^+\|_{1,\alpha,\mathcal{N}_{L,f}^+}.
\end{equation}
We define an iteration mapping $\mathcal{J}:\mathcal{P}_{\delta_1\sigma}\rightarrow \left[C^{1,\alpha/2}(\overline{\mathcal{N}_{L,1/4}^+})\right]^3$ by
\begin{equation*}
\mathcal{J}(\mathcal{W}_{\ast})=\mathcal{E}_f(\mathcal{W}).
\end{equation*}
By \eqref{W-f-est} and Lemma~\ref{Pro-trans}, we have the estimate
\begin{equation*}
\|\mathcal{E}_f(\mathcal{W})-\mathcal{W}_0^+\|_{1,\alpha,\mathcal{N}_{L,1/4}^+}\le C\|\mathcal{W}-\mathcal{W}_0^+\|_{1,\alpha,\mathcal{N}_{L,f}^+}\le C_1^\star\sigma+C_2^{\star}(|\beta_0|+\beta_0^2)(\delta_1+1)\sigma
\end{equation*}
for a constant $C_1^{\star}>0$ depending only on the data but not on $L$.

\textbf{3.}
In this step, we finally choose $(\delta_1,\sigma_4)$ so that $\mathcal{J}$ has a unique fixed point in $\mathcal{P}_{\delta_1\sigma}$.

A direct computation shows that there exists a constant $\epsilon_4>0$ depending only on the data such that if
$$(\delta_1+1)\sigma\le \epsilon_4,$$
 then
\begin{equation*}
\|\varrho(S_{\ast},B_{\ast},\mathbf{q}(r,\psi,D\psi,D\varphi,\Lambda_{\ast}))\mathbf{q}(r,\psi,D\psi,D\varphi,\Lambda_{\ast})\cdot\mathbf{e}_{x_1}-\rho_0^+u_0\|_{0,\mathcal{N}_{L,f}^+}\le C_3^{\star}(\delta_1+1)\sigma
\end{equation*}
for a constant $C_3^{\star}>0$ depending only on the data but not on $L$.
If it holds that
$$\sigma\le\frac{\rho_0^+u_0}{2C_3^{\star}(\delta_1+1)} ,$$
then we obtain from the previous estimate that
\begin{equation}\label{rhou-positive}
\|\varrho(S_{\ast},B_{\ast},\mathbf{q}(r,\psi,D\psi,D\varphi,\Lambda_{\ast}))\mathbf{q}(r,\psi,D\psi,D\varphi,\Lambda_{\ast})\cdot\mathbf{e}_{x_1}-\rho_0^+u_0\|_{0,\mathcal{N}_{L,f}^+}\le \frac{\rho_0^+u_0}{2}.
\end{equation}
Also, by the boundary conditions in \eqref{S-Free-BP} for $(\varphi,\psi)$ and the definition of $\varphi_\mathrm{en}$ given in \eqref{def-varphi-en}, we have
\begin{equation}\label{partial-x-S}
\partial_r\varphi-\partial_{x_1}\psi\equiv0\quad\mbox{on}\quad \Gamma_\mathrm{en}^\epsilon\cup\Gamma_\mathrm{ex}^{L,f}.
\end{equation}
It follows from \eqref{Ite-3D2} and \eqref{rhou-positive}--\eqref{partial-x-S} that
$$\left(\partial_{x_1}\mathcal{E}_f(S),\partial_{x_1}\mathcal{E}_f(B),\partial_{x_1}\mathcal{E}_f(\Lambda)\right)\equiv \mathbf{0}\quad \mbox{on}\quad \Gamma_\mathrm{en}^\epsilon\cup \Gamma_\mathrm{ex}^{L,1/4}.$$

Choose $\delta_1, \beta_2^\ast$ and $\sigma_4^{\ast}$ as
\begin{equation}\label{Sigma4star}
\begin{aligned}
\delta_1&=2C_1^{\star}, \\\beta_2^\ast&=\min\left\{\beta^\sharp,\frac{C_1^\star}{2C_2^\star(2C_1^\star+1)},\sqrt{\frac{C_1^\star}{2C_2^\star(2C_1^\star+1)}}\right\}\\\sigma_4^{\ast}&=\min\left\{\sigma_5,\sigma_4^{\ast\ast},\frac{\epsilon_4}{\delta_1+1}, \frac{\rho_0^+u_0}{2C_3^{\star}(\delta_1+1)}\right\}
\end{aligned}
\end{equation}
with $\sigma_5$ defined in \eqref{Sigma5} and $\sigma_4^{\ast\ast}$ given in Lemma~\ref{Pro-trans}.
Under such choices of $(\delta_1,\beta_2^\ast,\sigma_4^{\ast})$, the mapping $\mathcal{J}$ maps $\mathcal{P}_{\delta_1\sigma}$ into itself whenever $|\beta_0|\le\beta_2^\ast, \sigma\le\sigma_4^{\ast}$. We emphasize that the restriction on the background swirl is independent of the perturbation size parameter $\sigma$.

The iteration set $\mathcal{P}_{\delta_1\sigma}$ given by \eqref{Ent-Ang-set} is a convex and compact subset of $[C^{1,\alpha/2}(\overline{\mathcal{N}_{L,1/4}^+})]^3$.
Suppose that a sequence $\{\mathcal{W}_{\ast}^{(k)}\}_{k=1}^{\infty}:=\{(S_{\ast}^{(k)},B_{\ast}^{(k)},\Lambda_{\ast}^{(k)})\}_{k=1}^{\infty}\subset\mathcal{P}_{\delta_1\sigma}$ converges in $C^{1,\alpha/2}(\overline{\mathcal{N}_{L,1/4}^+})$ to $\mathcal{W}_{\ast}^{(\infty)}:=(S_{\ast}^{(\infty)},B_{\ast}^{(\infty)},\Lambda_{\ast}^{(\infty)})\in\mathcal{P}_{\delta_1\sigma}$.
For each $k\in\mathbb{N}\cup\{\infty\}$, set
\begin{equation*}
\mathcal{W}^{(k)}:=\mathcal{J}(\mathcal{W}_{\ast}^{(k)}).
\end{equation*}
Let $(f^{(k)},\varphi^{(k)},\psi^{(k)})\in C^{2,\alpha}([0,L])\times [C^{2,\alpha}(\overline{\mathcal{N}_{L,f^{(k)}}^+})]^2$ be the unique solution of Problem~\ref{Prob4-Fix-S} associated with $\mathcal{W}_{\ast}=\mathcal{W}_{\ast}^{(k)}$.
Due to the uniqueness of the solution to the Problem~\ref{Prob4-Fix-S}, $\{f^{(k)}\}_{k=1}^{\infty}$ converges in $C^{2,\alpha/2}([0,L])$.
Denote its limit by $f^{(\infty)}$ and the unique solution of \eqref{Fixed-BVP} associated with $(f_{\ast},\mathcal{W}_{\ast})=(f^{(\infty)},\mathcal{W}_{\ast}^{(\infty)})$ by $(\varphi^{(\infty)},\psi^{(\infty)})$.
Define a transformation $T^{(k)}:\overline{\mathcal{N}_{L,f^{(\infty)}}^+}\rightarrow\overline{\mathcal{N}_{L,f^{(k)}}^+}$ by
\begin{equation*}
T^{(k)}(x_1,r,\theta)=\left(x_1,\frac{R(x_1)-f^{(k)}(x_1)}{R(x_1)-f^{(\infty)}(x_1)}\left(r-R(x_1)\right)+R(x_1),\theta\right),
\end{equation*}
and set
\begin{equation*}
\begin{aligned}
\mathbf{M}^{(k)}:=\varrho\left(S_{\ast}^{(k)},B_{\ast}^{(k)},\nabla\varphi^{(k)}+\mathbf{t}(r,\psi^{(k)},D\psi^{(k)},\Lambda_{\ast}^{(k)})\right)\left(\nabla\varphi^{(k)}+\frac{1}{r}\partial_r(r\psi^{(k)})\mathbf{e}_{x_1}-(\partial_{x_1}\psi^{(k)})\mathbf{e}_r\right),
\end{aligned}
\end{equation*}
where $\mathbf{t}$ and $\varrho$ are given by \eqref{def-T} and \eqref{def-H-G}, respectively.
Then $\mathbf{M}^{(k)}\circ T^{(k)}$ converges to $\mathbf{M}^{(\infty)}$ in $C^{1,\alpha/2}(\overline{\mathcal{N}_{L,f^{(\infty)}}^+}).$
By Lemma~\ref{Pro-trans}, $\mathcal{W}^{(k)}$ converges to $\mathcal{W}^{(\infty)}$ in $C^{1,\alpha/2}(\overline{\mathcal{N}_{L,1/4}^+})$.
This implies that $\mathcal{J}(\mathcal{W}_{\ast}^{(k)})$ converges to $\mathcal{J}(\mathcal{W}_{\ast}^{(\infty)})$ in $C^{1,\alpha/2}(\overline{\mathcal{N}_{L,1/4}^+})$.
Thus, $\mathcal{J}$ is a continuous map in $[C^{1,\alpha/2}(\overline{\mathcal{N}_{L,1/4}^+})]^3$.
Applying the Schauder fixed point theorem yields that $\mathcal{J}$ has a fixed point $\mathcal{W}=\mathcal{E}_f(S, B, \Lambda)\in\mathcal{P}_{\delta_1\sigma}$.
For such $\mathcal{W}$, let $(f,\varphi,\psi)$ be the unique solution of Problem~\ref{Prob4-Fix-S}, and let us set $(S, B, \Lambda):=\left.\mathcal{E}_f(S, B, \Lambda)\right|_{\mathcal{N}_{L,f}^+}$.
Then $(f,S, B, \Lambda,\varphi,\psi)$ solves Problem~\ref{Prob3-Cut} provided that $\sigma\le\sigma_4^{\ast}$.

Finally, we prove the uniqueness of a fixed point of $\mathcal{J}$.
Let $(f^{(1)},\mathcal{W}^{(1)},\varphi^{(1)},\psi^{(1)})$ and $(f^{(2)},\mathcal{W}^{(2)},\varphi^{(2)},\psi^{(2)})$ be two solutions to Problem~\ref{Prob3-Cut}, 

and suppose that each solution satisfies the estimates given in \eqref{3D-Prop-est} of Proposition~\ref{3D-Prop4.1}.
For a transformation $\mathfrak{T}:\overline{\mathcal{N}_{L,f^{(1)}}^+}\rightarrow\overline{\mathcal{N}_{L,f^{(2)}}^+}$ defined in \eqref{TT}, set
\begin{equation*}
\left\{\begin{aligned}
&\widetilde{\phi}:=\varphi^{(1)}-\left(\varphi^{(2)}\circ\mathfrak{T}\right),\quad \widetilde{\psi}:=\psi^{(1)}-\left(\psi^{(2)}\circ\mathfrak{T}\right),\\
&\widetilde{\mathcal{W}}:=\mathcal{W}^{(1)}-\left(\mathcal{W}^{(2)}\circ\mathfrak{T}\right),\quad \widetilde{f}:=f^{(1)}-f^{(2)}.
\end{aligned}\right.
\end{equation*}
A direct computation shows that there exist constants $\beta_2'>0$ and $\sigma_4'>0$ depending only on the data but not on $L$ so that if $|\beta_0|\le \beta_2'$ and $\sigma\le \sigma_4'$, then
\begin{equation}\label{W-est}
\begin{aligned}
\|\widetilde{\mathcal{W}}\|_{\alpha,\mathcal{N}_{L,f^{(1)}}^+}&+\|\partial_r\widetilde{\mathcal{W}}\|_{\alpha,\mathcal{N}_{L,f^{(1)}}^+}\\
&\le C(|\beta_0|+\beta_0^2+\sigma)\left(\|\widetilde{\phi}\|_{1,\alpha,\mathcal{N}_{L,f^{(1)}}^+}+\|\widetilde{\psi}\|_{1,\alpha,\mathcal{N}_{L,f^{(1)}}^+}+\|\widetilde{f}\|_{1,\alpha,(0,L)}\right)\\
&\le C(|\beta_0|+\beta_0^2+\sigma)\|\widetilde{f}\|_{1,\alpha,(0,L)}.
\end{aligned}
\end{equation}
Adapting the proof of Lemma~\ref{Lem-S-free} and using the estimate \eqref{W-est}, we have
\begin{equation}\label{3D-g}
\|\widetilde{f}\|_{1,\alpha,(0,L)}\le C_5^{\star}(|\beta_0|+\beta_0^2+\sigma)\|\widetilde{f}\|_{1,\alpha,(0,L)}
\end{equation}
for a constant $C_5^{\star}>0$ depending only on the data but not on $L$.
We choose $\beta_2$ and $\sigma_4$ as
\begin{equation*}\label{Sigma4}
\beta_2=\min\left\{\beta_2^\ast,\beta_2',\frac{1}{4C_5^\star},\sqrt{\frac{1}{4C_5^\star}}\right\},\quad\sigma_4=\min\left\{\sigma_4^{\ast},\sigma_4',\frac{1}{4C_5^{\star}}\right\}
\end{equation*}
with $\sigma_4^{\ast}$ defined in \eqref{Sigma4star}, so that we obtain from \eqref{3D-g} that $f^{(1)}=f^{(2)}$ for $\sigma\le\sigma_4$.
Then, by \eqref{W-est}, we have $\mathcal{W}^{(1)}=\mathcal{W}^{(2)}.$
Therefore
$$(f^{(1)},\mathcal{W}^{(1)},\varphi^{(1)},\psi^{(1)})=(f^{(2)},\mathcal{W}^{(2)},\varphi^{(2)},\psi^{(2)})$$
by Lemma~\ref{Lem-S-free}.
The proof of Proposition~\ref{3D-Prop4.1} is completed.
\qed

\section{Free boundary problem in the infinitely long nozzle $\mathcal{N}$}\label{3D-sec-ex}
\subsection{Proof of Theorem~\ref{3D-Thm-HD}}\label{5-1}
Let $\beta_2$ and $\sigma_4$ be the constants obtained in Proposition~\ref{3D-Prop4.1}. We set $$\beta_1:=\beta_2,\qquad\sigma_3:=\sigma_4.$$ Suppose that $|\beta_0|\le\beta_2$ and $\sigma\le \sigma_4$.
Then Proposition~\ref{3D-Prop4.1} yields a solution of Problem~\ref{Prob3-Cut} for every $L>L_0+10$.
For each $m\in\mathbb{N}$ with $m>L_0$, let $(f^{(m)},S^{(m)},B^{(m)},\Lambda^{(m)},\varphi^{(m)},\psi^{(m)})$ be a solution of Problem~\ref{Prob3-Cut} in $\mathcal{N}_{m+20}:=\mathcal{N}\cap\{0<x_1<m+20\}$, and suppose that the solution satisfies the estimates \eqref{3D-Prop-est} given in Proposition~\ref{3D-Prop4.1}.
Then, using the Arzel\`a-Ascoli theorem and a diagonal procedure, we can extract a subsequence, still written as $\{(f^{(m)},S^{(m)},B^{(m)},\Lambda^{(m)},\varphi^{(m)},\psi^{(m)})\}_{m\in\mathbb{N}}$ so that the subsequence converges to functions $(f^{\ast},S^{\ast},B^{\ast},\Lambda^{\ast},\varphi^{\ast},\psi^{\ast})$ in the following sense: for any $L>L_0$,
\begin{itemize}
\item[(i)] $f^{(m)}$ converges to $f^{\ast}$ in $C^2$ in $[0,L]$.
\item[(ii)] $(S^{(m)}\circ{T}^{(m)},B^{(m)}\circ T^{(m)},\Lambda^{(m)}\circ{T}^{(m)})$ converges to $(S^{\ast},B^{\ast},\Lambda^{\ast})$ in $C^1$ in $\overline{\mathcal{N}_{L,f^{\ast}}^+}$, where ${T}^{(m)}:\overline{\mathcal{N}_{m+20,f^{\ast}}^+}\rightarrow\overline{\mathcal{N}_{m+20,f^{(m)}}^+}$ is defined by
\begin{equation*}
T^{(m)}(x_1,r,\theta)=\left(x_1,\frac{R(x_1)-f^{(m)}(x_1)}{R(x_1)-f^\ast(x_1)}\left(r-R(x_1)\right)+R(x_1),\theta\right).
\end{equation*}
\item[(iii)] $\left(\varphi^{(m)}\circ{T}^{(m)},\psi^{(m)}\circ{T}^{(m)}\right)$ converges to $\left(\varphi^{\ast},\psi^{\ast}\right)$ in $C^2$ in $\overline{\mathcal{N}_{L,f^{\ast}}^+}$.
\end{itemize}
After changing variables and passing to the limit $m\rightarrow\infty$, one can show that
$(f^{\ast},S^{\ast},B^{\ast},\Lambda^{\ast},\varphi^{\ast},\psi^{\ast})$ is a solution to the free boundary problem \eqref{3D-H} with boundary conditions \eqref{g-free-cond} and \eqref{3D-BC-C}.
Furthermore, it follows from the $C^2$-convergence of $\{(f^{(m)},\varphi^{(m)},\psi^{(m)})\}_{m\in\mathbb{N}}$, $C^1$-convergence of $\{(S^{(m)},B^{(m)},\Lambda^{(m)})\}_{m\in\mathbb{N}}$, and the estimates \eqref{3D-Prop-est} given in Proposition~\ref{3D-Prop4.1} that $(f^{\ast},S^{\ast},B^{\ast},\Lambda^{\ast},\varphi^{\ast},\psi^{\ast})$ satisfy the estimates \eqref{Thm-HD-est} for a constant $C>0$ depending only on the data. Thus Theorem 3.1 holds with $\beta_1=\beta_2$ and $\sigma_3=\sigma_4$.
\qed

\subsection{Proof of Theorem~\ref{3D-MainThm}(a)}\label{5-2}
Let $\beta_1$ and $\sigma_3$ be from Theorem~\ref{3D-Thm-HD}, and suppose that $|\beta_0|\le\beta_1$ and $\sigma\le \sigma_3$. By Theorem~\ref{3D-Thm-HD}, the free boundary problem \eqref{3D-H} with \eqref{g-free-cond} and \eqref{3D-BC-C} has a solution $(g_D,S, B, \Lambda,\varphi,\psi)$ that satisfies the estimates \eqref{Thm-HD-est}.
For such a solution, we define $(\rho, \mathbf{u}, p)$ by
\begin{equation*}
\left.\begin{aligned}
&\mathbf{u}:=\left(\partial_{x_1}\varphi+\frac{1}{r}\partial_r(r\psi)\right)\mathbf{e}_{x_1}+(\partial_r\varphi-\partial_{x_1}\psi)\mathbf{e}_r+\frac{\Lambda}{r}\mathbf{e}_{\theta},\\
&\rho:=\varrho\left(S,B, \mathbf{u}\right),\quad p:=S\rho^{\gamma}\quad\mbox{in}\quad \overline{\mathcal{N}_{g_D}^+},
\end{aligned}\right.
\end{equation*}
where $\varrho$ is given by \eqref{def-H-G}.
It follows from the estimates \eqref{Thm-HD-est} given in Theorem~\ref{3D-Thm-HD} that $(g_D,\rho, \mathbf{u},p)$ satisfies the estimate \eqref{Thm2.1-uniq-est}.
Then, one can choose a positive constant $\bar\beta\in (0,\beta_1]$ and a small constant $\sigma_1\in(0,\sigma_3]$ depending only on the data such that if $|\beta_0|\le\bar\beta$ and $\sigma\le\sigma_1$, then
$(g_D, \rho, \mathbf{u}, p)$ satisfy $\rho\ge\frac{1}{2}\rho_0^+>0$ and
$$\inf_{\mathcal{N}_{g_D}^+}\left(1-\frac{u_{x_1}^2+u_r^2}{c^2}-\frac{(u_{x_1}^2+u_r^2)u_\theta^2}{4c^4}\right)>0$$
in $\overline{\mathcal{N}_{g_D}^+}$, and hence solves Problem~\ref{3D-Problem2}.
Here, $c_0^+$ is given by $c_0^+=\sqrt{\gamma p_0^+/\rho_0^+}$.
Thus Theorem~\ref{3D-MainThm}(a) holds with $\bar\beta\in(0,\beta_1]$ and $\sigma_1\in(0,\sigma_3]$.
\qed

\subsection{Proof of Theorem~\ref{3D-MainThm}(b)}\label{sec-far}

Let $\bar\beta$ and $\sigma_1$ be the constants fixed in the proof of Theorem~\ref{3D-MainThm}(a). Thus, for $$|\beta_0|\le\bar\beta,\qquad\sigma\le\sigma_1,$$ there exists a solution $(g_D,\rho, \mathbf{u},p)$ with $\mathbf{u}=u_{x_1}\mathbf{e}_{x_1}+u_r\mathbf{e}_r+u_{\theta}\mathbf{e}_{\theta}$ of Problem~\ref{3D-Problem2} satisfying the estimate \eqref{Thm2.1-uniq-est}.

Set
\begin{equation*}
\begin{aligned}
\Omega_{g_D}^+&:=\left\{(x_1,r)\in\mathbb{R}^2: x_1> 0, g_D(x_1)<r<R(x_1)\right\},\\
\Gamma_\mathrm{en}^{g_D}&:=\partial\Omega_{g_D}^+\cap\{x_1=0\},\quad
\Gamma_\mathrm{cd}^{g_D}:=\partial\Omega_{g_D}^+\cap\{r=g_D(x_1)\}.
\end{aligned}
\end{equation*}
The equation
$\partial_{x_1}(\rho u_{x_1})+\partial_r(\rho u_r)+\frac{\rho u_r}{r}=0$ in $\Omega_{g_D}^+$, stated in \eqref{3D-ang}, can be rewritten as
\begin{equation*}\label{conti-eq}
\partial_{x_1}(r\rho u_{x_1})+\partial_r(r\rho u_r)=0\quad\mbox{in}\quad\Omega_{g_D}^+.
\end{equation*}
Using this equation, one can directly verify that the function $\mathfrak{h}$ given by
\begin{equation*}
\mathfrak{h}(x_1,r):=\int_{R(x_1)}^rt\rho u_{x_1}(x_1,t)dt\quad\mbox{for}\quad (x_1,r)\in\overline{\Omega_{g_D}^+}
\end{equation*}
satisfies
\begin{equation}\label{3D-st-f}
\partial_{x_1}\mathfrak{h}=-r\rho u_r,\quad\partial_r\mathfrak{h}=r\rho u_{x_1}.
\end{equation}
Set
$$\omega:=\partial_{x_1}\mathfrak{h}.$$

\textbf{Claim:} $$\int_L^{L+1}\int_{g_D(x_1)}^{R(x_1)}|\nabla\omega|^2drdx_1\rightarrow0\quad\mbox{as}\quad L\rightarrow\infty.$$

Assume that the claim is true. Since $\omega\in C^{1,\alpha}(\overline{\mathcal{N}^+_{g_D}})$, we have
\begin{equation}\label{3D-omega-lim}
\|\nabla\omega(x_1,\cdot)\|_{C^0(\overline{\Omega_{g_D}^+\cap\{x_1> L\}})}\rightarrow 0\quad\mbox{as}\quad L\rightarrow\infty.
\end{equation}
By \eqref{3D-omega-lim} and the compatibility condition $\omega\equiv 0$ on $\{(x_1,r):x_1>L_0,r=R(x_1)\}$, we have
\begin{equation}\label{omega0}
\|\omega\|_{C^0(\overline{\Omega_{g_D}^+\cap\{x_1> L\}})}\rightarrow 0\quad\mbox{as}\quad L\rightarrow\infty.
\end{equation}
Since $\rho>\rho_0^+/2$ in $\Omega_{g_D}^+$ and $\omega=\partial_{x_1}\mathfrak{h}=-r\rho u_r$, \eqref{omega0} implies that
\begin{equation}\label{ur0}
\|ru_r\|_{C^0(\overline{\Omega_{g_D}^+\cap\{x_1> L\}})}\rightarrow 0\quad\mbox{as}\quad L\rightarrow\infty.
\end{equation}
By \eqref{3D-omega-lim} and \eqref{ur0}, we have
\begin{equation*}
\| ru_r\|_{C^1(\overline{\Omega_{g_D}^+\cap\{x_1> L\}})}\rightarrow 0\quad\mbox{as}\quad L\rightarrow\infty,
\end{equation*}
from which
\begin{eqnarray}
&\nonumber&\|g_D'\|_{C^1({\{x_1\ge L\}})}\rightarrow 0,\\
&\label{nabla-ur}&\|u_r\|_{C^1(\overline{\mathcal{N}_{g_D}^+\cap\{x_1> L\}})}\rightarrow 0\quad\mbox{as}\quad L\rightarrow\infty.
\end{eqnarray}
It follows from the equation in \eqref{3D-ang} and \eqref{nabla-ur} that
\begin{equation*}
\left\|\partial_r p-\frac{\rho u_{\theta}^2}{r}\right\|_{C^0(\overline{\mathcal{N}_{g_D}^+\cap\{x_1> L\}})}\rightarrow 0\quad\mbox{as}\quad L\rightarrow\infty.
\end{equation*}
Thus, once the claim is established, the proof of Theorem~\ref{3D-MainThm}(b) is complete. It remains only to prove the claim.

\begin{proof}[Verification of Claim.]
By \eqref{def-w}--\eqref{def-W}, the entropy $S(=p/\rho^{\gamma})$, Bernoulli function $B(=\frac{1}{2}|\mathbf{u}|^2+\frac{\gamma}{\gamma-1}S\rho^{\gamma-1})$ and angular momentum density $\Lambda(=ru_{\theta})$ are represented as
\begin{equation}\label{S-Lambda}
\begin{aligned}
S(x_1,r)=S_\mathrm{en}\circ\mathcal{G}^{-1}(\mathfrak{h}(x_1,r))&=:{S}(\mathfrak{h}(x_1,r)),\\
B(x_1,r)=B_0^+\circ\mathcal{G}^{-1}(\mathfrak{h}(x_1,r))&=:{B}(\mathfrak{h}(x_1,r))\quad\mbox{for } (x_1,r)\in\overline{\Omega_{g_D}^+},\\
\Lambda(x_1,r)=\Lambda_\mathrm{en}\circ\mathcal{G}^{-1}(\mathfrak{h}(x_1,r))&=:{\Lambda}(\mathfrak{h}(x_1,r))
\end{aligned}
\end{equation}
where $\mathcal{G}$ is given by \eqref{def-G0} associated with $w=\mathfrak{h}$ and $\Lambda_\mathrm{en}(r):=r\beta_\mathrm{en}(r)$ for $r\in[1/2,1]$.
Since $S_\mathrm{en}$, $B_0^+$, $\Lambda_\mathrm{en}$, and $\mathcal{G}^{-1}$
are differentiable, $S$, $B$ and $\Lambda$ are differentiable functions of $\mathfrak{h}$.
Set
$$\mathfrak{S}(\mathfrak{h}):=\frac{\gamma}{\gamma-1}{S}(\mathfrak{h}).$$
Then, by the definition of the Bernoulli function \eqref{Ber-inv}, we have
\begin{equation}\label{3D-Ber}
B(\mathfrak{h})r^2\rho^2=\frac{1}{2}\left(|\nabla\mathfrak{h}|^2+\Lambda(\mathfrak{h})^2\rho^2\right)+r^2\mathfrak{S}(\mathfrak{h})\rho^{\gamma+1}\quad\mbox{in}\quad \overline{\Omega_{g_D}^+},
\end{equation}
for $$\nabla:=(\partial_{x_1},\partial_r).$$
By differentiating the equation \eqref{3D-Ber} with respect to $x_1$ and $r$, we have
\begin{equation}\label{3D-par-rho}
\begin{aligned}
&\partial_{x_1}\rho=-\frac{(\partial_{x_1}\mathfrak{h})(\partial_{x_1 x_1}\mathfrak{h}+\Lambda\Lambda'\rho^2+r^2\mathfrak{S}'\rho^{\gamma+1}-r^2B'\rho^2)+(\partial_r\mathfrak{h})(\partial_{r{x_1}}\mathfrak{h})}{r^2(\gamma+1)\mathfrak{S}\rho^{\gamma}-2r^2B\rho+\Lambda^2\rho},\\
&\partial_r\rho=-\frac{(\partial_{x_1}\mathfrak{h})(\partial_{x_1 r}\mathfrak{h}-\partial_{x_1}\mathfrak{h}/r)+(\partial_r\mathfrak{h})(\partial_{rr}\mathfrak{h}+\Lambda\Lambda'\rho^2+r^2\mathfrak{S}'\rho^{\gamma+1}-r^2B'\rho^2-\partial_r\mathfrak{h}/r)-\Lambda^2\rho^2/r}{r^2(\gamma+1)\mathfrak{S}\rho^{\gamma}-2r^2B\rho+\Lambda^2\rho},
\end{aligned}
\end{equation}
where $'$ denotes the derivative with respect to $\mathfrak{h}$.
Using \eqref{3D-st-f}--\eqref{3D-par-rho}, the equation
\begin{equation}\label{ES-Far-eq}
\rho(u_{x_1}\partial_{x_1}+u_r\partial_r)u_r-\frac{\rho u_{\theta}^2}{r}+\partial_rp=0\quad\mbox{in}\quad\Omega_{g_D}^+
\end{equation}
in \eqref{3D-ang} can be rewritten as
\begin{equation}\label{3D-ES-st}
-\left(\frac{\partial_r\mathfrak{h}}{r}\right)\nabla\cdot\left(\frac{\nabla\mathfrak{h}}{r\rho}\right)-\frac{(\partial_r\mathfrak{h})\mathfrak{S}'\rho^{\gamma}}{\gamma}-\frac{(\partial_r\mathfrak{h})\Lambda\Lambda'\rho}{r^2}+(\partial_r\mathfrak{h})\rho B'=0\quad\mbox{in}\quad\Omega_{g_D}^+.
\end{equation}
We multiply \eqref{3D-ES-st} by $r/(\partial_r\mathfrak{h})$ to get
\begin{equation}\label{3D-Stream}
\nabla\cdot\left(\frac{\nabla\mathfrak{h}}{r\rho}\right)=-\frac{r}{\gamma}\mathfrak{S}'\rho^{\gamma}-\frac{\Lambda\Lambda'\rho}{r}+r\rho B'\quad\mbox{in}\quad\Omega_{g_D}^+.
\end{equation}
Differentiate \eqref{3D-Stream} with respect to $x_1$ to get the following equation for $\omega$:
\begin{equation}\label{3D-St-Diff}
\partial_i\left(\frac{\mathfrak{q}_{ij}}{r\rho^2}\partial_j\omega\right)+\partial_i\left(\frac{\mathfrak{q}_1\partial_i\mathfrak{h}}{r\rho^2}\omega\right)
=\mathfrak{q}_2\omega+\mathfrak{q}_3(\partial_i\mathfrak{h})(\partial_i\omega)\quad\mbox{in}\quad \Omega_{g_D}^+,
\end{equation}
where
\begin{equation}\label{def-q12}
\begin{aligned}
&\mathfrak{O}:=r^2(\gamma+1)\mathfrak{S}\rho^{\gamma}-2r^2B\rho+\Lambda^2\rho,\\
&\mathfrak{q}_{ij}:=\rho\delta_{ij}+\frac{(\partial_i\mathfrak{h})(\partial_j\mathfrak{h})}{\mathfrak{O}},\\
&\mathfrak{q}_1:=\frac{\Lambda\Lambda'\rho^2+r^2\mathfrak{S}'\rho^{\gamma+1}-r^2 B'\rho^2}{\mathfrak{O}},\\
&\mathfrak{q}_2
:=-\frac{r}{\gamma}\mathfrak{S}''\rho^{\gamma}-\frac{(\Lambda')^2\rho}{r}-\frac{\Lambda\Lambda''\rho}{r}+r\rho B''+\frac{(\Lambda\Lambda'\rho^2+r^2\mathfrak{S}'\rho^{\gamma+1}-r^2 B' \rho^2)^2}{r\rho^2\mathfrak{O}},\\
&\mathfrak{q}_3:=\frac{1}{\mathfrak{O}}\left(r\mathfrak{S}'\rho^{\gamma-1}+\frac{\Lambda\Lambda'}{r}-rB'\right).
\end{aligned}
\end{equation}

Note that $\mathbf{u}$ is represented by \eqref{3D-u}, for $(\varphi, \psi, S,B,\Lambda)$ solving the equations \eqref{3D-H}. Similarly to \eqref{3D-psi-BC}, we rewrite the second equation in \eqref{3D-H} as
\begin{equation*}
-\left(\partial_{x_1 x_1}+\frac{1}{r}\partial_r(r\partial_r)-\frac{1}{r^2}\right)\psi=\frac{1}{\mathbf{u}\cdot\mathbf{e}_{x_1}}\left(-\partial_r B+\frac{\varrho^{\gamma-1}(S,B,\mathbf{u})}{\gamma-1}\partial_rS+\frac{\Lambda}{r^2}\partial_r\Lambda\right)
\quad\mbox{in}\quad\Omega_{g_D}^+.
\end{equation*}
By Theorem~\ref{3D-MainThm}(a) and Lemma~\ref{Pro-trans}, the right-hand side of this equation is $C^{1,\alpha}$ in $\Omega_{g_D}^+$. Therefore we have $\psi\in C^{3,\alpha}(\Omega_{g_D}^+)$. Next, we regard the first equation in \eqref{3D-H} as a second order quasilinear equation for $\varphi$. By Theorem~\ref{3D-MainThm}(a), this equation is uniformly elliptic. Since $\varphi$ is $C^{2,\alpha}$ in $\Omega_{g_D}^{+}$, and $\psi\in C^{3,\alpha}(\Omega_{g_D}^+)$, we obtain that $\varphi$ is $C^{3,\alpha}$ in $\Omega_{g_D}^+$. 
This implies that $\mathfrak{h}\in C^{3,\alpha}(\Omega_{g_D}^+)$, thus the equation \eqref{3D-St-Diff} is well-defined.

By the boundary conditions \eqref{Prob2-BC-ent}, $\omega$ satisfies
\begin{equation}\label{omega-BC-0}
\omega=-r\rho u_r^\mathrm{en}\quad\mbox{on}\quad\Gamma_\mathrm{en}^{g_D},\quad \omega=-R'(x_1)\partial_r\mathfrak{h}\quad\mbox{on}\quad\Gamma_\mathrm{w}^+.
\end{equation}

Next, we compute a conormal boundary condition for \eqref{3D-St-Diff} on $\Gamma_\mathrm{cd}^{g_D}$.

We consider the expression
\begin{equation}\label{3D-fxr2}
(\partial_{x_1}\mathfrak{h})^2+(\partial_r\mathfrak{h})^2=\mathcal{C}_1(g_D(x_1))^2-\mathcal{C}_2\quad\mbox{on}\quad\Gamma_\mathrm{cd}^{g_D}=\partial\Omega_{g_D}^+\cap\{r=g_D(x_1)\}
\end{equation}
for
\begin{equation}\label{def-C12}
\mathcal{C}_1:=\frac{(\partial_{x_1}\mathfrak{h})^2+(\partial_r\mathfrak{h})^2+\Lambda^2\rho^2}{r^2}(x_1,g_D(x_1)),\quad\mathcal{C}_2:=\Lambda^2 \rho^2(x_1,g_D(x_1)).\end{equation}
Since we have
\begin{equation}\label{S-cont}
S=S_\mathrm{en}\left(\frac 12\right),\quad B=B_0^+\left(\frac 12\right),\quad\Lambda=\Lambda_\mathrm{en}\left(\frac 12\right),\quad p=p_0^-\quad\mbox{on}\quad\Gamma_\mathrm{cd}^{g_D},
\end{equation}
we obtain
\begin{equation}\label{rho-cont}
\rho=\left(\frac{p_0^-}{S_\mathrm{en}(\frac 12)}\right)^{1/\gamma},
\end{equation}
from which it follows that $\mathcal{C}_2$ in \eqref{def-C12} is given by
\begin{equation*}
\mathcal{C}_2=\Lambda_\mathrm{en}^2(\frac 12)(p_0^-)^{2/\gamma}S_\mathrm{en}^{-2/\gamma}(\frac 12).
\end{equation*}
A direct computation using \eqref{Ber-inv}, \eqref{3D-st-f}, and \eqref{S-cont}--\eqref{rho-cont} yields that
\begin{equation*}
\begin{aligned}
\mathcal{C}_1
&=2\left(B_0^+(\frac{1}{2})-\frac{\gamma}{\gamma-1}(p_0^-)^{1-1/\gamma}S_\mathrm{en}^{1/\gamma}(\frac 12)\right)(p_0^-)^{2/\gamma}S_\mathrm{en}^{-2/\gamma}(\frac 12).
\end{aligned}
\end{equation*}
Differentiating \eqref{3D-fxr2} in the tangential direction along $\Gamma_\mathrm{cd}^{g_D}$,
we have
\begin{equation*}
(\partial_{x_1}\mathfrak{h})\left(\partial_{x_1 x_1}\mathfrak{h}+g_D'(x_1)\partial_{x_1 r}\mathfrak{h}\right)+(\partial_r\mathfrak{h})\left(\partial_{r{x_1}}\mathfrak{h}+g_D'(x_1)\partial_{rr}\mathfrak{h}\right)=\mathcal{C}_1g_D(x_1)g_D'(x_1)\quad\mbox{on}\quad\Gamma_\mathrm{cd}^{g_D}.
\end{equation*}
We solve this expression for $\partial_{x_1 r}\mathfrak{h}$ to get
\begin{equation}\label{3D-FXR}
\partial_{x_1 r}\mathfrak{h}
=-\left(\frac{\mathcal{C}_1g_D(x_1)}{\partial_r\mathfrak{h}}+\partial_{x_1 x_1}\mathfrak{h}-\partial_{rr}\mathfrak{h}\right)\frac{\omega}{(\partial_{x_1}\mathfrak{h})g_D'(x_1)+(\partial_r\mathfrak{h})}\quad\mbox{on}\quad\Gamma_\mathrm{cd}^{g_D}.
\end{equation}
Substituting the expression for $\mathcal{C}_1$ in \eqref{def-C12} into \eqref{3D-FXR}, we have
\begin{equation}\label{q22term}
\partial_r\omega=\partial_{x_1 r}\mathfrak{h}
=\left(\frac{-\partial_{x_1 x_1}\mathfrak{h}+g_D(x_1)\mathfrak{E}}{(\partial_{x_1}\mathfrak{h})g_D'(x_1)+(\partial_r\mathfrak{h})}\right)\omega\quad\mbox{on}\quad\Gamma_\mathrm{cd}^{g_D}
\end{equation}
for
\begin{equation*}
\mathfrak{E}:=\frac{1}{\partial_r\mathfrak{h}}\left\{-\left(\frac{\partial_{x_1}\mathfrak{h}}{r}\right)^2-\left(\frac{\Lambda\rho}{r}\right)^2\right\}+\partial_r\left(\frac{\partial_r\mathfrak{h}}{r}\right).
\end{equation*}
By the definition of $\mathfrak{q}_{21}$ in \eqref{def-q12}, we also have
\begin{equation}\label{q21term}
\mathfrak{q}_{21}=\frac{(\partial_{x_1}\mathfrak{h})(\partial_r\mathfrak{h})}{\mathfrak{O}}=\frac{(\partial_r\mathfrak{h})\omega}{\mathfrak{O}}.
\end{equation}
Finally, a direct computation using \eqref{q22term}--\eqref{q21term} yields the following conormal boundary condition for \eqref{3D-St-Diff} on $\Gamma_\mathrm{cd}^{g_D}$:
\begin{equation}\label{3D-omega-BC}
\left(\frac{\mathfrak{q}_{1j}}{r\rho^2}\partial_j\omega,\frac{\mathfrak{q}_{2j}}{r\rho^2}\partial_j\omega\right)\cdot\mathbf{n}_{g_D}=\widetilde{\mu}\omega\quad\mbox{on}\quad\Gamma_\mathrm{cd}^{g_D}
\end{equation}
for $\widetilde{\mu}$ defined by
\begin{equation*}\label{3D-omega-GammaD}
\begin{aligned}
\widetilde{\mu}:=&\frac{\mathfrak{q}_{11}\partial_{x_1 x_1}\mathfrak{h}+\mathfrak{q}_{12}\partial_{x_1 r}\mathfrak{h}}{r\rho^2(\partial_r\mathfrak{h})\sqrt{1+|g_D'|^2}}+\frac{1}{r\rho^2\sqrt{1+|g_D'|^2}}\left(\frac{(\partial_r\mathfrak{h})(\partial_{x_1 x_1}\mathfrak{h})}{\mathfrak{O}}\right)\\
&+\frac{\mathfrak{q}_{22}}{r\rho^2\sqrt{1+|g_D'|^2}}
\left(\frac{-\partial_{x_1 x_1}\mathfrak{h}+g_D\mathfrak{E}}{(\partial_{x_1}\mathfrak{h})g_D'+(\partial_r\mathfrak{h})}\right),
\end{aligned}
\end{equation*}
where we represent $\mathbf{n}_{g_D}$ as
\begin{equation*}
\mathbf{n}_{g_D}=\frac{1}{\sqrt{1+|g_D'(x_1)|^2}}\left(\frac{\omega}{\partial_r\mathfrak{h}},1\right).
\end{equation*}

Fix a constant $L>L_0+1$ and let $\eta$ be a $C^{\infty}$ function satisfying
$$\eta=1\quad\mbox{for}\quad L_0+1<x_1<L,\quad \eta=0\quad\mbox{for}\quad x_1\notin[L_0,L+1],\quad\mbox{and}\quad|\eta'(x_1)|\le 2.$$
Multiply \eqref{3D-St-Diff} by $\eta^2\omega$ and integrate over the domain $\Omega_{g_D}^+$ to get
\begin{equation}\label{3D-sum-omega}
\iint_{\Omega_{g_D}^+}\frac{\eta^2|\nabla\omega|^2}{r\rho} drdx_1=\sum_{i=1}^6 I_i+\sum_{i=1}^2 B_i
\end{equation}
for
\begin{equation*}
\begin{aligned}
&I_1:=-\iint_{\Omega_{g_D}^+}\frac{|\nabla\mathfrak{h}\cdot\nabla\omega|^2\eta^2}{r\rho^2\mathfrak{O}}drdx_1,\\
&I_2:=-2\iint_{\Omega_{g_D}^+}\left(\frac{\mathfrak{q}_{ij}}{r\rho^2}\partial_j\omega\right)\eta(\partial_i\eta)\omega drdx_1,\\
&I_3:=-2\iint_{\Omega_{g_D}^+}\frac{1}{\mathfrak{O}}\left(r\mathfrak{S}'\rho^{\gamma-1}+\frac{\Lambda\Lambda'}{r}-rB'\right)(\nabla\mathfrak{h}\cdot\nabla\eta)\eta\omega^2drdx_1,\\
&I_4:=-2\iint_{\Omega_{g_D}^+}\frac{1}{\mathfrak{O}}\left(r\mathfrak{S}'\rho^{\gamma-1}+\frac{\Lambda\Lambda'}{r}-rB'\right)(\partial_i\mathfrak{h})(\partial_i\omega)\eta^2\omega drdx_1,\\
&I_5:=\iint_{\Omega_{g_D}^+}\left(\frac{r}{\gamma}\mathfrak{S}''\rho^{\gamma}+\frac{(\Lambda')^2\rho}{r}+\frac{\Lambda\Lambda''\rho}{r}-rB''\rho\right)\eta^2\omega^2 drdx_1,\\
&I_6:=-\iint_{\Omega_{g_D}^+}\frac{(r^2\mathfrak{S}'\rho^{\gamma+1}+\Lambda\Lambda'\rho^2-r^2B'\rho^2)^2}{r\rho^2\mathfrak{O}}\eta^2\omega^2 drdx_1,\\
&B_1:=\int_{\Gamma_\mathrm{w}^{g_D}\cup\Gamma_\mathrm{cd}^{g_D}\cup\Gamma_\mathrm{en}^{g_D}}\left(\frac{\mathfrak{q}_{ij}}{r\rho^2}\partial_j\omega\right)\eta^2\omega\cdot\mathbf{n}_\mathrm{out}ds,\\
&B_2:=\int_{\Gamma_\mathrm{w}^{g_D}\cup\Gamma_\mathrm{cd}^{g_D}\cup\Gamma_\mathrm{en}^{g_D}}\frac{\partial_i\mathfrak{h}}{\mathfrak{O}}\left(r\mathfrak{S}'\rho^{\gamma-1}+\frac{\Lambda\Lambda'}{r}-rB'\right)\eta^2\omega^2 \cdot\mathbf{n}_\mathrm{out}ds.
\end{aligned}
\end{equation*}
We will show that
\begin{equation}\label{3D-omega-claim}
\left\{\begin{aligned}
&I_1+I_4+I_6\le 0,\\
&|I_2|\le C\left(\int_{L_0}^{L_0+1}\int_{g_D(x_1)}^{R(x_1)}\left(1+\frac{1}{r^2}\right)|\nabla\omega|^2drdx_1+\int_L^{L+1}\int_{g_D(x_1)}^{R(x_1)}\left(1+\frac{1}{r^2}\right)|\nabla\omega|^2drdx_1\right),\\
&|I_3|\le C\sigma\left(\int_{L_0}^{L_0+1}\int_{g_D(x_1)}^{R(x_1)}\frac{|\nabla\omega|^2}{r}drdx_1+\int_L^{L+1}\int_{g_D(x_1)}^{R(x_1)}\frac{|\nabla\omega|^2}{r}drdx_1\right),\\
&|I_5|\le C(\beta_0^2+\sigma)\int_{L_0}^{L+1}\int_{g_D(x_1)}^{R(x_1)}\frac{|\nabla\omega|^2}{r}drdx_1,\\
&|B_1|\le C(\beta_0^2+\sigma)\int_{L_0}^{L+1}\int_{g_D(x_1)}^{R(x_1)}{\frac{|\nabla\omega|^2}{r}}drdx_1,\\
&|B_2|\le C(\beta_0^2+\sigma)\int_{L_0}^{L+1}\int_{g_D(x_1)}^{R(x_1)}{|\nabla\omega|^2}drdx_1,
\end{aligned}\right.
\end{equation}
where $C>0$ is a constant depending only on the data.
From now on, the constant $C$ depends only on the data, which may vary from line to line.

First, by H\"older's inequality, we have
\begin{equation*}
\begin{aligned}
I_4&\le 2\left(\iint_{\Omega_{g_D}^+}\frac{|\nabla\mathfrak{h}\cdot\nabla\omega|^2\eta^2}{r\rho^2\mathfrak{O}}drdx_1\right)^{1/2}
\left(\iint_{\Omega_{g_D}^+}\frac{\left(r^2\mathfrak{S}'\rho^{\gamma+1}+{\Lambda\Lambda'\rho^2}-r^2B'\rho^2\right)^2}{r\rho^2\mathfrak{O}}\eta^2\omega^2drdx_1\right)^{1/2}\\
&=2\sqrt{|I_1||I_6|},
\end{aligned}
\end{equation*}
from which we obtain that
$$I_1+I_4+I_6\le -|I_1|+2\sqrt{|I_1||I_6|}-|I_6|\le 0.$$

Before we prove the remaining estimates in \eqref{3D-omega-claim}, we compute estimates for $(\rho, \mathfrak{O}, \mathfrak{S}',\mathfrak{S}'',B',B'',\Lambda',\Lambda'')$.
A straightforward computation using the estimate \eqref{Thm2.1-uniq-est} given in Theorem~\ref{3D-MainThm}(a) shows that there exists a constant $\sigma_{\star}\in(0,\sigma_1]$ depending only on the data such that if $\sigma\le \sigma_{\star}$, then we have
\begin{equation}\label{3D-far-Lem}
|\rho-\rho_0^+|\le \frac{\rho_0^+}{2}\quad\mbox{and}\quad|\mathfrak{V}_0-\mathfrak{V}|\le \frac{\mathfrak{V}_0}{2}\quad\mbox{in}\quad\overline{\Omega_{g_D}^+}
\end{equation}
for
\begin{equation*}
\mathfrak{V}_0:=c_0^2-u_0^2=\frac{\gamma p_0^+}{\rho_0^+}-u_0^2,\quad \mathfrak{V}:=c^2-(\mathbf{u}\cdot\mathbf{e}_{x_1})^2-(\mathbf{u}\cdot\mathbf{e}_r)^2.
\end{equation*}
By \eqref{3D-far-Lem}, it holds that
\begin{equation}\label{3D-O-lower}
\begin{aligned}
\mathfrak{O}&=r^2(\gamma+1)\mathfrak{S}\rho^{\gamma}-2r^2B\rho+\Lambda^2\rho
={r^2}{\rho}\left(\frac{\gamma p}{\rho}-|\mathbf{u}|^2\right)+\Lambda^2\rho\\
&=r^2\rho\left(c^2-|\mathbf{u}|^2+\left(\frac{\Lambda}{r}\right)^2\right)
=r^2\rho\mathfrak{V}
\ge \frac{r^2\rho_0^+ \mathfrak{V}_0}{4}.
\end{aligned}
\end{equation}
Using the equations in \eqref{3D-ang} and the definition of $\mathfrak{h}$, it can be checked that
\begin{equation}\label{rg-rel}
\int_{\mathcal{G}^{-1}(\mathfrak{h}(x_1,r))}^1s\rho u_{x_1}(0,s)ds
=\int_r^{R(x_1)} s\rho u_{x_1}(x_1,s)ds \quad\text{in $\Omega_{g_D}^+$},
\end{equation}
where $\mathcal{G}$ is given in \eqref{S-Lambda}.
One can also check that there exists a constant $\sigma_{\star\star}\in(0,\sigma_{\star}]$ depending only on the data such that if $\sigma\le\sigma_{\star\star}$, then
\begin{equation}\label{rhou-G}
|\rho u_{x_1}-\rho_0^+u_0|\le \frac{\rho_0^+u_0}{2},
\end{equation}
and it follows from \eqref{rg-rel}--\eqref{rhou-G} that
\begin{equation*}\label{G-inv-est}
0< \frac{1}{\sqrt{3}}\le \frac{\mathcal{G}^{-1}(\mathfrak{h}(x_1,r))}{r}\le \sqrt{3}\quad \text{in $\Omega_{g_D}^+$},
\end{equation*}
then we get
\begin{equation}\label{S-Lam-est}
\begin{aligned}
|\mathfrak{S}'(\mathfrak{h})|+|\mathfrak{S}''(\mathfrak{h})|+|B'(\mathfrak{h})|+|B''(\mathfrak{h})|&\le C(\beta_0^2+\sigma),\\|\Lambda'(\mathfrak{h})|+|\Lambda''(\mathfrak{h})|&\le C(|\beta_0|+\sigma)
\end{aligned}
\end{equation}
in $\Omega_{g_D}^+$.
We are now ready to estimate $I_2$.
Since $\frac{\omega(\partial_j\omega)}{r}\le C\left(\omega^2+\frac{|\nabla\omega|^2}{r^2}\right)$ and $\rho\ge\frac{\rho_0^+}{2}$ in $\Omega_{g_D}^+$, we have
\begin{equation}\label{3D-I2-est}
|I_2|\le C\left(\int_{L_0}^{L_0+1}\int_{g_D(x_1)}^{R(x_1)}\left(\omega^2+\frac{|\nabla\omega|^2}{r^2}\right)drdx_1+\int_L^{L+1}\int_{g_D(x_1)}^{R(x_1)}\left(\omega^2+\frac{|\nabla\omega|^2}{r^2}\right)drdx_1\right).
\end{equation}
By the boundary condition $\omega\equiv 0$ on $\{(x_1,r):x_1>L_0,r=R(x_1)\}$ stated in \eqref{omega-BC-0}, we have
\begin{equation*}\label{3D-omega-Poin}
\omega(x_1,t)=\int_{R(x_1)}^{t}\partial_r\omega(x_1,r)dr\quad\mbox{for}\quad (x_1,t)\in\overline{\Omega_{g_D}^+\cap\{x_1\ge L_0\}}.
\end{equation*}
By H\"older's inequality, we have the following estimates:
\begin{equation}\label{3D-omega22}
\left\{\begin{aligned}
&\omega^2(x_1,t)\le C (t^2-R(x_1)^2)\int^{t}_{R(x_1)}\frac{(\partial_r\omega)^2(x_1,r)}{r}dr,\\
&\omega^2(x_1,t)\le (t-R(x_1))\int^t_{R(x_1)}(\partial_r\omega)^2(x_1,r) dr\le R(x_1)\int_{g_D(x_1)}^{R(x_1)}|\nabla\omega|^2 dr\quad\mbox{for }(x_1,t)\in\overline{\Omega_{g_D}^+\cap\{x_1\ge L_0\}}.
\end{aligned}\right.
\end{equation}
Substituting the second estimate of \eqref{3D-omega22} into \eqref{3D-I2-est} yields
$$|I_2|\le C\left(\int_{L_0}^{L_0+1}\int_{g_D(x_1)}^{R(x_1)}\left(1+\frac{1}{r^2}\right)|\nabla\omega|^2drdx_1+\int_L^{L+1}\int_{g_D(x_1)}^{R(x_1)}\left(1+\frac{1}{r^2}\right)|\nabla\omega|^2drdx_1\right).$$

It follows from \eqref{3D-far-Lem}--\eqref{S-Lam-est} that
\begin{align}
\label{I3-est}|I_3|&\le C\sigma\left(\int_{L_0}^{L_0+1}\int_{g_D(x_1)}^{R(x_1)}\frac{\omega^2}{r^2}drdx_1+\int_L^{L+1}\int_{g_D(x_1)}^{R(x_1)}\frac{\omega^2}{r^2}drdx_1\right),\\
\label{B2-est}|B_2|&\le C(\beta_0^2+\sigma)\int_{L_0}^{L+1}\omega^2(x_1,g_D(x_1))dx_1.
\end{align}
Substituting the first estimate of \eqref{3D-omega22} into \eqref{I3-est} gives
\begin{equation*}
|I_3|\le C\sigma\left(\int_{L_0}^{L_0+1}\int_{g_D(x_1)}^{R(x_1)}\frac{|\nabla\omega|^2}{r^2}drdx_1+\int_L^{L+1}\int_{g_D(x_1)}^{R(x_1)}\frac{|\nabla\omega|^2}{r^2}drdx_1\right).
\end{equation*}
Similarly, substituting the second estimate of \eqref{3D-omega22} into \eqref{B2-est} gives
\begin{equation*}
|B_2|\le C(\beta_0^2+\sigma)\int_{L_0}^{L+1}\int_{g_D(x_1)}^{R(x_1)}|\nabla\omega|^2drdx_1.
\end{equation*}
It follows from \eqref{3D-omega-BC} and \eqref{3D-far-Lem}--\eqref{S-Lam-est} that
\begin{equation}\label{3D_B1}
\begin{aligned}
|B_1|\le C\int_{L_0}^{L+1}|\widetilde{\mu}|\omega^2(x_1,g_D(x_1))dx_1
\le C(\beta_0^2+\sigma)\int_{L_0}^{L+1}\omega^2(x_1,g_D(x_1))dx_1.
\end{aligned}
\end{equation}
Substituting the first estimate of \eqref{3D-omega22} into \eqref{3D_B1} gives
\begin{equation*}
|B_1|\le C(\beta_0^2+\sigma)\int_{L_0}^{L+1}\int_{g_D(x_1)}^{R(x_1)}{\frac{|\nabla\omega|^2}{r}}drdx_1.
\end{equation*}
Also, we obtain from \eqref{3D-far-Lem}, \eqref{S-Lam-est}, and the first estimate of \eqref{3D-omega22} that
$$|I_5|\le C(\beta_0^2+\sigma)\int_{L_0}^{L+1}\int_{g_D(x_1)}^{R(x_1)}\frac{\omega^2}{r^2}drdx_1\le C(\beta_0^2+\sigma)\int_{L_0}^{L+1}\int_{g_D(x_1)}^{R(x_1)}\frac{|\nabla\omega|^2}{r}drdx_1.$$

Now the estimates in \eqref{3D-omega-claim} are all verified.

From \eqref{3D-sum-omega}--\eqref{3D-omega-claim}, we have
\begin{equation*}
\begin{aligned}
\int_{L_0+1}^L&\int_{g_D(x_1)}^{R(x_1)}\frac{|\nabla\omega|^2}{r} drdx_1\\
&\le C^{(\sharp)}(\beta_0^2+\sigma)\int_{L_0}^{L+1}\int_{g_D(x_1)}^{R(x_1)}\frac{|\nabla\omega|^2}{r} drdx_1\\&+C\left(\int_L^{L+1}\int_{g_D(x_1)}^{R(x_1)}\left(1+\frac{1}{r^2}\right)|\nabla\omega|^2drdx_1+\int_{L_0}^{L_0+1}\int_{g_D(x_1)}^{R(x_1)}\left(1+\frac{1}{r^2}\right)|\nabla\omega|^2drdx_1\right),
\end{aligned}
\end{equation*}
where the constant $C^{(\sharp)}>0$ depends only on the data.
If it holds that
$$\beta_0^2\le\frac{1}{4C^{(\sharp)}},\quad{\sigma}\le\frac{1}{2C^{(\sharp)}}, $$
then we obtain from the previous estimate that
\begin{equation*}\label{3D-urr}
\begin{aligned}
 \int_{L_0+1}^L\int_{g_D(x_1)}^{R(x_1)}\frac{|\nabla\omega|^2}{r} drdx_1
&\le C\left(\int_L^{L+1}\int_{g_D(x_1)}^{R(x_1)}\left(1+\frac{1}{r^2}\right)|\nabla\omega|^2drdx_1+\int_{L_0}^{L_0+1}\int_{g_D(x_1)}^{R(x_1)}\left(1+\frac{1}{r^2}\right)|\nabla\omega|^2drdx_1\right).
\end{aligned}
\end{equation*}
Since $|\nabla\omega|\le C$ and $\frac{|\nabla\omega|^2}{r^2}\le C$ in $\overline{\Omega_{g_D}^+}$ by \eqref{Thm2.1-uniq-est}, we have
\begin{equation*}
 \int_{L_0+1}^L\int_{g_D(x_1)}^{R(x_1)}\frac{|\nabla\omega|^2}{r} drdx_1\le C.
\end{equation*}
Since $0<g_D(x_1)<R(x_1)<2$, we have
\begin{equation*}
\int_{L_0+1}^L\int_{g_D(x_1)}^{R(x_1)}|\nabla\omega|^2drdx_1\le 2\int_{L_0+1}^L\int_{g_D(x_1)}^{R(x_1)}\frac{|\nabla\omega|^2}{r} drdx_1\le C
\end{equation*}
for some constant $C>0$ independent of $L$.
Passing to the limit $L\rightarrow\infty$ yields
\begin{equation*}
\int_{L_0+1}^\infty\int_{g_D(x_1)}^{R(x_1)}|\nabla\omega|^2drdx_1\le C.
\end{equation*}

Hence
$$\int_L^{L+1}\int_{g_D(x_1)}^{R(x_1)}|\nabla\omega|^2drdx_1\rightarrow0\quad\mbox{as}\quad L\rightarrow\infty.$$
This proves the claim. The proof of Theorem~\ref{3D-MainThm}(b)
is completed by choosing $\bar\beta_2$ and $\sigma_2$ as
$$\bar\beta_2:=\min\left\{\bar\beta,\,\sqrt{\frac{1}{4C^{(\sharp)}}}\right\},\quad\sigma_2:=\min\left\{\sigma_1,\,\sigma_{\star\star},\,\frac{1}{4C^{(\sharp)}}\right\}.$$
\end{proof}

\vspace{.25in}
\noindent
\textbf{Acknowledgments.}
The research of Myoungjean Bae and Jong-Seo Yoon was supported in part by the Ministry of Education of the Republic of Korea and the National Research Foundation of Korea (NRF-RS-2025-00553734).

\end{document}